\numberwithin{equation}{section}
\theoremstyle{plain}
\newtheorem{thm}{Theorem}[section]
\newtheorem{prop}[thm]{Proposition}
\newtheorem{cor}[thm]{Corollary}
\newtheorem{lemma}[thm]{Lemma}
\newtheorem{defi}[thm]{Definition}
\newtheorem{rem}[thm]{Remark}
\newtheorem{ex}[thm]{Example}
\newcommand{\Rep}{\mbox{Rep}}
\newcommand{\HS}{{\mathtt{HS}}}
\newcommand{\Tr}{\mbox{\emph{Tr}}}
\newcommand{\N}{{\mathbb{N}}}
\newcommand{\R}{\mathbb{R}}
\newcommand{\Z}{\mathbb{Z}}
\newcommand{\C}{\mathbb{C}}
\newcommand{\St}{{\mathbb S}^3}
\newcommand{\DG}{\mathcal{D}'(G)}
\newcommand{\jp}[1]{{\left\langle{#1}\right\rangle}}
\newlength{\dhatheight}
\newcommand{\doublehat}[1]{%
	\settoheight{\dhatheight}{\ensuremath{\hat{#1}}}% if you want, change \hat by \widehat
	\addtolength{\dhatheight}{-0.15ex}% distance between hats
	\widehat{\vphantom{\rule{5pt}{\dhatheight}}%
		\smash{\widehat{#1}}}}
\begin{document}

%-------------------------------------------------------------------------
% editorial commands: to be inserted by the editorial office
%
%\firstpage{1} \volume{228} \Copyrightyear{2004} \DOI{003-0001}
%
%
%\seriesextra{Just an add-on}
%\seriesextraline{This is the Concrete Title of this Book\br H.E. R and S.T.C. W, Eds.}
%
% for journals:
%
%\firstpage{1}
%\issuenumber{1}
%\Volumeandyear{1 (2004)}
%\Copyrightyear{2004}
%\DOI{003-xxxx-y}
%\Signet
%\commby{inhouse}
%\submitted{March 14, 2003}
%\received{March 16, 2000}
%\revised{June 1, 2000}
%\accepted{July 22, 2000}
%
%
%
%---------------------------------------------------------------------------
%Insert here the title, affiliations and abstract:
%

\title[Global Properties in Komatsu classes]{Global Properties of Vector Fields \\ on Compact Lie Groups in Komatsu classes}

%----------Author 1

\author[Alexandre Kirilov]{Alexandre Kirilov}
\address{
	Universidade Federal do Paran\'{a}, 
	Departamento de Matem\'{a}tica,
	C.P.19096, CEP 81531-990, Curitiba, Brazil
}
\email{akirilov@ufpr.br}

%----------Author 2

\author[Wagner de Moraes]{Wagner A. A. de Moraes}
\address{
	Universidade Federal do Paran\'{a},
	Programa de P\'os-Gradua\c c\~ao de Matem\'{a}tica,
	C.P.19096, CEP 81531-990, Curitiba, Brazil
}
\email{wagneramat@gmail.com}

%---------Author 3

\author[Michael Ruzhansky]{Michael Ruzhansky}
\address{Ghent University, 
	Department of Mathematics: Analysis, Logic and Discrete Mathematics, 
	Ghent, Belgium 
	and 
	Queen Mary University of London, 
	School of Mathematical Sciences, 
	London, United Kingdom
}
\email{Michael.Ruzhansky@ugent.be}
%----------classification, keywords, date

\subjclass[2010]{Primary 35R03, 46F05; Secondary 22E30, 35H10}

\keywords{Compact Lie groups, Global hypoellipticity, Global solvability, Komatsu classes, Vector fields, Low-order perturbations}
%\date{\today}

%----------additions
%\dedicatory{To my boss}
%%% ----------------------------------------------------------------------

\begin{abstract}
	In this paper we characterize completely the global hypoellipticity and global solvability in the sense of Komatsu (of Roumieu and Beurling types) of constant-coefficients vector fields on compact Lie groups. We also analyze the influence of perturbations by lower order terms in the preservation of these properties. 
\end{abstract}

%%% ----------------------------------------------------------------------
\maketitle
%%% ----------------------------------------------------------------------
\tableofcontents

%======================================================================
\section{Introduction}
%======================================================================

The characterization of global properties for vector fields on closed smooth manifolds has mobilized the efforts of many mathematicians in recent decades. The first step in this direction was taken by Greenfield and Wallach who obtained necessary and sufficient conditions for the global hypoellipticity of constant vector fields on tori. Moreover, these authors conjectured that the existence of a globally hypoelliptic vector field $X$ on a closed smooth manifold $M$ would be equivalent to saying that $M$ is diffeomorphic to a torus and that $X$ is $C^{\infty}-$conjugated to a Diophantine vector field, see  \cite{GW72,GW73b,F08}.

The global hypoellipticity and global solvability on tori, for different classes of vector fields with variable coefficients, has come a long way since then. Among the most inspiring references to this study we mention \cite{BCP04,BDGK15,BDG17,Ber94,GPY92,HouZug17,Hou79,Hou82,Pet11,Petr06,PetrZan08}. 

More recently, we started the study of such properties on compact Lie Groups and we obtained necessary and sufficient conditions to have global hypoellipticity and global solvability for vector fields and their perturbations by low-order terms on a product of two Lie groups, see \cite{KMR19,KMR19b}. 

One of our main results states that a constant coefficient vector field  $L=X_1+a X_2$, defined on $\mathcal{D}'(G_1\times G_2)$ is globally hypoelliptic if and only if its matrix symbol is singular for only finitely many entries and its inverse has at most a polynomial growth with respect to the eigenvalues of the Laplacian on the group.

Since many results about global properties on the torus have a version in Gevrey classes, see for example \cite{AKM19,AlbJor14,Ara18,BDG18,BarFerPet17,GPY93}, we decided to study the ultra-differentiable version of our results and, in view of \cite{DR14,DR16,DR18}, we were naturally led to investigating the Komatsu versions of such properties, obtaining more general results.

To this end we have organized this paper as follows: In Section 2 we present some classical results about Fourier analysis on compact Lie groups and fix the notation that will be used throughout the text. We also present briefly the Komatsu classes of Roumieu and Beurling type. In Section 3 we state the results that give us necessary and sufficient conditions for the global hypoellipticity and the global solvability in Komatsu sense of constant-coefficient vector fields defined on compact Lie group. In Section 4 and 5 we prove the results stated in Section 3. Finally, in Section 6 we study perturbations of vector fields by low order terms, both by constants and by functions.

\raggedbottom
	\section{Preliminaries}
	
	In this section we introduce most of the notations and preliminary results necessary for the development of this study. A very careful presentation of these concepts and the demonstration of all the results presented here can be found in the references \cite{FR16}  and \cite{livropseudo}.
	
	Let $G$ be a compact Lie group and let $\Rep(G)$  be the set of continuous irreducible unitary representations of $G$. Since $G$ is compact, every continuous irreducible unitary representation $\phi$ is finite dimensional and it can be viewed as a matrix-valued function $\phi: G \to \C^{d_\phi\times d_\phi}$, where $d_\phi = \dim \phi$. We say that $\phi \sim \psi$ if there exists an unitary matrix $A\in C^{d_\phi \times d_\phi}$ such that $A\phi(x) =\psi(x)A$, for all $x\in G$. We will denote by $\widehat{G}$ the quotient of $\Rep(G)$ by this equivalence relation.
	
	For $f \in L^1(G)$ the group Fourier transform of $f$ at $\phi \in \Rep(G)$ is
	\begin{equation*}
	\widehat{f}(\phi)=\int_G f(x) \phi(x)^* \, dx,
	\end{equation*}
	where $dx$ is the normalized Haar measure on $G$.
	By the Peter-Weyl theorem, we have that 
	\begin{equation*}\label{ortho}
	\mathcal{B} := \left\{\sqrt{\dim \phi} \, \phi_{ij} \,; \ \phi=(\phi_{ij})_{i,j=1}^{d_\phi}, [\phi] \in \widehat{G} \right\}
	\end{equation*}
	is an orthonormal basis for $L^2(G)$, where we pick only one matrix unitary representation in each class of equivalence, and we may write
	\begin{equation*}
	f(x)=\sum_{[\phi]\in \widehat{G}}d_\phi \emph{\Tr}(\phi(x)\widehat{f}(\phi)).
	\end{equation*}
	Moreover, the Plancherel formula holds:
	\begin{equation*}
	\label{plancherel} \|f\|_{L^{2}(G)}=\left(\sum_{[\phi] \in \widehat{G}}  d_\phi \ 
	\|\widehat{f}(\phi)\|_{\HS}^{2}\right)^{\tfrac{1}{2}}=:
	\|\widehat{f}\|_{\ell^{2}(\widehat{G})},
	\end{equation*}
	where 
	\begin{equation*} \|\widehat{f}(\phi)\|_{\HS}^{2}=\emph{\Tr}(\widehat{f}(\phi)\widehat{f}(\phi)^{*})=\sum_{i,j=1}^{d_\phi}  \bigr|\widehat{f}(\phi)_{ij}\bigr|^2.
	\end{equation*}
	
		The group Fourier transform of $u\in \DG$ at a matrix unitary representation $\phi$ is the matrix $\widehat{u}(\phi) \in \C^{d_\phi \times d_\phi}$, whose components are given by
	$$
	\widehat{u}(\phi)_{ij} = \jp{u,\overline{\phi_{ji}}}.
	$$
	
	Let $\mathcal{L}_G$ be the Laplace-Beltrami operator of $G$. For each $[\phi] \in \widehat{G}$, its matrix elements are eigenfunctions of $\mathcal{L}_G$ correspondent to the same eigenvalue that we will denote by $-\nu_{[\phi]}$, where $\nu_{[\phi]} \geq 0$. Thus
	$$
	-\mathcal{L}_G \phi_{ij}(x) = \nu_{[\phi]}\phi_{ij}(x), \quad \textrm{for all } 1 \leq i,j \leq d_\phi,
	$$
	and we will denote by
	$$
	\jp \phi := \left(1+\nu_{[\phi]}\right)^{1/2}
	$$
	the eigenvalues of $(I-\mathcal{L}_G)^{1/2}.$ We have the following estimate for the dimension of $\phi$: there exists $C>0$ such that for all $[\xi] \in \widehat{G}$ we have
	\begin{equation*}\label{dimension}
	d_\phi \leq C \jp{\phi}^{\frac{\dim G}{2}},
	\end{equation*}
	(see Proposition 10.3.19 of \cite{livropseudo}).
	For $x\in G$, $X\in \mathfrak{g}$ and $f\in C^\infty(G)$, define 
	$$
	L_Xf(x):=\frac{d}{dt} f(x\exp(tX))\bigg|_{t=0}.
	$$
	
	The operator $L_X$ is left-invariant, that is, $\pi_L(y)L_X = L_X\pi_L(y)$, for all $y \in G$. When there is no possibility of ambiguous meaning, we will write only $Xf$ instead of $L_Xf$.

	Let  $P: C^{\infty}(G) \to C^{\infty}(G)$ be a continuous linear operator. The  symbol of the operator $P$ in $x\in G$ and $\phi \in \mbox{{Rep}}(G)$, $\phi=(\phi_{ij})_{i,j=1}^{d_\phi}$ is
	$$
	\sigma_P(x,\phi) := \phi(x)^*(P\phi)(x) \in \C^{d_\phi \times d_\phi},
	$$
	where $(P\phi)(x)_{ij}:= (P\phi_{ij})(x)$, for all $1\leq i,j \leq d_\phi$, and we have
	$$
	Pf(x) = \sum_{[\phi] \in \widehat{G}} \dim (\phi) \mbox{Tr} \left(\phi(x)\sigma_P(x,\phi)\widehat{f}(\phi)\right)
	$$
	for every $f \in C^\infty(G)$ and $x\in G$.
	
	 When $P: C^\infty(G) \to \C^\infty(G)$ is a continuous linear left-invariant operator, that is $P\pi_L(y)=\pi_L(y)P$, for all $y\in G$, we have that $\sigma_P$ is independent of $x\in G$ and
	$$
	\widehat{Pf}(\phi) = \sigma_P(\phi)\widehat{f}(\phi),
	$$
	for all $f \in C^\infty(G)$ and $[\phi] \in \widehat{G}$.
	
	Let $X\in \mathfrak{g}$. It is easy to see that the operator $iX$ is symmetric on $L^2(G)$. Hence, for all $[\phi] \in \widehat{G}$ we can choose a representative $\phi$ such that $\sigma_{iX}(\phi)$ is a diagonal matrix, with entries $\lambda_m(\phi) \in \R$, $1 \leq m \leq d_\phi$. By the linearity of the symbol, we obtain
	$$
	\sigma_X(\phi)_{mn} = i\lambda_m(\phi) \delta_{mn}, \quad \lambda_j(\phi) \in \R.
	$$
	Notice that $\{\lambda_m(\phi)\}_{m=1}^{d_\phi}$ are the eigenvalues of $\sigma_{iX}(\phi)$ and then are independent of the choice of the representative, since the symbol of equivalent representations are similar matrices. Moreover, since $-(\mathcal{L}_G - X^2)$ is a positive operator and commutes with $X^2$, we have
\begin{equation}\label{symbol}
	|\lambda_m(\phi)| \leq \|X\|\jp{\phi},
\end{equation}
	for all $[\phi] \in \widehat{G}$ and $1 \leq m \leq d_\phi$, where $\|\cdot\|$ is the norm induced by the Killing form (see \cite{livropseudo}).

	Let $G_1$ and $G_2$ be compact Lie groups and set $G=G_1\times G_2$. Given $f \in L^1(G)$ and  $\xi \in {\Rep}(G_1)$, the partial Fourier coefficient of $f$ with respect to the first variable  is defined by 
	$$
	\widehat{f}(\xi, x_2) = \int_{G_1} f(x_1,x_2)\, \xi(x_1)^* \, dx_1 \in \C^{d_\xi \times d_\xi}, \quad x_2 \in G_2,
	$$
	with components
	$$
	\widehat{f}(\xi, x_2)_{mn} = \int_{G_1} f(x_1,x_2)\, \overline{\xi(x_1)_{nm}} \, dx_1, \quad 1 \leq m,n\leq d_\xi.
	$$
	Analogously we define the partial Fourier coefficient of $f$ with respect to the second variable. Notice that, by definition, $\widehat{f}(\xi,\: \cdot \:)_{mn} \in C^\infty(G_2)$ and $\widehat{f}(\: \cdot \:, \eta)_{rs} \in C^\infty(G_1)$. 
	
	Let $u \in \DG$, $\xi \in {\Rep}(G_1)$ and $1\leq m,n \leq d_\xi$. The $mn$-component of  the partial Fourier coefficient of $u$ with respect to the first variable is the linear functional defined by
	$$
	\begin{array}{rccl}
	\widehat{u}(\xi, \: \cdot\: )_{mn}: & C^\infty(G_2) & \longrightarrow & \C \\
	& \psi & \longmapsto & \jp{\widehat{u}(\xi, \: \cdot \:)_{mn},\psi} := \jp{u,\overline{\xi_{nm}}\times\psi}_G.
	\end{array}
	$$
	In a similar way, for $\eta \in {\Rep}(G_2)$ and $1 \leq r,s\leq d_\eta$, we define the $rs$-component of the partial Fourier coefficient of $u$ with respect to the second variable.
	It is easy to see that $ \widehat{u}(\xi, \: \cdot\: )_{mn} \in \mathcal{D}'(G_2)$ and $\widehat{u}( \: \cdot\:,\eta )_{rs} \in \mathcal{D}'(G_1)$. 
	
	Notice that
	\begin{equation*}
	\doublehat{\,u\,}\!(\xi,\eta)_{mn_{rs}} = \doublehat{\,u\,}\!(\xi,\eta)_{rs_{mn}} =  \widehat{u}(\xi \otimes \eta)_{ij},
	\end{equation*}
	with $i = d_\eta(m-1)+ r$ and $j  =  d_\eta(n-1) + s$, whenever $u \in C^\infty(G)$ or $u \in \DG$. The details about the partial Fourier series theory can be found in \cite{KMR19}.
	
	In this manuscript we intend to extend Theorems 3.3 and 3.5 of \cite{KMR19b} to Komatsu classes of both Roumieu and Beurling types. First we introduce these space and most of the notation that will be used in the sequel. All definitions are taken from \cite{DR16}, \cite{Kom73} and \cite{Rou60}.
	
	Let $\{ M_k\}_{k \in\N_0}$ be a sequence of positive numbers such that there exist $H>0$ and $A\geq1$ satisfying
	\begin{description}
		\item[(M.0)] $M_0=1$
		\item[(M.1)] (stability)	$M_{k+1} \leq AH^kM_k, \quad k=0,1,2,\dots\,.$ 
		\item[(M.2)] $M_{2k} \leq AH^{2k}M_k^2, \quad k=0,1,2,\dots\,.$
		\item[(M.3)] $\exists \ell, C>0$ such that $k! \leq C\ell^k M_k,$ for all $k \in \N_0$. 
	\end{description}
	
	We will assume also the logarithmic convexity:
	\begin{description}
		\item[(LC)]
		$
		M_k^2 \leq M_{k-1}M_{k+1}, \quad k=1,2,3,\dots .
		$
	\end{description}
	
	Given any sequence $\{M_k\}$ that satisfies (M.0)--(M.3), there exists an alternative sequence that satisfies the logarithmic convexity and defines the same classes that we will study. So assuming (LC) does not restrict the generality compared to (M.0)--(M.3).
	
	From (M.0) and (LC) we have $M_k \leq M_{k+1}$, for all $k\in \N$, that is, $\{M_k\}$ is a non-decreasing sequence. Moreover, for $k \leq n$ we have
	$$
	M_k \cdot M_{n-k} \leq M_n.
	$$
	
	The condition (M.2) is equivalent to
	$
	M_k \leq AH^k \min\limits_{0\leq q \leq k} M_qM_{k-q}, 
	$ (see \cite{PV84}, Lemma 5.3).
	
	Given a sequence $\{M_k \}$ we define the associated function as
	\begin{equation}\label{associated}
	M(r):= \sup_{k\in \N_0} \log \frac{r^k}{M_k}, \quad r>0,
	\end{equation}
	and $M(0):=0$. Notice that $M$ is a non-decreasing function and by its definition,  for every $r>0$ we have
	\begin{equation*}\label{inf}
	\exp\{M(r)\} = \sup_{k\in \N_0} \frac{r^k}{M_k}
	\quad \mbox{and} \quad
	\exp\{-M(r)\} = \inf_{k\in \N_0} \frac{M_k}{r^k}.
	\end{equation*}

Following from these properties we have that for a compact Lie group $G$, for every $p,q, \delta>0$ there exists $C>0$ such that we have
\begin{equation}\label{propM1}
		\jp{\phi}^p \exp\{-\delta M(q\jp{\phi})\} \leq C,
\end{equation}
		for all $[\phi] \in \widehat{G}$. Moreover, for every $q>0$ we have
		\begin{equation}\label{komine}
		\exp\left\{-\tfrac{1}{2} M\left(q\jp{\phi}\right) \right\} \leq \sqrt{A}\exp\{-M\left(q_2 \jp{\phi}\right) \},
		\end{equation}
		for all $[\phi] \in \widehat{G}$, where $q_2= \dfrac{q}{H}$ (see \cite{DR14}).

	\begin{defi}
		The Komatsu class of Roumieu type $\Gamma_{\{M_k\}}(G)$ is the space of all complex-valued $C^{\infty}$ functions $f$ on $G$ such that there exist $h>0$ and $C>0$ satisfying
		$$
		\| \partial^\alpha f\|_{L^2(G)} \leq Ch^{|\alpha|}M_{|\alpha|},  \quad \forall \alpha \in \N_0^n.
		$$
	\end{defi}

	In the definition above, we could take the $L^\infty$-norm and obtain the same space. The elements of $\Gamma_{\{M_k\}}(G)$ are often called ultradifferentiable functions and can be characterized by their Fourier coefficients as follows:
\begin{align}
\label{roum}f \in \Gamma_{\{M_k\}}(G) \iff \exists N>0, \ \exists C>0; \quad
|\doublehat{\,f\,}\!(\xi,\eta)_{mn_{rs}}  | \leq C \exp\{-M(N(\jp{\xi}+\jp{\eta})) \},\\ \forall [\xi] \in \widehat{G_1}, \ [\eta] \in \widehat{G_2}, \ 1\leq m,n\leq d_\xi, \ 1 \leq r,s\leq d_\eta. \nonumber
\end{align}
Similarly, the ultradistribution of Roumieu type can be characterized in the following way:
\begin{align}
\label{distrou}u \in \Gamma'_{\{M_k\}}(G) \iff \forall N>0,\  \exists C_N>0; \quad
|\doublehat{\,u\,}\!(\xi,\eta)_{mn_{rs}}  | \leq C_N \exp\{M(N(\jp{\xi}+\jp{\eta})) \},\\ \forall [\xi] \in \widehat{G_1}, \ [\eta] \in \widehat{G_2}, \ 1\leq m,n\leq d_\xi, \ 1 \leq r,s\leq d_\eta. \nonumber
\end{align}	 
	Next, to define Komatsu classes of Beurling type, we have to change the condition (M.3) by the following one:
	 \begin{description}
	 	\item[(M.3')] $\forall \ell>0$, $\exists C_\ell$ such that $k! \leq C_\ell \ell^k M_k,$ for all $k \in \N_0$.
	 \end{description}
	 Notice that the condition (M.3') implies the condition (M.3).
	 \begin{defi}
	 	The  Komatsu class of Beurling type $\Gamma_{(M_k)}(G)$ is the space of $C^{\infty}$ functions $f$ on $G$ such that for every $h>0$ there exists $C_h>0$ such that we have
	 	$$
	 	\| \partial^\alpha f\|_{L^2(G)} \leq C_hh^{|\alpha|}M_{|\alpha|}, \quad   \forall \alpha \in \N_0^n.
	 	$$
	 \end{defi}
	 
	 Notice that $\Gamma_{(M_k)}(G) \subset \Gamma_{\{M_k\}}(G)$.
The elements of $\Gamma_{(M_k)}(G)$ can be characterized by their Fourier coefficients as follows:
\begin{align}
\label{caracbeurling}f \in \Gamma_{(M_k)}(G) \iff \forall N>0, \ \exists C_N>0; \quad
|\doublehat{\,f\,}\!(\xi,\eta)_{mn_{rs}} | \leq C_N \exp\{-M(N(\jp{\xi}+\jp{\eta})) \},\\ \forall [\xi] \in \widehat{G_1}, \ [\eta] \in \widehat{G_2}, \ 1\leq m,n\leq d_\xi, \ 1 \leq r,s\leq d_\eta. \nonumber
\end{align}
Similarly, the ultradistribution of Beurling type can be characterized in the following way:
\begin{align}
\label{distbeur}u \in \Gamma'_{(M_k)}(G) \iff \exists N>0,\  \exists C>0; \quad
|\doublehat{\,u\,}\!(\xi,\eta)_{mn_{rs}}  | \leq C \exp\{M(N(\jp{\xi}+\jp{\eta})) \},\\ \forall [\xi] \in \widehat{G_1}, \ [\eta] \in \widehat{G_2}, \ 1\leq m,n\leq d_\xi, \ 1 \leq r,s\leq d_\eta. \nonumber
\end{align}

\section{Results}
Let $G_1$ and $G_2$ be compact Lie groups, $G:=G_1\times G_2$, and consider the linear operator $L:C^\infty(G)\to C^\infty(G)$  defined by
\begin{equation*}
L:=X_1+a X_2,
\end{equation*}
where $X_1 \in \mathfrak{g}_1$, $X_2 \in \mathfrak{g}_2$ and $a \in \C$. Thus, for each $u\in C^\infty(G)$ we have
\begin{align}
Lu(x_1,x_2) &:= X_1u(x_1,x_2) + a X_2u(x_1,x_2) \nonumber \\
&:= \frac{d}{dt} u(x_1\exp(tX_1), x_2) \bigg|_{t=0} + a \frac{d}{ds} u(x_1, x_2\exp(sX_2))\bigg|_{s=0}. \nonumber
\end{align}

For each $[\xi] \in \widehat{G_1}$, we can choose a representative $\xi \in \mbox{Rep}({G_1})$ such that
\begin{equation*}\label{symbol1}
\sigma_{X_1}(\xi)_{mn} =i\lambda_m(\xi) \delta_{mn}, \quad 1 \leq m,n \leq d_\xi,
\end{equation*}
where  $\lambda_m(\xi)\in\R$  for all $[\xi] \in\widehat{G_1}$ and $1 \leq m \leq d_\xi$.
%Remark 10.4.20 of \cite{ruzhansky+turunen-book}
Similarly, for each $[\eta] \in \widehat{G_2}$, we can choose a representative $\eta \in \mbox{Rep}({G_2})$ such that
\begin{equation*}\label{symbol2}
\sigma_{X_2}(\eta)_{rs} =i\mu_r(\eta) \delta_{rs}, \quad 1 \leq r,s \leq d_\eta,
\end{equation*}
where  $\mu_r(\eta)\in\R$  for all $[\eta] \in\widehat{G_2}$ and $1 \leq r \leq d_\eta$.

Suppose that $u \in C^\infty(G)$. Thus, taking the partial Fourier coefficient with respect to the first variable we obtain
\begin{equation*}
\widehat{Lu}(\xi,x_2) = \sigma_{X_1}(\xi)\widehat{u}(\xi,x_2)+a X_2 \widehat{u}(\xi,x_2), \quad x_2 \in G_2. \nonumber 
\end{equation*}
Hence, for each $x_2 \in G_2$, $\widehat{Lu}(\xi,x_2) \in \C^{d_\xi\times d_\xi}$  and 
\begin{equation*}
\widehat{Lu}(\xi,x_2)_{mn} = i\lambda_m(\xi)\widehat{u}(\xi,x_2)_{mn}+a X_2 \widehat{u}(\xi,x_2)_{mn}, \quad 1\leq m,n \leq d_\xi.
\end{equation*}

Now, taking the Fourier coefficient of $\widehat{Lu}(\xi, \cdot)_{mn}$ with respect to the second variable, we obtain
\begin{equation*}
\doublehat{\,Lu\,}\!(\xi,\eta)_{mn}=i\lambda_m(\xi)\doublehat{\, u \,}\!(\xi,\eta)_{mn}+ a \sigma_{X_2}(\eta)\doublehat{\, u \,}\!(\xi,\eta)_{mn}. \nonumber
\end{equation*}
Thus, $\doublehat{\,Lu\,}\!(\xi,\eta)_{mn} \in \C^{d_\eta\times d_\eta}$ and 
\begin{equation}\label{fourierLu}
\doublehat{\,Lu\,}\!(\xi,\eta)_{mn_{rs}}=i(\lambda_m(\xi)+a \mu_r(\eta))\doublehat{\, u\,}\!(\xi,\eta)_{mn_{rs}},\quad 1\leq r,s \leq d_\eta.
\end{equation}

Observe that if $\lambda_m(\xi)+a \mu_r(\eta) \neq 0$, then
\begin{equation}\label{fourieru}
\doublehat{\, u \,}\!(\xi,\eta)_{mn_{rs}} = \dfrac{1}{i(\lambda_m(\xi)+a\mu_{r}(\eta))} \doublehat{\,Lu\,}\!(\xi,\eta)_{mn_{rs}}.
\end{equation}

If  $\lambda_m(\xi)+a \mu_r(\eta) = 0$, then \begin{equation}\label{image0}
\doublehat{\,Lu\,}\!(\xi,\eta)_{mn_{rs}}=0.
\end{equation}

Finally, it is clear that what was done above also holds for $u \in \DG$.

If we restrict the operator $L=X_1 + a X_2$ to the Komatsu class of Roumieu type $\Gamma_{\{M_k\}}(G)$ we obtain an endomorphism, that is, $L: \Gamma_{\{M_k\}}(G) \to \Gamma_{\{M_k\}}(G)$. In this way, we can extend the operator $L$ to $u\in\Gamma'_{\{M_k\}}(G)$ as
$$
\jp{Lu,\varphi}:= -\jp{u,L\varphi}, \quad \forall \varphi \in \Gamma_{\{M_k\}}(G).
$$

	\begin{defi}
	Let $G$ be a compact Lie group. We say that an operator $P:\Gamma'_{\{M_k\}}(G) \to \Gamma'_{\{M_k\}}(G)$ is globally ${\Gamma_{\{M_k\}}}$-hypoelliptic if the conditions $u \in \Gamma'_{\{M_k \}}(G)$ and 
	$Pu \in \Gamma_{\{M_k\}}(G)$ imply that $u \in \Gamma_{\{M_k\}}(G)$.
\end{defi}

Our first result presents necessary and sufficient conditions for the global $\Gamma_{\{M_k\}}$--hypoellipticity of the operator $L$.

\begin{thm}\label{thm4}
	The operator $L=X_1+a X_2$ is globally $\Gamma_{\{M_k\}}-$hypoelliptic if and only the following conditions are satisfied:
	\begin{enumerate}[1.]
		\item The set
		$$
		\mathcal{N}=\{([\xi], [\eta]) \in  \widehat{G_1} \times \widehat{G_2}; \ \lambda_m(\xi)+a \mu_r(\eta) = 0, \ \mbox{for some } 1 \leq m \leq d_\xi, 1 \leq r \leq d_\eta  \}
		$$
		is finite.
		\item $\forall N>0, \exists C_N>0$ such that
		\begin{equation}\label{hypothesis4}
		|\lambda_m(\xi)+a\mu_r(\eta)|\geq C_N \exp \{-M(N (\langle \xi \rangle + \langle \eta \rangle))\}, 
		\end{equation}
		for all  $[\xi] \in \widehat{G_1}, \ [\eta] \in \widehat{G_2},  \ 1 \leq m \leq d_\xi, \ 1 \leq r \leq d_\eta$ whenever $\lambda_m(\xi)+a\mu_r(\eta) \neq 0$.
	\end{enumerate}
\end{thm}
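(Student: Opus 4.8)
The plan is to prove both directions via the Fourier-coefficient characterizations \eqref{roum} and \eqref{distrou}, using the relations \eqref{fourierLu}--\eqref{image0} that reduce the action of $L$ to multiplication by $i(\lambda_m(\xi)+a\mu_r(\eta))$ on each Fourier entry.

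\textbf{Sufficiency.} Suppose conditions 1 and 2 hold. Let $u \in \Gamma'_{\{M_k\}}(G)$ with $Lu \in \Gamma_{\{M_k\}}(G)$; I want $u \in \Gamma_{\{M_k\}}(G)$. First I would split the index set into the pairs $([\xi],[\eta]) \in \mathcal{N}$ and those outside. On $\mathcal{N}$ there are only finitely many pairs (condition 1), each contributing a finite-dimensional block of Fourier entries; these are trivially bounded by any admissible decay, so they cause no trouble. For $([\xi],[\eta]) \notin \mathcal{N}$ and any admissible $m,r$, we have $\lambda_m(\xi)+a\mu_r(\eta) \neq 0$, so \eqref{fourieru} applies. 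Since $u \in \Gamma'_{\{M_k\}}(G)$, for every $N'>0$ there is $C_{N'}$ with $|\doublehat{\,u\,}(\xi,\eta)_{mn_{rs}}| \leq C_{N'}\exp\{M(N'(\jp{\xi}+\jp{\eta}))\}$; since $Lu \in \Gamma_{\{M_k\}}(G)$, there are $N_0, C$ with $|\doublehat{\,Lu\,}(\xi,\eta)_{mn_{rs}}| \leq C\exp\{-M(N_0(\jp{\xi}+\jp{\eta}))\}$. Combining \eqref{fourieru}, the lower bound \eqref{hypothesis4} applied with a parameter $N_1$ to be chosen, and these two estimates gives
\begin{equation*}
|\doublehat{\,u\,}(\xi,\eta)_{mn_{rs}}| \leq \frac{1}{C_{N_1}} \exp\{M(N_1(\jp{\xi}+\jp{\eta}))\} \cdot C \exp\{-M(N_0(\jp{\xi}+\jp{\eta}))\}.
\end{equation*}
I then need to choose $N_1$ small enough (relative to $N_0$) so that the difference of the two associated-function terms is again bounded by $\exp\{-M(N_2(\jp{\xi}+\jp{\eta}))\}$ for some $N_2>0$. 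This uses monotonicity of $M$ together with estimates like \eqref{komine} (and the subadditivity-type consequences of (M.1)--(M.2) for the associated function, e.g. $M(r)+M(s) \le \text{const} + M(\text{const}\cdot rs)$ or the cruder $2M(r) \le M(\text{const}\cdot r)+\text{const}$), so that $M(N_1 t) - M(N_0 t) \le -M(N_2 t) + \text{const}$ for $N_1$ sufficiently small. This is the main technical point and the step I expect to be the chief obstacle: getting the bookkeeping of the associated-function inequalities right so that a genuine negative-exponential decay survives. Applying \eqref{roum} then yields $u \in \Gamma_{\{M_k\}}(G)$.

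\textbf{Necessity.} Conversely, assume $L$ is globally $\Gamma_{\{M_k\}}$-hypoelliptic; I prove conditions 1 and 2 by contraposition, constructing ultradistributions that are not ultradifferentiable but whose image under $L$ is.

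If condition 1 fails, $\mathcal{N}$ is infinite; I would pick an infinite sequence of distinct pairs $([\xi_j],[\eta_j]) \in \mathcal{N}$ with corresponding indices $m_j, r_j$ for which $\lambda_{m_j}(\xi_j)+a\mu_{r_j}(\eta_j)=0$, and define $u$ by setting exactly one Fourier entry $\doublehat{\,u\,}(\xi_j,\eta_j)_{m_j m_j{}_{r_j r_j}} = 1$ for each $j$ (say the diagonal $n=m_j$, $s=r_j$ entry) and all other entries zero. Because $\jp{\xi_j}+\jp{\eta_j}\to\infty$ along the sequence (there are infinitely many distinct classes, and $\jp{\cdot}$ has finite multiplicity), the coefficients $1$ do not decay like any $\exp\{-M(N(\cdot))\}$, so $u \notin \Gamma_{\{M_k\}}(G)$; but $u \in \Gamma'_{\{M_k\}}(G)$ since constants trivially satisfy \eqref{distrou}, and by \eqref{image0} $Lu = 0 \in \Gamma_{\{M_k\}}(G)$, contradicting hypoellipticity. (I should check the entries are placed so that $u$ is a well-defined ultradistribution, i.e. choosing one entry per $[\phi]=[\xi_j\otimes\eta_j]$ block; if several $([\xi],[\eta])$ map to representations with the same $\jp{\cdot}$ this is still fine.)

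If condition 1 holds but condition 2 fails, then there is some $N>0$ such that for every $k$ there is a pair $([\xi_k],[\eta_k])$ and indices $m_k,r_k$ with $0 < |\lambda_{m_k}(\xi_k)+a\mu_{r_k}(\eta_k)| < \tfrac1k \exp\{-M(N(\jp{\xi_k}+\jp{\eta_k}))\}$. Since $\mathcal{N}$ is finite and these pairs are not in $\mathcal{N}$, I may pass to a subsequence with $\jp{\xi_k}+\jp{\eta_k}\to\infty$ and all pairs distinct. Define $u$ by $\doublehat{\,u\,}(\xi_k,\eta_k)_{m_k m_k{}_{r_k r_k}} := \dfrac{1}{i(\lambda_{m_k}(\xi_k)+a\mu_{r_k}(\eta_k))}$, zero otherwise. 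Then by \eqref{fourierLu}, $\doublehat{\,Lu\,}(\xi_k,\eta_k)_{m_k m_k{}_{r_k r_k}} = 1$ and all other entries of $Lu$ vanish; in particular $Lu$ is (a component of) a finite linear combination of matrix coefficients... no—$Lu$ has infinitely many nonzero entries equal to $1$, so I must instead argue $Lu$ need not be smooth. To fix this I would instead weight the construction: set the entry of $u$ to be $\exp\{-M(N(\jp{\xi_k}+\jp{\eta_k}))\}/(i(\lambda_{m_k}(\xi_k)+a\mu_{r_k}(\eta_k)))$, so that the entry of $Lu$ equals $\exp\{-M(N(\jp{\xi_k}+\jp{\eta_k}))\}$, giving $Lu \in \Gamma_{\{M_k\}}(G)$ by \eqref{roum}, while the entry of $u$ has absolute value $> k$, so by \eqref{distrou} one checks $u$ is still an ultradistribution (the bound $k \le C_{N'}\exp\{M(N'(\jp{\xi_k}+\jp{\eta_k}))\}$ holds for any $N'$ by \eqref{propM1}-type growth) but $u \notin \Gamma_{\{M_k\}}(G)$ because its coefficients exceed $k\cdot\exp\{-M(N(\cdot))\}$, which for $N$ fixed grows faster than $\exp\{-M(N'(\cdot))\}$ for suitable $N'$ can be made to fail decay. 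I would verify this last non-membership carefully using monotonicity of $M$, and then the existence of such $u$ contradicts global $\Gamma_{\{M_k\}}$-hypoellipticity, completing the proof. The delicate points throughout are the manipulations of the associated function $M$; everything else is linear algebra on Fourier blocks.
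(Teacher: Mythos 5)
Your sufficiency argument and your treatment of condition 1 follow the paper's route: split off the finitely many blocks indexed by $\mathcal{N}$, invert $i(\lambda_m(\xi)+a\mu_r(\eta))$ elsewhere, and combine \eqref{hypothesis4} with the decay of $\doublehat{\,Lu\,}$. The ``chief obstacle'' you flag there is not really one: taking $N_1=N_0/H$ in \eqref{hypothesis4} and applying \eqref{komine} in the form $\exp\{-M(N_0t)\}\leq A\exp\{-2M(N_1t)\}$ immediately leaves a surviving factor $\exp\{-M(N_1t)\}$, which is exactly what Proposition \ref{voltahyporou} does. Your direct construction for the necessity of condition 1 is also fine (the paper instead cites the $C^\infty$ statement from \cite{KMR19b}, but the idea is identical).

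The genuine gap is in the necessity of condition 2. Having correctly discarded your first attempt, you set the relevant entry of $\doublehat{\,u\,}(\xi_k,\eta_k)$ equal to $\exp\{-M(N(\jp{\xi_k}+\jp{\eta_k}))\}\big/\bigl(i(\lambda_{m_k}(\xi_k)+a\mu_{r_k}(\eta_k))\bigr)$ and assert $u\in\Gamma'_{\{M_k\}}(G)$. But the failure of \eqref{hypothesis4} only gives the \emph{upper} bound $|\lambda_{m_k}(\xi_k)+a\mu_{r_k}(\eta_k)|<\tfrac1k\exp\{-M(N(\jp{\xi_k}+\jp{\eta_k}))\}$, hence only the \emph{lower} bound $|\doublehat{\,u\,}|>k$; there is no control from above on $1/|\lambda_{m_k}(\xi_k)+a\mu_{r_k}(\eta_k)|$ (it could be doubly exponential in $\jp{\xi_k}+\jp{\eta_k}$), so the coefficients you define may violate \eqref{distrou} and $u$ need not be an ultradistribution at all. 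Your parenthetical verification only checks that the number $k$ is compatible with \eqref{distrou}, not that the coefficient itself is. The repair, which is what Proposition \ref{nechrou2} does, is to normalize $u$ rather than $Lu$: set $\doublehat{\,u\,}(\xi_K,\eta_K)_{\tilde m1_{\tilde r1}}=-i(\jp{\xi_K}+\jp{\eta_K})$, i.e.\ define $f$ by $\doublehat{\,f\,}(\xi_K,\eta_K)_{\tilde m1_{\tilde r1}}=(\lambda_{\tilde m}(\xi_K)+a\mu_{\tilde r}(\eta_K))(\jp{\xi_K}+\jp{\eta_K})$. Then $u$ has exactly polynomial growth, so $u\in\mathcal D'(G)\subset\Gamma'_{\{M_k\}}(G)$ while $u\notin C^\infty(G)\supset\Gamma_{\{M_k\}}(G)$, and the smallness of $\lambda_{\tilde m}(\xi_K)+a\mu_{\tilde r}(\eta_K)$ is transferred to $f$, which lands in $\Gamma_{\{M_k\}}(G)$ by \eqref{propM1} and \eqref{komine}.
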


As a consequence of this theorem, it will be proved that any globally hypoelliptic constant coefficient vector field is globally $\Gamma_{\{M_k\}}$--hypoelliptic.

Now, to define global solvability for the operator $L$ in the sense of Komatsu classes, observe that given an ultradifferentiable function (or ultradistribution) $f$ defined on $G$, if $u\in\DG$ is a solution of $Lu=f$, we obtain from \eqref{image0} that
$$
\lambda_m(\xi)+a \mu_r(\eta)=0 \Longrightarrow \doublehat{\,f\,}\!(\xi,\eta)_{mn_{rs}}=0.
$$

Therefore, let us consider the following set 
$$\mathcal{K}:= \{f\in \Gamma'_{\{M_k\}}(G) ;  \doublehat{\,f\,}\!(\xi,\eta)_{mn_{rs}}\!=0 \textrm{ whenever } \lambda_m(\xi)+a \mu_r(\eta)=0, \, \mbox{for all}\, 1 \leq m,n \leq d_\xi, \, 1\leq r,s \leq d_\eta\}.
$$ 
Clearly there are no $u \in \Gamma'_{\{M_k\}}(G)$ satisfying $Lu=f$ when $f \notin \mathcal{K}$.

\begin{defi}
	We say that the operator $L$ is globally $\Gamma'_{\{M_k\}}$--solvable if ${L(\Gamma'_{\{M_k\}}(G))=\mathcal{K}}$.
\end{defi}

Notice that $L(\Gamma'_{\{M_k\}}(G)) \subseteq \mathcal{K}$ and the next result gives us the condition to obtain the other inclusion.

\begin{thm}\label{solvrou}
	The operator $L=X_1+a X_2$ is  globally $\Gamma'_{\{M_k\}}(G)$-solvable if and only if \eqref{hypothesis4} holds, that is, for all $N>0$ there exists $C_N>0$ such that we have
	\begin{equation}\tag{\ref{hypothesis4}}
	|\lambda_m(\xi)+a\mu_r(\eta)|\geq C_N \exp \{-M(N (\langle \xi \rangle + \langle \eta \rangle))\}, 
	\end{equation}
	for all  $[\xi] \in \widehat{G_1}, \ [\eta] \in \widehat{G_2},  1 \leq m \leq d_\xi,  1 \leq r \leq d_\eta$ whenever $\lambda_m(\xi)+a\mu_r(\eta) \neq 0$.
	
	Moreover, if $L$ is globally $\Gamma'_{\{M_k\}}(G)$-solvable, then for any $f \in \mathcal{K}\cap \Gamma_{\{M_k\}}(G)$ there exists $u \in \Gamma_{\{M_k\}}(G)$ such that $Lu=f$. 
\end{thm}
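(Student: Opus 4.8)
The plan is to argue entirely on the Fourier side, using the characterizations \eqref{distrou} and \eqref{roum} of $\Gamma'_{\{M_k\}}(G)$ and $\Gamma_{\{M_k\}}(G)$ together with the identities \eqref{fourierLu}--\eqref{image0}. Beyond the facts that $M$ is non-decreasing, non-negative, and tends to $+\infty$, the only extra ingredient I will need is the doubling estimate $2M(r)\le M(Hr)+\log A$ for all $r>0$, which follows from (M.2) (it is \eqref{komine} after renaming $q_2=q/H$).

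\textbf{Sufficiency.} Assume \eqref{hypothesis4}. Since $L(\Gamma'_{\{M_k\}}(G))\subseteq\mathcal K$ has already been observed, it is enough to solve $Lu=f$ for an arbitrary $f\in\mathcal K$. Following \eqref{fourieru}, I would define $u$ by prescribing the coefficients
$$
\doublehat{\,u\,}\!(\xi,\eta)_{mn_{rs}}:=\frac{1}{i(\lambda_m(\xi)+a\mu_r(\eta))}\,\doublehat{\,f\,}\!(\xi,\eta)_{mn_{rs}}
$$
when $\lambda_m(\xi)+a\mu_r(\eta)\ne 0$, and $\doublehat{\,u\,}\!(\xi,\eta)_{mn_{rs}}:=0$ otherwise. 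To check $u\in\Gamma'_{\{M_k\}}(G)$ via \eqref{distrou}, I would fix $N>0$, set $N'=N/H$, and combine the bound $|\doublehat{\,f\,}\!(\xi,\eta)_{mn_{rs}}|\le C'\exp\{M(N'(\jp\xi+\jp\eta))\}$ coming from $f\in\Gamma'_{\{M_k\}}(G)$ with the bound $|\lambda_m(\xi)+a\mu_r(\eta)|\ge c\exp\{-M(N'(\jp\xi+\jp\eta))\}$ coming from \eqref{hypothesis4}; multiplying and invoking $2M(N'r)\le M(HN'r)+\log A=M(Nr)+\log A$ gives $|\doublehat{\,u\,}\!(\xi,\eta)_{mn_{rs}}|\le (AC'/c)\exp\{M(N(\jp\xi+\jp\eta))\}$ for all $[\xi],[\eta]$ and all indices (trivially so where the multiplier vanishes). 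As $N$ is arbitrary, $u\in\Gamma'_{\{M_k\}}(G)$. Finally $Lu=f$, since by \eqref{fourierLu} the coefficients of $Lu$ are $i(\lambda_m+a\mu_r)\doublehat{\,u\,}$, which equals $\doublehat{\,f\,}$ where the multiplier is nonzero and vanishes together with $\doublehat{\,f\,}$ on $\mathcal N$ because $f\in\mathcal K$. Hence $\mathcal K\subseteq L(\Gamma'_{\{M_k\}}(G))$.

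\textbf{Necessity.} Suppose \eqref{hypothesis4} fails. Then there is $N_0>0$ such that for each $j\in\N$ one can pick a pair $([\xi_j],[\eta_j])$ and indices $m_j,r_j$ with $0<|\lambda_{m_j}(\xi_j)+a\mu_{r_j}(\eta_j)|<\tfrac1j\exp\{-M(N_0(\jp{\xi_j}+\jp{\eta_j}))\}$. Because only finitely many pairs have $\jp\xi+\jp\eta\le R$ for any given $R$, and a tuple $([\xi],[\eta],m,r)$ recurring for infinitely many $j$ would force $\lambda_m(\xi)+a\mu_r(\eta)=0$, I may pass to a subsequence along which the pairs $([\xi_j],[\eta_j])$ are pairwise distinct and $\jp{\xi_j}+\jp{\eta_j}\to+\infty$. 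I would then define $f$ by letting $\doublehat{\,f\,}\!(\xi,\eta)$ vanish for every pair except $([\xi_j],[\eta_j])$, where I put a single $1$ in the slot $\doublehat{\,f\,}\!(\xi_j,\eta_j)_{mn_{rs}}$ with $m=n=m_j$ and $r=s=r_j$. Since $M\ge0$, \eqref{distrou} shows $f\in\Gamma'_{\{M_k\}}(G)$, and $f\in\mathcal K$ because its nonzero coefficients sit only at tuples with nonzero multiplier. If some $u\in\Gamma'_{\{M_k\}}(G)$ satisfied $Lu=f$, then \eqref{fourieru} would force that same slot of $\doublehat{\,u\,}\!(\xi_j,\eta_j)$ to equal $(i(\lambda_{m_j}(\xi_j)+a\mu_{r_j}(\eta_j)))^{-1}$, of modulus $>j\exp\{M(N_0(\jp{\xi_j}+\jp{\eta_j}))\}$, which contradicts the bound given by \eqref{distrou} with $N=N_0$ once $j$ exceeds the corresponding constant. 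Hence $L$ is not globally $\Gamma'_{\{M_k\}}(G)$-solvable, and the equivalence follows.

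\textbf{The ``moreover'' assertion.} Assume \eqref{hypothesis4} and let $f\in\mathcal K\cap\Gamma_{\{M_k\}}(G)$; I would take the same $u$ as in the sufficiency step. By \eqref{roum} there are $N_1,C_1>0$ with $|\doublehat{\,f\,}\!(\xi,\eta)_{mn_{rs}}|\le C_1\exp\{-M(N_1(\jp\xi+\jp\eta))\}$, and \eqref{hypothesis4} with parameter $N'=N_1/(2H)$ gives $|\lambda_m(\xi)+a\mu_r(\eta)|\ge c\exp\{-M(N'(\jp\xi+\jp\eta))\}$; using $2M(N'r)\le M(N_1r/2)+\log A\le M(N_1r)+\log A$ then yields $|\doublehat{\,u\,}\!(\xi,\eta)_{mn_{rs}}|\le (AC_1/c)\exp\{-M(N'(\jp\xi+\jp\eta))\}$, so $u\in\Gamma_{\{M_k\}}(G)$ by \eqref{roum}, and $Lu=f$ as before. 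The one step I expect to be genuinely delicate is this bookkeeping with the associated function: one must ensure that dividing by the small divisor $\lambda_m+a\mu_r$ costs at most a single application of $M$, which is then reabsorbed into a larger constant inside $M$ through the doubling inequality. It is also worth checking that \eqref{roum} and \eqref{distrou} are applied with the argument $\jp\xi+\jp\eta$ exactly as stated, so that no comparison of $\jp{\xi\otimes\eta}$ with $\jp\xi+\jp\eta$ enters.
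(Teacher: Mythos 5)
Your proposal is correct and follows essentially the same route as the paper's proof (Propositions \ref{voltasolvrou} and \ref{necsolrou}): sufficiency by the explicit Fourier-side division \eqref{fourieru} combined with \eqref{distrou} and the doubling inequality \eqref{komine}, necessity by contradiction via an $f\in\mathcal K$ supported on a sequence of small divisors, and the ``moreover'' part by rerunning the division against \eqref{roum}. The only differences are cosmetic parameter bookkeeping and your slightly more careful handling of which single matrix entry of $\doublehat{\,f\,}$ is set to $1$ in the necessity step.
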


From Theorems \ref{thm4} and \ref{solvrou}, we have that the global $\Gamma_{\{M_k\}}$-hypoellipticity of $L$ implies its global $\Gamma'_{\{M_k\}}$-solvability. Moreover, from \eqref{propM1} and the characterization of the global $\mathcal{D}'$--solvability of $L$ given in \cite{KMR19b}, if $L$ is globally $\mathcal{D}'$-solvable, then $L$ is globally $\Gamma'_{\{M_k\}}$-solvable.

For the sequence $M_k = (k!)^s$, with $s \geq 1$, the Komatsu class of Roumieu type is the well-known Gevrey class of order $s$ and when $s=1$ we obtain the class of analytic functions on $G$. For Gevrey classes, we have $M(r) \simeq r^{1/s}$ and \eqref{hypothesis4} becomes
$$
		|\lambda_m(\xi)+a\mu_r(\eta)|\geq C_N e^{-(N (\langle \xi \rangle + \langle \eta \rangle))^{1/s}}. 
$$

Analogously to the Roumieu type case, restricting the operator ${L=X_1 + a X_2}$ to the Komatsu class of Beurling type $\Gamma_{(M_k)}(G)$ we obtain an endomorphism, that is, ${L: \Gamma_{(M_k)}(G) \to \Gamma_{(M_k)}(G)}$. In this way, we can extend the operator $L$ to $u\in\Gamma'_{(M_k)}(G)$ as
$$
\jp{Lu,\varphi}:= -\jp{u,L\varphi}, \quad \forall \varphi \in \Gamma_{(M_k)}(G).
$$

	 \begin{defi}
	Let $G$ be a compact Lie group. We say that an operator ${P:\Gamma'_{(M_k)}(G) \to \Gamma'_{(M_k)}(G)}$ is globally ${\Gamma_{(M_k)}}$-hypoelliptic if the conditions $u \in \Gamma'_{(M_k)}(G)$ and $Pu \in \Gamma_{(M_k)}(G)$ imply that $u \in \Gamma_{(M_k)}(G)$.
\end{defi}

\begin{thm}\label{thm5}
	The operator $L=X_1+a X_2$ is globally $\Gamma_{(M_k)}$-hypoelliptic if and only if the following conditions are satisfied:
	\begin{enumerate}[1.]
		\item The set
		$$
		\mathcal{N}=\{([\xi], [\eta]) \in  \widehat{G_1} \times \widehat{G_2}; \ \lambda_m(\xi)+a \mu_r(\eta) = 0, \ \mbox{for some } 1 \leq m \leq d_\xi, 1 \leq r \leq d_\eta  \}
		$$
		is finite.
		\item $\exists C>0, N>0$ such that
		\begin{equation}\label{hypothesis5}
		|\lambda_m(\xi)+a\mu_r(\eta)|\geq C \exp \{-M(N (\langle \xi \rangle + \langle \eta \rangle))\}, 
		\end{equation}
		for all  $[\xi] \in \widehat{G_1}, \ [\eta] \in \widehat{G_2},  1 \leq m \leq d_\xi,  1 \leq r \leq d_\eta$ whenever $\lambda_m(\xi)+a\mu_r(\eta) \neq 0$.
	\end{enumerate}
\end{thm}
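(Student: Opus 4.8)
The plan is to carry over, essentially verbatim, the argument that proves Theorem~\ref{thm4}, replacing the Roumieu characterizations \eqref{roum}, \eqref{distrou} by their Beurling counterparts \eqref{caracbeurling}, \eqref{distbeur}. The computational core is again the scalar identity \eqref{fourierLu} (hence \eqref{fourieru} and \eqref{image0}), together with one elementary property of the associated function $M$: from the inequality $M_pM_q\le M_{p+q}$ recorded after (LC) and the binomial bound $a^pb^q\le(a+b)^{p+q}$ one gets
\[
e^{M(a)}e^{M(b)}=\sup_{p,q\ge0}\frac{a^pb^q}{M_pM_q}\le\sup_{p,q\ge0}\frac{(a+b)^{p+q}}{M_{p+q}}\le e^{M(a+b)},
\]
so that $M(ar)+M(br)\le M\bigl((a+b)r\bigr)$ for all $a,b,r>0$. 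This superadditivity is what allows one to absorb the growth $\exp\{M(\cdot)\}$ produced by an ultradistribution against the decay $\exp\{-M(\cdot)\}$ produced by an ultradifferentiable right-hand side.

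\emph{Sufficiency.} Assume $\mathcal{N}$ is finite and \eqref{hypothesis5} holds with fixed constants $C,N_0>0$. Let $u\in\Gamma'_{(M_k)}(G)$ with $f:=Lu\in\Gamma_{(M_k)}(G)$, and fix an arbitrary $N>0$. By \eqref{caracbeurling}, applied with exponent $N+N_0$, there is $C_{N+N_0}>0$ with $|\doublehat{\,f\,}\!(\xi,\eta)_{mn_{rs}}|\le C_{N+N_0}\exp\{-M((N+N_0)(\jp\xi+\jp\eta))\}$ for all indices. Whenever $\lambda_m(\xi)+a\mu_r(\eta)\ne0$, \eqref{fourieru} and \eqref{hypothesis5} give
\[
|\doublehat{\,u\,}\!(\xi,\eta)_{mn_{rs}}|\le\frac{C_{N+N_0}}{C}\exp\bigl\{M(N_0(\jp\xi+\jp\eta))-M((N+N_0)(\jp\xi+\jp\eta))\bigr\}\le\frac{C_{N+N_0}}{C}\exp\{-M(N(\jp\xi+\jp\eta))\},
\]
using the superadditivity of $M$. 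The coefficients with $\lambda_m(\xi)+a\mu_r(\eta)=0$ occur only at the finitely many pairs in $\mathcal{N}$; there are finitely many such coefficients, so they are bounded by a constant, and since $\exp\{-M(N(\jp\xi+\jp\eta))\}$ is bounded below by a positive number over a finite set, the same estimate holds after enlarging the constant. As $N>0$ was arbitrary, \eqref{caracbeurling} yields $u\in\Gamma_{(M_k)}(G)$.

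\emph{Necessity.} Assume $L$ is globally $\Gamma_{(M_k)}$-hypoelliptic. If $\mathcal{N}$ were infinite, choose distinct pairs $([\xi_j],[\eta_j])\in\mathcal{N}$ and indices with $\lambda_{m_j}(\xi_j)+a\mu_{r_j}(\eta_j)=0$; since $\{\jp\xi+\jp\eta\le R\}$ meets $\widehat{G_1}\times\widehat{G_2}$ in finitely many pairs for each $R$, we may assume $\jp{\xi_j}+\jp{\eta_j}\to\infty$. Prescribing $\doublehat{\,u\,}\!(\xi_j,\eta_j)_{m_jm_j{}_{r_jr_j}}=1$ for each $j$ and all remaining Fourier coefficients $0$ defines, by \eqref{distbeur}, a $u\in\Gamma'_{(M_k)}(G)$; by \eqref{fourierLu}, $Lu=0\in\Gamma_{(M_k)}(G)$, whereas $u\notin\Gamma_{(M_k)}(G)$ because $\exp\{-M(N(\jp{\xi_j}+\jp{\eta_j}))\}\to0$ by \eqref{propM1} while the prescribed coefficients stay equal to $1$ — contradiction, so $\mathcal{N}$ is finite. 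If instead \eqref{hypothesis5} failed, then for each $j$ there would be a pair $([\xi_j],[\eta_j])$ and indices with $0<|\lambda_{m_j}(\xi_j)+a\mu_{r_j}(\eta_j)|<\tfrac1j\exp\{-M(j(\jp{\xi_j}+\jp{\eta_j}))\}\le\tfrac1j$; such pairs cannot remain in a bounded region (there the finitely many nonzero values of $|\lambda_m+a\mu_r|$ are bounded below by a positive constant), so after passing to a subsequence we may take the pairs distinct with $\jp{\xi_j}+\jp{\eta_j}\to\infty$. Setting $\doublehat{\,u\,}\!(\xi_j,\eta_j)_{m_jm_j{}_{r_jr_j}}=1$ and all other coefficients $0$ again gives, as above, $u\in\Gamma'_{(M_k)}(G)\setminus\Gamma_{(M_k)}(G)$, while $\doublehat{\,Lu\,}\!(\xi_j,\eta_j)_{m_jm_j{}_{r_jr_j}}=i(\lambda_{m_j}(\xi_j)+a\mu_{r_j}(\eta_j))$ has modulus $<\tfrac1j\exp\{-M(j(\jp{\xi_j}+\jp{\eta_j}))\}\le\exp\{-M(N(\jp{\xi_j}+\jp{\eta_j}))\}$ as soon as $j\ge N$; handling the finitely many $j<N$ crudely, \eqref{caracbeurling} gives $Lu\in\Gamma_{(M_k)}(G)$, which contradicts the global $\Gamma_{(M_k)}$-hypoellipticity.

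The Fourier-side estimates are routine; the step I expect to be the main obstacle is keeping the Beurling quantifier pattern straight — ``for all $N$ there is $C_N$'' for ultradifferentiable functions against ``there exist $N$ and $C$'' for ultradistributions, which is dual to the bookkeeping in Theorem~\ref{thm4} — and it is precisely this pattern that makes the superadditivity $M(ar)+M(br)\le M((a+b)r)$ essential in the sufficiency part; a secondary point is to confirm that the arrays built in the necessity part really define Beurling ultradistributions that are not Beurling ultradifferentiable, which is where forcing $\jp{\xi_j}+\jp{\eta_j}\to\infty$ is used.
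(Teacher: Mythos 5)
Your necessity arguments are fine and essentially coincide with the paper's (Propositions \ref{nechbeu1} and \ref{nechbeu2}): the same singular sequences with coefficients equal to $1$, the same use of \eqref{distbeur} and \eqref{propM1}. The problem is in the sufficiency part, and it sits exactly at the step you yourself identified as the crux. The ``superadditivity'' $M(a)+M(b)\le M(a+b)$ is false in general, and your derivation of it uses the inequality $M_pM_q\le M_{p+q}$ in the wrong direction: to pass from $\sup_{p,q}\frac{a^pb^q}{M_pM_q}$ to $\sup_{p,q}\frac{(a+b)^{p+q}}{M_{p+q}}$ you would need $a^pb^q\,M_{p+q}\le (a+b)^{p+q}M_pM_q$, i.e.\ a bound of the form $M_{p+q}\lesssim\binom{p+q}{p}M_pM_q$, which is precisely what $(LC)$ does \emph{not} give (it gives the reverse, $M_pM_q\le M_{p+q}$). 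Concretely, for the Gevrey sequence $M_k=(k!)^2$ one has $M(r)\sim 2\sqrt r$ up to logarithms, and already $M(8)+M(8)=\log 256>\log\frac{16^4}{(4!)^2}=M(16)$; asymptotically $2\sqrt a+2\sqrt b>2\sqrt{a+b}$, so the inequality fails for the most standard examples covered by the theorem. Consequently the displayed estimate $\exp\{M(N_0 r)-M((N+N_0)r)\}\le\exp\{-M(Nr)\}$ is unjustified and your sufficiency proof does not close.

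The correct tool is the condition (M.2) in the form \eqref{komine}, which gives $2M(qr)\le M(Hqr)+\log A$, hence $M(N_0r)+M(Nr)\le 2M(\max(N_0,N)r)\le M\bigl(H\max(N_0,N)r\bigr)+\log A$. Choosing the Beurling exponent $N'=H\max(N_0,N)$ in \eqref{caracbeurling} (rather than $N'=N+N_0$) repairs your argument and turns it into the paper's proof of Proposition \ref{sufhbeu}, where exactly this choice (together with the harmless case distinction $N\le D$ versus $N>D$) is made. The handling of the finitely many coefficients attached to $\mathcal N$ is fine as you wrote it.
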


Notice that the conditions for the global $\Gamma_{\{M_k\}}-$hypoellipticity of the Theorem \ref{thm4} imply the conditions for the  global $\Gamma_{(M_k)}-$hypo\-ellipti\-ci\-ty of the Theorem \ref{thm5}. In this way, if $L$ is globally $\Gamma_{\{M_k\}}-$hypo\-ellip\-tic, then $L$ is globally $\Gamma_{(M_k)}-$hypo\-ellip\-tic.

Now, to study the global solvability of $L$ in Komatsu classes of Beurling type, we define 
$$
\mathcal{K}:= \{f\in \Gamma'_{(M_k)}(G) ; \ \doublehat{\,f\,}\!(\xi,\eta)_{mn_{rs}}=0 \textrm{ whenever } \lambda_m(\xi)+a \mu_r(\eta)=0\}.
$$
So, if $f \notin \mathcal{K}$ then there are no $u \in \Gamma'_{(M_k)}(G)$ satisfying $Lu=f$.

\begin{defi}
	We say the operator $L$ is globally $\Gamma'_{(M_k)}$-solvable if $L(\Gamma'_{(M_k)}(G))=\mathcal{K}$.
\end{defi}

Again, we always have $L(\Gamma'_{(M_k)}(G)) \subseteq \mathcal{K}$. The next result gives us the condition for the other inclusion.
\begin{thm}\label{solvabeur}
	The operator $L=X_1 + a X_2$ is globally $\Gamma'_{(M_k)}$-solvable if and only if \eqref{hypothesis5} holds, that is, there exist $C,\, N>0$ such that
	\begin{equation}\tag{\ref{hypothesis5}}
	|\lambda_m(\xi)+a\mu_r(\eta)|\geq C \exp \{-M(N (\langle \xi \rangle + \langle \eta \rangle))\} 
	\end{equation}
	holds for all  $[\xi] \in \widehat{G_1}, \ [\eta] \in \widehat{G_2},  1 \leq m \leq d_\xi,  1 \leq r \leq d_\eta,$ whenever $\lambda_m(\xi)+a\mu_r(\eta) \neq 0$.
	
	Moreover, if $L$ is globally $\Gamma'_{(M_k)}$--solvable, then for an admissible ultradifferentiable function $f \in  \Gamma_{(M_k)}(G)$,  there exists  $u \in \Gamma_{(M_k)}(G)$ such that $Lu=f$. 
\end{thm}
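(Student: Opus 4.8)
The plan is to mirror the structure of the Roumieu solvability result (Theorem \ref{solvrou}), adapting the quantifiers to the Beurling setting. Recall from \eqref{image0} that $L(\Gamma'_{(M_k)}(G)) \subseteq \mathcal{K}$ always holds, so the content is the reverse inclusion under \eqref{hypothesis5}, together with its necessity.

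For sufficiency, suppose \eqref{hypothesis5} holds with constants $C, N>0$, and let $f \in \mathcal{K}$. Using \eqref{distbeur}, the ultradistribution $f$ satisfies $|\doublehat{\,f\,}\!(\xi,\eta)_{mn_{rs}}| \leq C' \exp\{M(N'(\jp\xi+\jp\eta))\}$ for some $C', N'>0$. I would define $u$ by prescribing its partial Fourier coefficients: set $\doublehat{\,u\,}\!(\xi,\eta)_{mn_{rs}} = \frac{1}{i(\lambda_m(\xi)+a\mu_r(\eta))}\doublehat{\,f\,}\!(\xi,\eta)_{mn_{rs}}$ when $\lambda_m(\xi)+a\mu_r(\eta)\neq 0$, and $0$ otherwise (this is consistent since $f\in\mathcal{K}$). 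Dividing by the lower bound \eqref{hypothesis5} gives
$$
|\doublehat{\,u\,}\!(\xi,\eta)_{mn_{rs}}| \leq \frac{C'}{C}\exp\{M(N'(\jp\xi+\jp\eta))\}\exp\{M(N(\jp\xi+\jp\eta))\}.
$$
I would then absorb the product of exponentials into a single one of the form $\exp\{M(N''(\jp\xi+\jp\eta))\}$; this uses a standard property of the associated function $M$ (subadditivity up to constants, of the type $2M(r) \leq M(Hr) + \log A$ or similar, which follows from (M.1)–(M.2)), analogous to \eqref{komine}. Hence $u$ satisfies the growth condition \eqref{distbeur} and defines an element of $\Gamma'_{(M_k)}(G)$, and by construction $Lu=f$ via \eqref{fourierLu}. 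This proves $\mathcal{K}\subseteq L(\Gamma'_{(M_k)}(G))$.

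For necessity, suppose \eqref{hypothesis5} fails: for every $C,N>0$ there exist $[\xi],[\eta],m,r$ with $0<|\lambda_m(\xi)+a\mu_r(\eta)| < C\exp\{-M(N(\jp\xi+\jp\eta))\}$. Choosing $C=N=j$ for $j\in\N$, I would extract a sequence of data points $([\xi_j],[\eta_j],m_j,r_j)$ along which the quotient $\exp\{-M(N(\jp{\xi_j}+\jp{\eta_j}))\}/|\lambda_{m_j}(\xi_j)+a\mu_{r_j}(\eta_j)|$ grows faster than any allowed bound. Using these I would build $f\in\mathcal{K}$ (placing mass only at the selected matrix entries, with carefully chosen magnitudes so that $f$ still satisfies \eqref{distbeur}) for which the forced coefficients of any putative solution $u$ — given by \eqref{fourieru} — violate \eqref{distbeur}, so $u\notin\Gamma'_{(M_k)}(G)$; thus $f\notin L(\Gamma'_{(M_k)}(G))$ and solvability fails. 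One must check that the selected indices can be taken distinct (so the construction is well-defined), handling the subcase where the $(\jp{\xi_j}+\jp{\eta_j})$ stay bounded — then there are only finitely many representations involved, all the nonzero $|\lambda_m+a\mu_r|$ among them are bounded below by a positive constant, contradicting the failure of \eqref{hypothesis5} — so in fact $\jp{\xi_j}+\jp{\eta_j}\to\infty$ and the indices are automatically distinct along a subsequence.

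Finally, for the regularity addendum: if $L$ is globally $\Gamma'_{(M_k)}$-solvable and $f\in\Gamma_{(M_k)}(G)$ is admissible, then $f$ satisfies the stronger decay \eqref{caracbeurling}, namely $|\doublehat{\,f\,}\!(\xi,\eta)_{mn_{rs}}|\leq C_L\exp\{-M(L(\jp\xi+\jp\eta))\}$ for every $L>0$. Feeding this into the solution formula and using \eqref{hypothesis5} with its fixed $N$, the coefficients of $u$ obey $|\doublehat{\,u\,}\!(\xi,\eta)_{mn_{rs}}|\leq (C_L/C)\exp\{-M(L(\jp\xi+\jp\eta))+M(N(\jp\xi+\jp\eta))\}$; since $L$ is arbitrary while $N$ is fixed, choosing $L$ large and again invoking a property of $M$ to dominate the difference yields, for every prescribed $L'>0$, a bound $\exp\{-M(L'(\jp\xi+\jp\eta))\}$ up to a constant, so $u\in\Gamma_{(M_k)}(G)$ by \eqref{caracbeurling}.

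I expect the main obstacle to be the necessity direction, specifically the bookkeeping needed to guarantee that the bad indices can be chosen at distinct representations (so the constructed $f$ is a legitimate ultradistribution with the required growth) and that the resulting $u$ genuinely fails the Beurling bound for \emph{some} $N$ rather than merely for large $N$ — the Beurling "$\exists N$" quantifier in \eqref{distbeur} makes this slightly more delicate than in the Roumieu case, and is exactly why the hypothesis weakens from \eqref{hypothesis4} to \eqref{hypothesis5}.
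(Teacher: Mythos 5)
Your proposal follows essentially the same route as the paper's Propositions \ref{voltasolvbeur} and \ref{idasolvbeu} (division by the symbol plus \eqref{komine} for sufficiency; a sequence of near-resonant coefficients set to $1$ for necessity; the same bootstrap via \eqref{caracbeurling} for the regularity addendum), and it is correct in outline, including your resolution of the bounded-subsequence subcase. The one slip is the extraction ``$C=N=j$'': take instead $C=1/j$ and $N=j$ (the paper's factor $\tfrac{1}{K}$), so that any putative solution satisfies $|\doublehat{\,u\,}\!(\xi_j,\eta_j)_{\tilde{m}1_{\tilde{r}1}}|\geq j\exp\{M(j(\jp{\xi_j}+\jp{\eta_j}))\}$, which defeats every Beurling bound $C'\exp\{M(N'(\jp{\xi_j}+\jp{\eta_j}))\}$ once $j>\max\{C',N'\}$ with no control needed on how fast $\jp{\xi_j}+\jp{\eta_j}$ grows, whereas with $C=j$ the resulting prefactor $1/j$ in the lower bound for $u$ is not obviously harmless.
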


	Notice that if $L$ is globally $\Gamma_{(M_k)}$-hypoelliptic, then $L$ is globally  $\Gamma'_{(M_k)}$--sol\-va\-ble. Moreover, 	if $L$ is globally $\Gamma'_{\{M_k\}}$-solvable, then $L$ is globally $\Gamma'_{(M_k)}$-solvable.

\begin{rem}   
In the article \cite{KMR19b}, to construct examples of globally hypoelliptic vector fields on compact Lie groups, the condition that the set $\mathcal{N} $ has finitely many elements led us to restrict our study to the torus. For this reason, we believe that  the Greenfield-Wallach conjecture is related to this condition. 
Since this same condition on $\mathcal{N}$ appears in the study of the global hypoellipticity in Komatsu classes, we believe that this conjecture extends to these classes. 

Moreover, following the same ideas as in Section 4 of \cite{KMR19b}, it is possible to consider weaker notions of global hypoellipticity in Komatsu classes (hypoellipticity modulo kernel and $\mathcal{W}-$hypoellipticity).

In order to construct examples of global hypoelliptic operators in Komatsu classes defined out of tori, in Section \ref{low-order-pert} we study low-order perturbations of vector fields.
\end{rem}

\section{Proofs in Komatsu classes of Roumieu type}
The main objective of this section is to demonstrate Theorems \ref{thm4} and \ref{solvrou} stated in the previous section and to discuss some of their consequences. Thus, in Propositions \ref{nechrou1} and \ref{nechrou2} we present the  necessary conditions for the operator $L$ to be globally $\Gamma_{\{M_k\}}$--hypoelliptic,  and in Proposition \ref{voltahyporou} we show that these conditions are also sufficient for its global $\Gamma_{\{M_k\}}$--hypoellipticity. We conclude this section with Propositions \ref{voltasolvrou} and \ref{necsolrou}, that present the necessary and sufficient conditions for the operator $L$ to be globally $\Gamma'_{\{M_k\}}$--  
solvable.

	In \cite{KMR19b} it was proved that if the set 	\begin{equation*}\label{setN}
	\mathcal{N}=\{([\xi], [\eta]) \in  \widehat{G_1} \times \widehat{G_2}; \ \lambda_m(\xi)+a \mu_r(\eta) = 0, \ \mbox{for some } 1 \leq m \leq d_\xi, 1 \leq r \leq d_\eta  \}
	\end{equation*}
	has infinitely many elements, then there exists $u \in \DG \backslash C^\infty(G)$ such that $Lu=0$. As such distribution $u$ also belongs to   $\Gamma'_{\{M_k\}(G)} \setminus \Gamma_{\{M_k\}}(G)$, we obtain the following necessary condition for the global $\Gamma_{\{M_k\}}$--hypoellipticity of $L$:
\begin{prop}\label{nechrou1}
	If $L$ is globally $\Gamma_{\{M_k\}}$--hypoelliptic then $\mathcal{N}$ is a finite set.
\end{prop}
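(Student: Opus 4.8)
The plan is to reduce Proposition~\ref{nechrou1} to the result already quoted from \cite{KMR19b}: if $\mathcal{N}$ is infinite, there exists $u \in \mathcal{D}'(G)\setminus C^\infty(G)$ with $Lu=0$. The only gap to fill is the observation already stated in the text, namely that such a $u$ lies in $\Gamma'_{\{M_k\}}(G)\setminus\Gamma_{\{M_k\}}(G)$; once that is justified, $Lu=0\in\Gamma_{\{M_k\}}(G)$ while $u\notin\Gamma_{\{M_k\}}(G)$ violates global $\Gamma_{\{M_k\}}$--hypoellipticity, giving the contrapositive.

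First I would recall the construction from \cite{KMR19b}: since $\mathcal{N}$ is infinite, one picks, for each of infinitely many pairs $([\xi],[\eta])\in\mathcal{N}$, indices $m,r$ with $\lambda_m(\xi)+a\mu_r(\eta)=0$, and defines $u$ by prescribing $\doublehat{\,u\,}\!(\xi,\eta)_{mn_{rs}}$ to be (say) $1$ on a suitable single choice of the remaining indices $n,s$ for each such pair and $0$ otherwise. Because the nonzero Fourier coefficients all equal $1$, in particular $|\doublehat{\,u\,}\!(\xi,\eta)_{mn_{rs}}|\le 1$, so by the characterization \eqref{distrou} of $\Gamma'_{\{M_k\}}(G)$ — which only requires a bound by $C_N\exp\{M(N(\jp{\xi}+\jp{\eta}))\}$ for every $N>0$, and $M\ge 0$ makes the right-hand side $\ge 1$ — we get $u\in\Gamma'_{\{M_k\}}(G)$. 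On the other hand $u\notin C^\infty(G)\supset\Gamma_{\{M_k\}}(G)$, so $u\notin\Gamma_{\{M_k\}}(G)$; alternatively, one sees this directly from \eqref{roum}, since the coefficients do not decay like $C\exp\{-M(N(\jp{\xi}+\jp{\eta}))\}$ along the infinitely many pairs in $\mathcal{N}$ (as $\jp{\xi}+\jp{\eta}\to\infty$, $\exp\{-M(N(\jp{\xi}+\jp{\eta}))\}\to 0$ by \eqref{propM1}, while the coefficients stay equal to $1$). Finally $Lu=0$ by \eqref{image0}, since every nonzero coefficient of $u$ sits at a position where $\lambda_m(\xi)+a\mu_r(\eta)=0$, and $0\in\Gamma_{\{M_k\}}(G)$.

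There is essentially no hard analytic step here; the proposition is a soft transfer of a result about $\mathcal{D}'$ versus $C^\infty$ to the ultradistribution setting, exploiting the inclusions $\Gamma_{\{M_k\}}(G)\subset C^\infty(G)$ and $\mathcal{D}'(G)\subset\Gamma'_{\{M_k\}}(G)$ together with the fact that a function identically $1$ (or bounded) along a sequence of representations is automatically an ultradistribution but fails the ultradifferentiable decay. The one point requiring a little care is to make sure the coefficient choice in the \cite{KMR19b} construction can indeed be taken bounded (which it can, as the construction only needs some single nonzero value per pair), so that the membership $u\in\Gamma'_{\{M_k\}}(G)$ is immediate from \eqref{distrou}. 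If one prefers not to reopen that construction, it suffices simply to invoke the chain of inclusions: $u\in\mathcal{D}'(G)\subset\Gamma'_{\{M_k\}}(G)$ and $u\notin C^\infty(G)\Rightarrow u\notin\Gamma_{\{M_k\}}(G)$, together with $Lu=0\in\Gamma_{\{M_k\}}(G)$, which already contradicts global $\Gamma_{\{M_k\}}$--hypoellipticity and proves the contrapositive of the proposition.
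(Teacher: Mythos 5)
Your proposal is correct and follows essentially the same route as the paper, which proves this proposition simply by citing the construction from \cite{KMR19b} of a $u \in \mathcal{D}'(G)\setminus C^\infty(G)$ with $Lu=0$ and then invoking the inclusions $\mathcal{D}'(G)\subset\Gamma'_{\{M_k\}}(G)$ and $\Gamma_{\{M_k\}}(G)\subset C^\infty(G)$. Your closing paragraph is exactly the paper's argument; the extra detail you give about the bounded Fourier coefficients is a harmless (and correct) elaboration.
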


In the next proposition we give the second necessary condition for the global $\Gamma_{\{M_k\}}$--hypoellipticity of $L$.

\begin{prop}\label{nechrou2}
	If $L$ is globally $\Gamma_{\{M_k\}}$--hypoelliptic then for any $N>0$ there exits $C_N>0$ such that
	\begin{equation}\tag{\ref{hypothesis4}}
	|\lambda_m(\xi)+a\mu_r(\eta)|\geq C_N \exp \{-M(N (\langle \xi \rangle + \langle \eta \rangle))\}
	\end{equation}
	holds for all  $[\xi] \in \widehat{G_1}, \ [\eta] \in \widehat{G_2},  \ 1 \leq m \leq d_\xi, \ 1 \leq r \leq d_\eta$ whenever $\lambda_m(\xi)+a\mu_r(\eta) \neq 0$.
\end{prop}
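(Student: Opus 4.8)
The plan is to prove the contrapositive: if \eqref{hypothesis4} fails, then $L$ is not globally $\Gamma_{\{M_k\}}$-hypoelliptic. So assume there is $N_0>0$ such that for every $j\in\N$ one can find $[\xi_j]\in\widehat{G_1}$, $[\eta_j]\in\widehat{G_2}$ and indices $1\le m_j\le d_{\xi_j}$, $1\le r_j\le d_{\eta_j}$ with
$$0<|\lambda_{m_j}(\xi_j)+a\mu_{r_j}(\eta_j)|<\tfrac1j\,\exp\{-M(N_0(\jp{\xi_j}+\jp{\eta_j}))\}.$$
First I would argue that the pairs $([\xi_j],[\eta_j])$ may be taken pairwise distinct: for a fixed pair the set $\{\lambda_m(\xi)+a\mu_r(\eta)\}_{m,r}$ is finite and hence has a smallest positive absolute value, while the right-hand side above is $\le 1/j\to 0$; so each pair can be chosen for only finitely many $j$, and after passing to a subsequence all pairs are distinct. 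Consequently $\jp{\xi_j}+\jp{\eta_j}\to\infty$, since for any bound only finitely many representations of $G_1$ (and of $G_2$) have $\jp{\cdot}$ below it.

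Next I would define $u$ to be the ultradistribution whose partial Fourier coefficients are
$$\doublehat{\,u\,}\!(\xi,\eta)_{mn_{rs}}=\begin{cases}1,& ([\xi],[\eta],m,n,r,s)=([\xi_j],[\eta_j],m_j,m_j,r_j,r_j)\text{ for some }j,\\ 0,&\text{otherwise.}\end{cases}$$
Since $M\ge 0$, these coefficients are bounded by $\exp\{M(N(\jp{\xi}+\jp{\eta}))\}$ for every $N>0$, so $u\in\Gamma'_{\{M_k\}}(G)$ by \eqref{distrou}. On the other hand, along the sequence $([\xi_j],[\eta_j])$ the coefficients equal $1$ whereas $\exp\{-M(N(\jp{\xi_j}+\jp{\eta_j}))\}\to 0$ for every $N>0$, so no bound of the type appearing in \eqref{roum} can hold; hence $u\notin\Gamma_{\{M_k\}}(G)$.

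Finally, by \eqref{fourierLu} the only possibly nonzero coefficient of $Lu$ in the block $(\xi_j,\eta_j)$ is $i(\lambda_{m_j}(\xi_j)+a\mu_{r_j}(\eta_j))$ and all other coefficients of $Lu$ vanish, so the defining inequality for the bad sequence gives
$$|\doublehat{\,Lu\,}\!(\xi,\eta)_{mn_{rs}}|\le\exp\{-M(N_0(\jp{\xi}+\jp{\eta}))\}$$
for all $[\xi]\in\widehat{G_1}$, $[\eta]\in\widehat{G_2}$ and all admissible indices. By \eqref{roum} (with $N=N_0$ and $C=1$) this means $Lu\in\Gamma_{\{M_k\}}(G)$. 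Thus $u\in\Gamma'_{\{M_k\}}(G)\setminus\Gamma_{\{M_k\}}(G)$ while $Lu\in\Gamma_{\{M_k\}}(G)$, contradicting the global $\Gamma_{\{M_k\}}$-hypoellipticity of $L$, and the contrapositive is established. I expect the only subtle step to be the sparsity argument guaranteeing $\jp{\xi_j}+\jp{\eta_j}\to\infty$ (so that $u$ genuinely fails to be ultradifferentiable); everything else is a direct application of the Fourier-coefficient characterizations \eqref{roum} and \eqref{distrou}.
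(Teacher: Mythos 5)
Your proof is correct and follows essentially the same route as the paper's: a proof by contradiction built on the same bad sequence coming from the negation of \eqref{hypothesis4}, exhibiting $u \in \Gamma'_{\{M_k\}}(G)\setminus\Gamma_{\{M_k\}}(G)$ with $Lu \in \Gamma_{\{M_k\}}(G)$. The only difference is a normalization: the paper takes $\doublehat{\, u \,}\!(\xi_K,\eta_K)_{\tilde m 1_{\tilde r 1}}=-i(\jp{\xi_K}+\jp{\eta_K})$, so that $u\notin C^\infty(G)$ and \eqref{propM1}--\eqref{komine} are invoked to check that $Lu$ is ultradifferentiable, whereas you set the coefficients equal to $1$ and instead supply the (correct) sparsity argument that $\jp{\xi_j}+\jp{\eta_j}\to\infty$, making both verifications slightly more direct.
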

\begin{proof}
	Let us prove it by contradiction. Suppose that there exists $N >0$ such that for all $K\in\N$ there exist $[\xi_K] \in \widehat{G_1}$ and $[\eta_K] \in\widehat{G_2}$ satisfying
	\begin{equation}\label{2notsa2}
	0<|\lambda_m(\xi_K)+a\mu_r(\eta_K)| < \textstyle\frac{1}{K}\exp\{-M(N(\langle \xi_K \rangle + \langle \eta_K\rangle))\},
	\end{equation}
	for some $1 \leq m \leq d_{\xi_K}$ and $1 \leq r \leq d_{\eta_K}$. We can suppose that $([\xi_K],[\eta_K]) \notin \mathcal{N}$ and that  $\jp{\xi_j}+\jp{\eta_j} \leq \jp{\xi_\ell} + \jp{\eta_\ell}$ when $j\leq \ell$.
	
	Let $K \in\N$ and $\tilde{m}$ and $\tilde{r}$ be such that \eqref{2notsa2} holds. Define 
	$$\doublehat{\,f\,}\!(\xi_K,\eta_K)_{mn_{rs}} := \left\{
	\begin{array}{ll}
	(\lambda_m(\xi_K)+a \mu_r(\eta_K))(\langle \xi_K \rangle + \langle \eta_K\rangle), & \mbox{if } (m,n)=(\tilde{m},1) \mbox{ and } (r,s)=(\tilde{r},1)\\
	0, & \mbox{otherwise. }
	\end{array}
	\right.
	$$
	
	Let  $C>0$ be obtained from \eqref{propM1} satisfying
	\begin{equation*}\label{constant}
	(\langle \xi_K \rangle + \langle \eta_K\rangle)\exp\left\{-\textstyle\frac{1}{2}M(N(\langle \xi_K \rangle + \langle \eta_K\rangle))\right\} < C,
	\end{equation*}
	for all $K \in \N$. Hence
	\begin{align}
	|\doublehat{\,f\,}\!(\xi_K,\eta_K)_{\tilde{m}1_{\tilde{r}1}}|	&= |\lambda_{\tilde{m}}(\xi_K)+a \mu_{\tilde{r}} (\eta_K)|(\langle \xi_K \rangle + \langle \eta_K\rangle) \nonumber \\
	&\leq  \frac{1}{K}\exp\left\{-M(N(\langle \xi_K \rangle + \langle \eta_K\rangle))\right\}(\jp{\xi_K}+\jp{\eta_K}) \nonumber \\
	&\leq  C \exp\{-M(N(\langle \xi_K \rangle + \langle \eta_K\rangle))\}\exp\{\tfrac{1}{2}M(N(\langle \xi_K \rangle + \langle \eta_K\rangle))\} \nonumber \\
	&\leq C \exp\{-M(\tilde{N}(\jp{\xi_K}+\jp{\eta_K})) \} \nonumber ,
	\end{align}
	where the last inequality comes from \eqref{komine} with $\tilde{N}=\frac{N}{H}$.
	Thus $f \in \Gamma_{\{M_k\}}(G).$
	
	By \eqref{fourieru} and \eqref{image0}, if $Lu=f$ for some $u \in \Gamma'_{\{M_k \}}(G)$, we have
	$$
	\doublehat{\, u \,}\!(\xi_K,\eta_K)_{mn_{rs}} = \left\{
	\begin{array}{ll}
	-i(\langle \xi_K \rangle + \langle \eta_K\rangle), & \mbox{if } mn=\tilde{m}1, rs=\tilde{r}1 \\
	0, & \mbox{otherwise. }
	\end{array}
	\right.
	$$
	In particular,
	\begin{equation}\label{contra2}
	\left|\doublehat{\, u \,}\!(\xi_K,\eta_K)_{\tilde{m}1_{\tilde{r}1}} \right|= \langle \xi_K \rangle + \langle \eta_K\rangle,
	\end{equation}
	for all $K \in \N$. Thus
	$$
	|\doublehat{\, u \,}\!(\xi,\eta)_{mn_{rs}}| \leq \jp{\xi}+\jp{\eta},
	$$ 
	for all $[\xi] \in \widehat{G_1}$, $[\eta] \in \widehat{G_2}$, $1\leq m,n \leq d_\xi$ and $1\leq r,s \leq d_\eta$. Therefore $u \in\DG$ and then $u \in \Gamma_{\{M_k\}}'(G)$. By \eqref{contra2} $u \notin C^{\infty}(G)$. Consequently $ u \notin \Gamma_{\{M_k\}}(G)$, which contradicts the fact that $L$ is globally $\Gamma_{\{M_k\}}$-hypoelliptic.
\end{proof}

To conclude the proof of Theorem \ref{thm4}, let us show that the necessary conditions for the global $\Gamma_{\{M_k\}}$--hypoellipticity for $L$ presented above are also sufficient conditions.

\begin{prop}\label{voltahyporou}
	Assume that the set $\mathcal{N}$ is finite and for any $N>0$ there exits $C_N>0$ such that
	\begin{equation}\tag{\ref{hypothesis4}}
	|\lambda_m(\xi)+a\mu_r(\eta)|\geq C_N \exp \{-M(N (\langle \xi \rangle + \langle \eta \rangle))\}, 
	\end{equation}
	for all  $[\xi] \in \widehat{G_1}, \ [\eta] \in \widehat{G_2},  \ 1 \leq m \leq d_\xi, \ 1 \leq r \leq d_\eta$ whenever $\lambda_m(\xi)+a\mu_r(\eta) \neq 0$, then $L$ is globally $\Gamma_{\{M_k\}}$--hypoelliptic.
\end{prop}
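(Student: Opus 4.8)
The plan is to prove sufficiency by a direct estimate on Fourier coefficients, using the characterizations \eqref{roum} of $\Gamma_{\{M_k\}}(G)$ and \eqref{distrou} of $\Gamma'_{\{M_k\}}(G)$. Suppose $u \in \Gamma'_{\{M_k\}}(G)$ and $Lu = f \in \Gamma_{\{M_k\}}(G)$. We must show $u \in \Gamma_{\{M_k\}}(G)$, i.e.\ produce $N_0 > 0$ and $C_0 > 0$ with $|\doublehat{\,u\,}\!(\xi,\eta)_{mn_{rs}}| \leq C_0 \exp\{-M(N_0(\jp{\xi}+\jp{\eta}))\}$ for all representations and all indices. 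The coefficients split into two families according to the dichotomy \eqref{fourieru}--\eqref{image0}.

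First I would handle the finite exceptional part. For $([\xi],[\eta]) \in \mathcal{N}$, or more precisely for those index quadruples where $\lambda_m(\xi) + a\mu_r(\eta) = 0$, equation \eqref{image0} gives $\doublehat{\,Lu\,}\!(\xi,\eta)_{mn_{rs}} = 0$, which places no constraint on $\doublehat{\,u\,}\!(\xi,\eta)_{mn_{rs}}$; but since $\mathcal{N}$ is finite, there are only finitely many such coefficients (each representation is finite-dimensional), so they are trivially bounded by a constant, hence dominated by $C\exp\{-M(N(\jp{\xi}+\jp{\eta}))\}$ for any $N$ after enlarging the constant, using that $M$ is non-decreasing and the $\jp{\xi}+\jp{\eta}$ involved are bounded. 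Second, for the quadruples with $\lambda_m(\xi)+a\mu_r(\eta) \neq 0$, I would use \eqref{fourieru} together with the lower bound \eqref{hypothesis4}: writing $C_N$ from \eqref{hypothesis4} and $C_f, N_f$ from the membership $f \in \Gamma_{\{M_k\}}(G)$ via \eqref{roum}, we get
\begin{align}
|\doublehat{\,u\,}\!(\xi,\eta)_{mn_{rs}}| &= \frac{|\doublehat{\,Lu\,}\!(\xi,\eta)_{mn_{rs}}|}{|\lambda_m(\xi)+a\mu_r(\eta)|} \nonumber \\
&\leq \frac{C_f \exp\{-M(N_f(\jp{\xi}+\jp{\eta}))\}}{C_N \exp\{-M(N(\jp{\xi}+\jp{\eta}))\}}. \nonumber
\end{align}
The key point is that $N$ in \eqref{hypothesis4} is at our disposal (the hypothesis is "for all $N$"), so I choose $N$ small enough relative to $N_f$ that the exponential quotient still decays like $\exp\{-M(N_0(\jp{\xi}+\jp{\eta}))\}$ for some $N_0>0$. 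Concretely, applying \eqref{komine} (and iterating it, i.e.\ using it with the constant $H$ a fixed number of times, or directly with a halving argument), one absorbs $\exp\{M(N(\jp{\xi}+\jp{\eta}))\}$ into a fraction of $\exp\{-M(N_f(\jp{\xi}+\jp{\eta}))\}$: choosing $N = N_f/(2H)$ or similar makes $\exp\{M(N(\cdot))\} \leq \sqrt{A}\,\exp\{M(N_f/2\cdot(\cdot))\} \cdot (\text{stuff})$, leaving a net bound $C_0 \exp\{-M(N_0(\jp{\xi}+\jp{\eta}))\}$ with, say, $N_0 = N_f/2$.

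Combining the two families, $u$ satisfies \eqref{roum}, hence $u \in \Gamma_{\{M_k\}}(G)$, proving global $\Gamma_{\{M_k\}}$-hypoellipticity. The main obstacle — really the only non-formal step — is the bookkeeping in the exponential estimate: matching the parameter $N$ in the lower bound \eqref{hypothesis4} against the decay rate $N_f$ coming from $f$, and correctly invoking \eqref{komine} (with its constant $q_2 = q/H$) so that the difference $\exp\{-M(N_f(\cdot))+M(N(\cdot))\}$ is genuinely bounded by $C_0 \exp\{-M(N_0(\cdot))\}$ for a positive $N_0$; this is where properties (M.1)–(M.2) of the weight sequence, encoded in \eqref{komine}, are essential, since $M$ is not subadditive but the stability conditions give exactly the needed control of $M$ at scaled arguments. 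Everything else is reduction to the finite set $\mathcal{N}$ and direct substitution.
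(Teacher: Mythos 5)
Your proposal is correct and follows essentially the same route as the paper's proof: discard the finitely many coefficients attached to $\mathcal{N}$, invert the symbol via \eqref{fourieru}, bound the quotient using \eqref{hypothesis4} against the decay \eqref{roum} of $f$, and exploit the freedom in $N$ together with \eqref{komine} (the paper takes $N=N'/H$, where $N'$ is the decay rate of $f$, so that $\exp\{-M(N'(\jp{\xi}+\jp{\eta}))\}\leq A\exp\{-2M(N(\jp{\xi}+\jp{\eta}))\}$ absorbs the growth factor). The only difference is cosmetic bookkeeping in the final choice of the parameter.
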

\begin{proof}
Suppose $Lu=f \in \Gamma_{\{M_k\}}(G)$ for some $u \in \Gamma_{\{M_k\}}(G)$. Since $\mathcal{N}$ is finite, it is enough to study the behavior of $\widehat{u}(\xi,\eta)_{mn_{rs}}$ outside of $\mathcal{N}$. If $([\xi],[\eta]) \notin \mathcal{N}$, by \eqref{fourieru} we have
$$
\doublehat{\, u \,}\!(\xi,\eta)_{mn_{rs}} = -i( \lambda_m (\xi) + a \mu_r(\eta))^{-1} \doublehat{\,f\,}\!(\xi,\eta)_{mn_{rs}},
$$
for all $1 \leq m \leq d_\xi$ and $1 \leq r \leq d_\eta$. Thus
\begin{align}
|\doublehat{\, u \,}\!(\xi,\eta)_{mn_{rs}}|	& =  |{\lambda_m(\xi)+ a \mu_r(\eta)}|^{-1} |\doublehat{\,f\,}\!(\xi,\eta)_{mn_{rs}}|\nonumber \\
&\leq C_N\exp\{M(N(\langle \xi\rangle +\langle \eta \rangle))\} |\doublehat{\,f\,}\!(\xi,\eta)_{mn_{rs}}| \nonumber
\end{align}

Since $f \in\Gamma_{\{M_k\}}(G)$, by \eqref{roum} there exist constants $C,N'>0$ such that
$$
|\doublehat{\,f\,}\!(\xi,\eta)_{mn_{rs}}| \leq C \exp\{-M(N'(\langle \xi \rangle + \langle \eta \rangle)) \}.
$$
Hence
$$
|\doublehat{\, u \,}\!(\xi,\eta)_{mn_{rs}}|	\leq C_N\exp\{M(N(\langle \xi\rangle +\langle \eta \rangle))\}\exp\{-M(N'(\langle \xi \rangle + \langle \eta \rangle)) \}.
$$

From \eqref{komine}, for $N=\frac{N'}{H}$, we obtain
$$
\exp\{-M(N'(\langle \xi\rangle +\langle \eta \rangle))\} \leq A\exp\{-2M(N(\langle \xi\rangle +\langle \eta \rangle))\}.
$$
Thus
\begin{equation*}
|\doublehat{\, u \,}\!(\xi,\eta)_{mn_{rs}}|	\leq C\exp\{-M(N(\langle \xi \rangle + \langle \eta \rangle)) \}. \nonumber 
\end{equation*}
Therefore $u \in \Gamma_{\{M_k\}}(G)$. 
\end{proof}

\begin{rem}\label{GHGRH}
	If $L$ is globally hypoelliptic, then $L$ is globally $\Gamma_{\{M_k\}}$-hypoelliptic.
Indeed, by Theorem 3.3 of \cite{KMR19b}, if $L$ is globally hypoelliptic, the set $\mathcal{N}$ is finite and there exist $C,N'>0$ such that
\begin{equation*}
|\lambda_m(\xi)+a\mu_r(\eta)|\geq C (\langle \xi\rangle +\langle \eta \rangle )^{-N'}, 
\end{equation*}
for all  $[\xi] \in \widehat{G_1}, \ [\eta] \in \widehat{G_2}, \ 1 \leq m \leq d_\xi, \  1 \leq r \leq d_\eta,$ whenever $\lambda_m(\xi)+a\mu_r(\eta) \neq 0$.

By  \eqref{propM1}, for every $N>0$, there exits $C_N>0$ such that
$$
(\jp{\xi}+\jp{\eta})^{N'}\exp \{-M(N(\jp{\xi}+\jp{\eta})) \} \leq C_N.
$$
Thus,
\begin{equation*}
|\lambda_m(\xi)+a\mu_r(\eta)|\geq C_N \exp \{-M(N(\jp{\xi}+\jp{\eta})), 
\end{equation*}
for all  $[\xi] \in \widehat{G_1}, \ [\eta] \in \widehat{G_2}, \ 1 \leq m \leq d_\xi, \  1 \leq r \leq d_\eta,$ whenever $\lambda_m(\xi)+a\mu_r(\eta) \neq 0$.
By Theorem \ref{thm4} the operator $L$ is globally $\Gamma_{\{M_k\}}$-hypoelliptic.
\end{rem}

Now let us prove the Theorem \ref{solvrou} about the global $\Gamma'_{\{M_k\}}$--solvability of $L$.
\begin{prop}\label{voltasolvrou}
	Assume that for any $N>0$ there exits $C_N>0$ such that
	\begin{equation}\tag{\ref{hypothesis4}}
	|\lambda_m(\xi)+a\mu_r(\eta)|\geq C_N \exp \{-M(N (\langle \xi \rangle + \langle \eta \rangle))\}, 
	\end{equation}
	for all  $[\xi] \in \widehat{G_1}, \ [\eta] \in \widehat{G_2},  \ 1 \leq m \leq d_\xi, \ 1 \leq r \leq d_\eta$ whenever $\lambda_m(\xi)+a\mu_r(\eta) \neq 0$, then $L$ is globally $\Gamma_{\{M_k\}}$--solvable.
\end{prop}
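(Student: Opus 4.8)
\textbf{Proof plan for Proposition \ref{voltasolvrou}.} The plan is to show that $\mathcal{K}\subseteq L(\Gamma'_{\{M_k\}}(G))$, since the reverse inclusion is automatic. So I would start with an arbitrary $f\in\mathcal{K}$, i.e.\ $f\in\Gamma'_{\{M_k\}}(G)$ with $\doublehat{\,f\,}(\xi,\eta)_{mn_{rs}}=0$ whenever $\lambda_m(\xi)+a\mu_r(\eta)=0$, and construct a candidate solution $u$ by prescribing its partial Fourier coefficients via the formula forced by \eqref{fourieru}: set
\begin{equation*}
\doublehat{\,u\,}(\xi,\eta)_{mn_{rs}} := \begin{cases} \dfrac{1}{i(\lambda_m(\xi)+a\mu_r(\eta))}\,\doublehat{\,f\,}(\xi,\eta)_{mn_{rs}}, & \lambda_m(\xi)+a\mu_r(\eta)\neq 0,\\[1ex] 0, & \lambda_m(\xi)+a\mu_r(\eta)=0.\end{cases}
\end{equation*}
The two things to verify are then: (i) these coefficients are the Fourier coefficients of a genuine element $u\in\Gamma'_{\{M_k\}}(G)$, using the characterization \eqref{distrou}; and (ii) $Lu=f$, which will be immediate from \eqref{fourierLu} and the definition of the extension of $L$ to $\Gamma'_{\{M_k\}}(G)$ once $u$ is known to be a well-defined ultradistribution, because the (partial) Fourier coefficients of $Lu$ and $f$ then agree entry by entry.

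The core estimate is step (i). Since $f\in\Gamma'_{\{M_k\}}(G)$, by \eqref{distrou} for every $N>0$ there is $C_N>0$ with $|\doublehat{\,f\,}(\xi,\eta)_{mn_{rs}}|\leq C_N\exp\{M(N(\jp{\xi}+\jp{\eta}))\}$. Fix $N>0$; I want to bound $|\doublehat{\,u\,}(\xi,\eta)_{mn_{rs}}|$ by a constant times $\exp\{M(N(\jp{\xi}+\jp{\eta}))\}$. On the coefficients where $\lambda_m(\xi)+a\mu_r(\eta)=0$ there is nothing to do. On the rest, hypothesis \eqref{hypothesis4} applied with some parameter $N'$ (to be chosen) gives $|\lambda_m(\xi)+a\mu_r(\eta)|^{-1}\leq C_{N'}^{-1}\exp\{M(N'(\jp{\xi}+\jp{\eta}))\}$, so
\begin{equation*}
|\doublehat{\,u\,}(\xi,\eta)_{mn_{rs}}| \leq C_{N'}^{-1}\,C_{N''}\,\exp\{M(N'(\jp{\xi}+\jp{\eta}))\}\exp\{M(N''(\jp{\xi}+\jp{\eta}))\}
\end{equation*}
for any $N''>0$. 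Now I use (M.2), in the form \eqref{komine}, to absorb the product of two exponentials of associated functions into a single one: choosing $N'$ and $N''$ small enough (e.g.\ each comparable to $N/H$, using that $M$ is non-decreasing so that $M(N'r)+M(N''r)\le 2M(\max(N',N'')r)$ and then \eqref{komine}) one gets the sum of the two associated functions dominated by $M(N(\jp{\xi}+\jp{\eta}))+\log A$. Hence $|\doublehat{\,u\,}(\xi,\eta)_{mn_{rs}}|\leq \widetilde{C}_N\exp\{M(N(\jp{\xi}+\jp{\eta}))\}$ for every $N>0$, which by \eqref{distrou} is exactly the statement that $u\in\Gamma'_{\{M_k\}}(G)$.

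The main obstacle, and the only place requiring care, is the bookkeeping in step (i): one must keep the quantifier structure straight (the solvability hypothesis \eqref{hypothesis4} is ``for all $N$ there exists $C_N$'', and the conclusion \eqref{distrou} is also ``for all $N$ there exists $C_N$''), and correctly combine the two associated-function exponentials via (M.2)/\eqref{komine} with the right choice of constants so that the final exponent is $M(N(\cdot))$ with the prescribed $N$ and not a worse one. Everything else — that $L$ acts on $\Gamma'_{\{M_k\}}(G)$ by the stated duality, that taking partial Fourier coefficients intertwines $L$ with multiplication by $i(\lambda_m(\xi)+a\mu_r(\eta))$, and that the prescribed coefficients do define an element of $\Gamma'_{\{M_k\}}(G)$ once the growth bound holds — is already set up in Sections 2 and 3. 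For the final remark of Theorem \ref{solvrou}, that $f\in\mathcal{K}\cap\Gamma_{\{M_k\}}(G)$ yields a solution $u\in\Gamma_{\{M_k\}}(G)$, one repeats the same computation but starts from the stronger bound \eqref{roum} on $\doublehat{\,f\,}$ and ends, after the same application of \eqref{komine}, with a bound of the form $C\exp\{-M(N(\jp{\xi}+\jp{\eta}))\}$, i.e.\ $u\in\Gamma_{\{M_k\}}(G)$ by \eqref{roum} again — this is essentially the computation already carried out in Proposition \ref{voltahyporou}.
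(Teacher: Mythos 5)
Your proposal is correct and follows essentially the same route as the paper's proof: define $u$ by the forced Fourier coefficients, bound them using \eqref{hypothesis4} together with the characterization \eqref{distrou} of $f$, and merge the two exponential factors via \eqref{komine} by taking both parameters equal to $N/H$. The quantifier bookkeeping you flag as the delicate point is handled in the paper exactly as you describe, so no gap remains.
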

\begin{proof}
For each $f \in \mathcal{K}$ define
	\begin{equation}\label{solutionrou}
	\doublehat{\, u \,}\!(\xi,\eta)_{mn_{rs}} = \left\{
	\begin{array}{ll}
	0, & \mbox{if } \lambda_m(\xi)+ a \mu_r(\eta)=0, \\
	-i({\lambda_m(\xi)+ a \mu_r(\eta)})^{-1} \doublehat{\,f\,}\!(\xi,\eta)_{mn_{rs}}, & \mbox{otherwise. }
	\end{array}
	\right.
	\end{equation}
	Let us show that $\{ \doublehat{\, u \,}\!(\xi,\eta)_{mn_{rs}} \}$ is the sequence of Fourier coefficients of an element $u \in \Gamma'_{\{M_k\}}(G).$  If $\lambda_m(\xi)+ a \mu_r(\eta)\neq 0$, by \eqref{hypothesis4} we have 
	\begin{align}
	|\doublehat{\, u \,}\!(\xi,\eta)_{mn_{rs}}|
	&=|{\lambda_m(\xi)+ a \mu_r(\eta)}|^{-1} |\doublehat{\,f\,}\!(\xi,\eta)_{mn_{rs}}| \nonumber \\
	& {\leq}  C_N\exp{\{M(N(\langle \xi \rangle + \langle \eta \rangle))\}}  |\doublehat{\,f\,}\!(\xi,\eta)_{mn_{rs}}|. \nonumber 
	\end{align} 
	Using the fact that $f \in \Gamma'_{\{M_k\}}(G)$, we conclude that for all $N>0$ and $N'>0$, there exist $C_{NN'}>0$ such that
	$$
	|\doublehat{\, u \,}\!(\xi,\eta)_{mn_{rs}}| \leq C_{NN'}\exp{\{M(N(\langle \xi \rangle + \langle \eta \rangle))\}}\exp{\{M(N'(\langle \xi \rangle + \langle \eta \rangle))\}},
	$$
	for all $[\xi] \in \widehat{G_1}$, $[\eta] \in \widehat{G_2}$, $1 \leq m,n \leq d_\xi$ and $1 \leq r,s \leq d_\eta$.
	
	Let $D>0$. Choose $N=N'=\frac{D}{H}$. Using \eqref{komine} we obtain
	\begin{align*} 
	|\doublehat{\, u \,}\!(\xi,\eta)_{mn_{rs}}| &\leq C_D \exp\left\{{2M\left(\textstyle\frac{D}{H}(\langle \xi \rangle + \langle \eta \rangle)\right)}\right\}\\
	&\leq C_D \exp{\{M(D(\langle \xi \rangle + \langle \eta \rangle))\}}.
	\end{align*} 
	%{\color{red} Não sei se D é a melhor letra}
	
	Therefore $u \in \Gamma'_{\{M_k\}}(G)$ and $Lu=f$.
	\end{proof}

\begin{prop}\label{necsolrou}
	If $L$ is globally $\Gamma'_{\{M_k\}}$--solvable, then for any $N>0$ there exits $C_N>0$ such that
	\begin{equation}\tag{\ref{hypothesis4}}
	|\lambda_m(\xi)+a\mu_r(\eta)|\geq C_N \exp \{-M(N (\langle \xi \rangle + \langle \eta \rangle))\}, 
	\end{equation}
	for all  $[\xi] \in \widehat{G_1}, \ [\eta] \in \widehat{G_2},  \ 1 \leq m \leq d_\xi, \ 1 \leq r \leq d_\eta$ whenever $\lambda_m(\xi)+a\mu_r(\eta) \neq 0$. 	Moreover, for any admissible ultradifferentiable function $f \in  \Gamma_{(M_k)}(G)$,  there exists  $u \in \Gamma_{(M_k)}(G)$ such that $Lu=f$. 
\end{prop}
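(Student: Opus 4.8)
The plan is to prove the first assertion by contraposition — exactly in the spirit of Proposition \ref{nechrou2} — and then to read the ultradifferentiable solution off the formula \eqref{solutionrou}. Assume that \eqref{hypothesis4} fails: there is $N>0$ such that for every $K\in\N$ one finds $[\xi_K]\in\widehat{G_1}$, $[\eta_K]\in\widehat{G_2}$ and indices $\tilde m,\tilde r$ with
\[
0<|\lambda_{\tilde m}(\xi_K)+a\mu_{\tilde r}(\eta_K)|<\tfrac1K\exp\{-M(N(\jp{\xi_K}+\jp{\eta_K}))\}.
\]
Since only finitely many representations lie below any fixed eigenvalue bound, the left-hand side would otherwise take finitely many positive values, so after passing to a subsequence $r_K:=\jp{\xi_K}+\jp{\eta_K}\to\infty$, and we may also assume $([\xi_K],[\eta_K])\notin\mathcal N$. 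I would then define $f$ by
\[
\doublehat{\,f\,}\!(\xi_K,\eta_K)_{mn_{rs}}:=(\lambda_{\tilde m}(\xi_K)+a\mu_{\tilde r}(\eta_K))\exp\{M(Nr_K)\}
\]
for $(m,n,r,s)=(\tilde m,1,\tilde r,1)$, and $0$ for all other entries and all other pairs $([\xi],[\eta])$. Then $|\doublehat{\,f\,}\!(\xi_K,\eta_K)_{\tilde m1_{\tilde r1}}|<\tfrac1K<1$, so $f$ has bounded Fourier coefficients, hence $f\in\mathcal D'(G)\subset\Gamma'_{\{M_k\}}(G)$; and its only nonzero entry sits where $\lambda+a\mu\neq0$, so $f\in\mathcal K$.

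Next I would derive the contradiction. If $u\in\Gamma'_{\{M_k\}}(G)$ solved $Lu=f$, then \eqref{fourieru} forces $\doublehat{\,u\,}\!(\xi_K,\eta_K)_{\tilde m1_{\tilde r1}}=-i\exp\{M(Nr_K)\}$, of modulus $\exp\{M(Nr_K)\}$. Using \eqref{komine} with $q=N$ (in the equivalent form $M(Nr)\ge 2M((N/H)r)-\log A$) gives $\exp\{M(Nr_K)\}\ge A^{-1}\bigl(\exp\{M((N/H)r_K)\}\bigr)^2$, and since $M((N/H)r_K)\to\infty$ this makes the ratio $|\doublehat{\,u\,}\!(\xi_K,\eta_K)_{\tilde m1_{\tilde r1}}|/\exp\{M((N/H)r_K)\}$ tend to infinity, contradicting the characterization \eqref{distrou} of $\Gamma'_{\{M_k\}}(G)$ with parameter $N/H$. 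Hence global $\Gamma'_{\{M_k\}}$-solvability forces \eqref{hypothesis4}. This construction is the main obstacle: one must produce an $f$ that genuinely lies in $\mathcal K$, so that solvability can be invoked, while the \emph{forced} value of $\widehat u$ on the bad frequencies overflows every Roumieu growth bound — and the mechanism making both sides work at once is precisely the quadratic self-improvement of the associated function recorded in \eqref{komine}.

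For the ``moreover'' part I would take $f\in\Gamma_{(M_k)}(G)$ admissible (so $\doublehat{\,f\,}\!(\xi,\eta)_{mn_{rs}}=0$ whenever $\lambda_m(\xi)+a\mu_r(\eta)=0$) and let $u$ be given by \eqref{solutionrou}; that $Lu=f$ is immediate from \eqref{fourierLu} and admissibility. To see $u\in\Gamma_{(M_k)}(G)$, fix any $N_0>0$; on the entries with $\lambda_m(\xi)+a\mu_r(\eta)\neq0$, \eqref{hypothesis4} together with \eqref{caracbeurling} gives, for every $N'>0$,
\[
|\doublehat{\,u\,}\!(\xi,\eta)_{mn_{rs}}|\le C_{N_0}C_{N'}\exp\{M(N_0 r)-M(N'r)\},\qquad r=\jp\xi+\jp\eta,
\]
while the remaining entries vanish. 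Given a target $D>0$, choose $N'=H\max(N_0,D)$; since $M$ is non-decreasing, $M(N'r)\ge 2M((N'/H)r)-\log A\ge M(N_0r)+M(Dr)-\log A$, whence $|\doublehat{\,u\,}\!(\xi,\eta)_{mn_{rs}}|\le AC_{N_0}C_{N'}\exp\{-M(Dr)\}$; as $D$ was arbitrary, \eqref{caracbeurling} yields $u\in\Gamma_{(M_k)}(G)$. The identical computation with \eqref{roum} in place of \eqref{caracbeurling} — now choosing $N_0=N'/H$ for the fixed $N'$ furnished by \eqref{roum} — produces a solution $u\in\Gamma_{\{M_k\}}(G)$ for admissible $f\in\Gamma_{\{M_k\}}(G)$, which is the Roumieu statement recorded in Theorem \ref{solvrou}.
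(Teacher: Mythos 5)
Your proof is correct and follows essentially the same route as the paper's: a contradiction argument built from an $f\in\mathcal{K}$ concentrated on the bad frequencies (the paper simply sets the nonzero entry equal to $1$ and gets the divergence from the factor $K$, whereas you rescale by $(\lambda_{\tilde m}+a\mu_{\tilde r})\exp\{M(Nr_K)\}$ and get it from \eqref{komine}; both work), followed by the explicit Fourier solution \eqref{solutionrou} combined with \eqref{komine} for the ``moreover'' clause. The only substantive difference is that you also verify the Beurling-type version of the last assertion as literally stated, while the paper's proof treats only the Roumieu case $f\in\mathcal{K}\cap\Gamma_{\{M_k\}}(G)$ (consistent with Theorem \ref{solvrou}); your extra computation is correct and harmless.
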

\begin{proof} Suppose that is not true, then there exists $N>0$ such that for all $K\in \N $ there exist $[\xi_K] \in\widehat{G_1}$ and $[\eta_K] \in \widehat{G_2}$ satisfying
	\begin{equation}\label{condition5}
	0 < | \lambda_{\tilde{m}}(\xi_K)+ a \mu_{\tilde{r}}(\eta_K)| \leq \frac{1}{K}\exp\{-M(N(\langle \xi_K \rangle + \langle \eta_K \rangle))\},
	\end{equation}
	for some $1\leq \tilde{m} \leq d_{\xi_K}$ and $1 \leq \tilde{r} \leq d_{\xi_K}$. We can assume that $\jp{\xi_j}+\jp{\eta_j} \leq \jp{\xi_\ell} + \jp{\eta_\ell}$ when $j\leq \ell$. Consider $f\in \mathcal{K}$ defined by
	$$
	\doublehat{\,f\,}\!(\xi,\eta)_{mn_{rs}} = \left\{
	\begin{array}{ll}
	1, &  \mbox{if $([\xi],[\eta])=([\xi_j],[\eta_j])$ for some $j\in \N$ and \eqref{condition5} is satisfied}, \\
	0, & \mbox{otherwise. }
	\end{array}
	\right.
	$$
	Suppose that there exits $u \in \Gamma'_{\{M_k\}}(G)$ such that $Lu=f$. In this way, its Fourier coefficients must satisfy
	$$
	i(\lambda_m(\xi)+ a \mu_r(\eta))\doublehat{\, u \,}\!(\xi,\eta)_{mn_{rs}}=\doublehat{\,f\,}\!(\xi,\eta)_{mn_{rs}}.
	$$
	So
	\begin{align}
	|\doublehat{\, u \,}\!(\xi_K,\eta_K)_{\tilde{m}1_{\tilde{r}1}}| &= |{\lambda_{\tilde{m}}(\xi_K)+ a \mu_{\tilde{r}}(\eta_K)}|^{-1}| |\doublehat{\,f\,}\!(\xi_K,\eta_K)_{\tilde{m}1_{\tilde{r}1}}| \nonumber\\
	& \geq  K\exp\{ M(N(\langle \xi_K \rangle + \langle \eta_K \rangle))\}, \nonumber 
	\end{align}
	which, by Proposition \ref{distrou}, implies that $u \notin \Gamma'_{\{M_k\}}(G)$. Therefore $L$ is not globally solvable.

	Let us now prove the last part of the theorem. Let $f \in \mathcal{K}\cap  \Gamma_{\{M_k\}}(G)$ and define $u$ as in \eqref{solutionrou}. Since $L$ is globally $\Gamma'_{\{M_k\}}$--solvable, it holds \eqref{hypothesis4} and then 
	$$
	|\doublehat{\, u \,}\!(\xi,\eta)_{mn_{rs}}|
	\leq C_N\exp{\{M(N(\langle \xi \rangle + \langle \eta \rangle))\}}  |\doublehat{\,f\,}\!(\xi,\eta)_{mn_{rs}}|,
	$$
	for all  $[\xi] \in \widehat{G_1}, \ [\eta] \in \widehat{G_2},  1 \leq m \leq d_\xi,  1 \leq r \leq d_\eta$. By \eqref{roum}, there exist $C,N'>0$ such that
	$$
	|\doublehat{\,f\,}(\xi_,\eta)_{mn_{rs}}| \leq C\exp\{-M(N'(\jp{\xi}+\jp{\eta})) \}.
	$$
	By  \eqref{komine}, we have for $N=\frac{N'}{H}$ that 
	\begin{align*}
	|\doublehat{\, u \,}\!(\xi,\eta)_{mn_{rs}}| &\leq C\exp\left\{M\left(\tfrac{N'}{H}(\langle \xi \rangle + \langle \eta \rangle)\right)\right\} \exp\{-M(N'(\jp{\xi}+\jp{\eta})) \}\\ 
	&\leq 
	C\exp\left\{M\left(\tfrac{N'}{H}(\langle \xi \rangle + \langle \eta \rangle)\right)\right\} C\exp\left\{-2M\left(\tfrac{N'}{H}(\langle \xi \rangle + \langle \eta \rangle)\right)\right\} \\
	&\leq
	C\exp\left\{-M\left(\tfrac{N'}{H}(\langle \xi \rangle + \langle \eta \rangle)\right)\right\}
	\end{align*}
	for all  $[\xi] \in \widehat{G_1}, \ [\eta] \in \widehat{G_2},  1 \leq m \leq d_\xi,  1 \leq r \leq d_\eta$. Therefore $Lu=f$ and $u\in \Gamma_{\{M_k\}}(G)$.
\end{proof}

\begin{rem}
	Proceeding as in Remark \ref{GHGRH}, we have that if $L$ is globally $\mathcal{D}'$--solvable then $L$ is globally $\Gamma_{\{M_k\}}'$--solvable, (see Theorem 3.5 of \cite{KMR19b}).
\end{rem}
%\subsubsection*{Proof of Corollary \ref{corrou}} The first statement follows from the fact that the second condition of the Theorem \ref{thm4} is the same of the Theorem \ref{solvrou}. The proof of the second statement is similar to the proof of Corollary \ref{GHGRH}.

\section{Proofs in Komatsu classes of Beurling type}

Following a script similar to that proposed in the previous section, we dedicate this section to the proof of Theorems \ref{thm5} and \ref{solvabeur}.
Thus, in Propositions \ref{nechbeu1}, \ref{nechbeu2} and \ref{sufhbeu} we present the necessary and sufficient conditions for the operator $L$ to be globally $\Gamma_{(M_k)}$--
hypoelliptic. Next, in Propositions \ref{voltasolvbeur} and \ref{idasolvbeu} we show that the conditions presented for the operator $L$ to be globally $\Gamma'_{(M_k)}$--solvable are necessary and sufficient. We conclude this section giving a different proof that the global $\Gamma_{\{M_k\}}$--hypoellipticity of $L$ implies its global $\Gamma_{(M_k)}$--hypoellipticity by Komatsu levels.

Similarly to the Roumieu case, we have the same first necessary condition for the global $\Gamma_{(M_k)}$--hypo\-ellipticity of $L$:
\begin{prop}\label{nechbeu1}
	If $L$ is globally $\Gamma_{(M_k)}$--hypoelliptic then $\mathcal{N}$ is a finite set.
\end{prop}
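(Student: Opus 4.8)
The plan is to argue by contraposition, reproducing almost verbatim the reasoning used for Proposition~\ref{nechrou1} in the Roumieu setting. Suppose $\mathcal{N}$ is infinite. By the construction carried out in \cite{KMR19b}, this hypothesis alone yields a distribution $u\in\DG\setminus C^\infty(G)$ with $Lu=0$. It then suffices to show that this same $u$ satisfies $u\in\Gamma'_{(M_k)}(G)\setminus\Gamma_{(M_k)}(G)$ while $Lu=0\in\Gamma_{(M_k)}(G)$; these three facts contradict the global $\Gamma_{(M_k)}$--hypoellipticity of $L$.

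To see that $u\in\Gamma'_{(M_k)}(G)$, I would use that the Fourier coefficients of any element of $\DG$ grow at most polynomially, say $|\doublehat{\,u\,}\!(\xi,\eta)_{mn_{rs}}|\leq C_0(\jp{\xi}+\jp{\eta})^{N_0}$ for suitable constants $C_0,N_0>0$. By \eqref{propM1}, polynomial growth in $\jp{\xi}+\jp{\eta}$ is dominated, up to a constant, by $\exp\{M(\jp{\xi}+\jp{\eta})\}$, so the bound demanded by the characterization \eqref{distbeur} is met (with, say, $N=1$); hence $u\in\Gamma'_{(M_k)}(G)$. In fact this argument shows $\DG\subset\Gamma'_{(M_k)}(G)$, which is the expected dual counterpart of the chain $\Gamma_{(M_k)}(G)\subset\Gamma_{\{M_k\}}(G)\subset C^\infty(G)$. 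On the other hand, since $\Gamma_{(M_k)}(G)\subset C^\infty(G)$ and $u\notin C^\infty(G)$, we have $u\notin\Gamma_{(M_k)}(G)$; and $Lu=0$ trivially belongs to $\Gamma_{(M_k)}(G)$. This closes the contradiction and proves the proposition.

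I do not anticipate any real obstacle here: the analytic heart of the matter --- the explicit non-smooth solution of $Lu=0$ when $\mathcal{N}$ is infinite --- is imported from \cite{KMR19b}, and all that is added is the routine check, via \eqref{propM1} and \eqref{distbeur}, that such a $u$ lies in the Beurling ultradistribution space. If one prefers to be self-contained, the same $u$ can be produced directly: fix an injective sequence $([\xi_j],[\eta_j])_{j\in\N}\subset\mathcal{N}$, for each $j$ choose $\tilde{m}_j,\tilde{r}_j$ with $\lambda_{\tilde{m}_j}(\xi_j)+a\mu_{\tilde{r}_j}(\eta_j)=0$, put $\doublehat{\,u\,}\!(\xi_j,\eta_j)_{\tilde{m}_j 1_{\tilde{r}_j 1}}=1$ and set all remaining Fourier coefficients to zero; then $u\in\Gamma'_{(M_k)}(G)$ because its coefficients are bounded, $Lu=0$ by \eqref{fourierLu}--\eqref{image0}, and $u\notin C^\infty(G)$ because infinitely many of its coefficients fail to decay.
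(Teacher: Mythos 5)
Your argument is correct and is exactly the route the paper takes: the paper proves this proposition by remarking it follows "similarly to the Roumieu case," i.e.\ by importing from \cite{KMR19b} the non-smooth distributional solution $u$ of $Lu=0$ when $\mathcal{N}$ is infinite and observing that $u\in\Gamma'_{(M_k)}(G)\setminus\Gamma_{(M_k)}(G)$. Your verification via \eqref{propM1} and \eqref{distbeur} that $\DG\subset\Gamma'_{(M_k)}(G)$, and the self-contained construction of $u$, simply make explicit what the paper leaves implicit.
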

In the next proposition we give the second necessary condition for the global $\Gamma_{(M_k)}$--hypoellipticity of $L$.

\begin{prop}\label{nechbeu2}
	If $L$ is globally $\Gamma_{(M_k)}$--hypoelliptic, then there exist $C,N>0$ such that
	\begin{equation}\tag{\ref{hypothesis5}}
	|\lambda_m(\xi)+a\mu_r(\eta)|\geq C \exp \{-M(N (\langle \xi \rangle + \langle \eta \rangle))\}, 
	\end{equation}
	for all  $[\xi] \in \widehat{G_1}, \ [\eta] \in \widehat{G_2},  1 \leq m \leq d_\xi,  1 \leq r \leq d_\eta$ whenever $\lambda_m(\xi)+a\mu_r(\eta) \neq 0$.
\end{prop}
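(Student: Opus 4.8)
The plan is to argue by contradiction, closely following the scheme of Proposition~\ref{nechrou2}; in fact the Beurling case turns out to be somewhat simpler, because membership in $\Gamma_{(M_k)}(G)$ is tested against \emph{all} $N>0$, which leaves more room. So suppose $L$ is globally $\Gamma_{(M_k)}$--hypoelliptic while \eqref{hypothesis5} fails. Negating the quantifier ``$\exists\,C,N>0$'', for each $j\in\N$ (applying the negation with $C=1$ and $N=j$) there are $[\xi_j]\in\widehat{G_1}$, $[\eta_j]\in\widehat{G_2}$ and indices $1\le\tilde m_j\le d_{\xi_j}$, $1\le\tilde r_j\le d_{\eta_j}$ such that
$$
0<\bigl|\lambda_{\tilde m_j}(\xi_j)+a\mu_{\tilde r_j}(\eta_j)\bigr|<\exp\bigl\{-M\bigl(j(\jp{\xi_j}+\jp{\eta_j})\bigr)\bigr\}.
$$

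The first step — and the one I expect to be the main obstacle — is to arrange, after passing to a subsequence, that $\jp{\xi_j}+\jp{\eta_j}\to\infty$ and that the pairs $([\xi_j],[\eta_j])$ are pairwise distinct. If $\jp{\xi_j}+\jp{\eta_j}$ stayed bounded along a subsequence, then, since only finitely many $[\phi]$ have $\jp\phi$ below a given bound, some pair $([\xi_*],[\eta_*])$ together with some indices $(\tilde m_*,\tilde r_*)$ would recur infinitely often; but then the fixed positive number $|\lambda_{\tilde m_*}(\xi_*)+a\mu_{\tilde r_*}(\eta_*)|$ would be $<\exp\{-M(j(\jp{\xi_*}+\jp{\eta_*}))\}$ for infinitely many $j$, and the right-hand side tends to $0$ — a contradiction. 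Passing to a further subsequence and relabelling (which only strengthens the displayed inequality, since the index increases) we may assume the pairs are distinct and $\jp{\xi_j}+\jp{\eta_j}$ is strictly increasing.

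Next I would build the data. Define $f$ by
$$
\doublehat{\,f\,}\!(\xi,\eta)_{mn_{rs}}=
\begin{cases}
\lambda_{\tilde m_j}(\xi_j)+a\mu_{\tilde r_j}(\eta_j), & ([\xi],[\eta])=([\xi_j],[\eta_j]),\ (m,n,r,s)=(\tilde m_j,1,\tilde r_j,1),\\
0,& \text{otherwise},
\end{cases}
$$
which lies in $\mathcal K$ because its only nonzero entries occur where $\lambda_m(\xi)+a\mu_r(\eta)\ne0$. Given $N>0$, for $j\ge N$ the displayed estimate and monotonicity of $M$ give $|\doublehat{\,f\,}\!(\xi_j,\eta_j)_{\tilde m_j1_{\tilde r_j1}}|\le\exp\{-M(N(\jp{\xi_j}+\jp{\eta_j}))\}$, while the finitely many indices $j<N$ are absorbed into a constant $C_N$; hence $f\in\Gamma_{(M_k)}(G)$ by \eqref{caracbeurling}. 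Then set $\doublehat{\,u\,}\!(\xi,\eta)_{mn_{rs}}:=0$ when $\lambda_m(\xi)+a\mu_r(\eta)=0$ and $\doublehat{\,u\,}\!(\xi,\eta)_{mn_{rs}}:=\frac1{i(\lambda_m(\xi)+a\mu_r(\eta))}\doublehat{\,f\,}\!(\xi,\eta)_{mn_{rs}}$ otherwise; with this $f$ one gets $\doublehat{\,u\,}\!(\xi_j,\eta_j)_{\tilde m_j1_{\tilde r_j1}}=-i$ and $\doublehat{\,u\,}\!(\xi,\eta)_{mn_{rs}}=0$ in every other position. Since $|\doublehat{\,u\,}\!(\xi,\eta)_{mn_{rs}}|\le1$, the growth condition in \eqref{distbeur} holds trivially, so $u\in\Gamma'_{(M_k)}(G)$ (in fact $u\in\DG$), and \eqref{fourierLu} gives $Lu=f$.

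Finally, since $|\doublehat{\,u\,}\!(\xi_j,\eta_j)_{\tilde m_j1_{\tilde r_j1}}|=1$ for every $j$ while $\jp{\xi_j}+\jp{\eta_j}\to\infty$ and $M(s)\to\infty$ as $s\to\infty$, the characterization \eqref{caracbeurling} cannot hold for $u$, i.e.\ $u\notin\Gamma_{(M_k)}(G)$. Thus $u\in\Gamma'_{(M_k)}(G)$ and $Lu=f\in\Gamma_{(M_k)}(G)$ but $u\notin\Gamma_{(M_k)}(G)$, contradicting the global $\Gamma_{(M_k)}$--hypoellipticity of $L$; therefore \eqref{hypothesis5} must hold. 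I would remark that, in contrast with the Roumieu argument, neither \eqref{komine} nor \eqref{propM1} is needed in this construction — the freedom to use every $N>0$ in \eqref{caracbeurling} makes the estimate on $\doublehat{\,f\,}$ immediate, so the only genuinely delicate point remains the subsequence extraction in the second step.
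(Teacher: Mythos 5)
Your argument is correct and is essentially the paper's own proof: both construct, from the failure of \eqref{hypothesis5}, a sequence of distinct pairs $([\xi_j],[\eta_j])$ and an ultradistribution $u$ with unit-modulus Fourier coefficients at those frequencies (the paper takes the value $1$ where you take $-i$, and defines $f=Lu$ rather than $f$ first), so that $Lu\in\Gamma_{(M_k)}(G)$ while $u\notin\Gamma_{(M_k)}(G)$. The only difference is that you spell out the subsequence extraction guaranteeing $\jp{\xi_j}+\jp{\eta_j}\to\infty$ and pairwise distinctness, which the paper dismisses as ``easy to see''; your justification of that point is correct.
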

\begin{proof}
	Let us prove the result by contradiction. Let us assume that \eqref{hypothesis5} is not satisfied, then for every $K \in \N$, we can choose a $[\xi_K] \in  \widehat{G_1}$ and a $[\eta_K] \in \widehat{G_2}$ such that
	\begin{equation}\label{condition4}
	0 < | \lambda_{\tilde{m}}(\xi_K)+ a \mu_{\tilde{r}}(\eta_K)| \leq \exp\{-M(K(\langle \xi_K \rangle + \langle \eta_K \rangle))\},
	\end{equation}
	for some $1\leq \tilde{m} \leq d_{\xi_K}$ and $1 \leq \tilde{r} \leq d_{\xi_K}$. We can assume that $\jp{\xi_j}+\jp{\eta_j} \leq \jp{\xi_\ell} + \jp{\eta_\ell}$ when $j\leq \ell$. 
	
	Let $\mathcal{A}=\{([\xi_j],[\eta_j])\}_{j \in \N}$. It is easy to see that $\mathcal{A}$ has infinitely many elements. Define 
	$$\doublehat{\, u \,}\!(\xi,\eta)_{mn_{rs}} = \left\{
	\begin{array}{ll}
	1, & \mbox{if $([\xi],[\eta])=([\xi_j],[\eta_j])$ for some $j\in \N$ and \eqref{condition4} is satisfied} , \\
	0, & \mbox{otherwise.}
	\end{array}
	\right.
	$$
	By \eqref{caracbeurling} and \eqref{distbeur}, it is easy to see that $u \in \Gamma'_{(M_k)}(G)\backslash \Gamma_{(M_k)}(G)$. Let us show  that ${Lu=f \in \Gamma_{(M_k)}(G)}$.
	
	If  $([\xi],[\eta]) \neq ([\xi_j],[\eta_j])$ for any $j\in \N$ then $\doublehat{\,f\,}\!(\xi,\eta)=0$. In the other hand, for every $K \in \N$, we have
	\begin{align}
	|\doublehat{\,f\,}\!(\xi_K,\eta_K)_{\tilde{m}1_{\tilde{r}1}}|	&=|\lambda_{\tilde{m}}(\xi_K)+a \mu_{\tilde{r}}(\eta_K)||\doublehat{\, u \,}\!(\xi_K,\eta_K)_{\tilde{m}1_{\tilde{r}1}}|\nonumber\\
	&\leq \exp\{-M(K(\langle \xi_K\rangle +\langle \eta_K \rangle))\}\nonumber
	\end{align}
	Therefore $Lu=f \in \Gamma_{(M_k)}(G)$, which contradicts the hypothesis.
\end{proof}

\begin{prop}\label{sufhbeu}
	Assume that $\mathcal{N}$ is a finite set and there exist $C,N>0$ such that
	\begin{equation}\tag{\ref{hypothesis5}}
	|\lambda_m(\xi)+a\mu_r(\eta)|\geq C \exp \{-M(N (\langle \xi \rangle + \langle \eta \rangle))\}, 
	\end{equation}
	for all  $[\xi] \in \widehat{G_1}, \ [\eta] \in \widehat{G_2},  1 \leq m \leq d_\xi,  1 \leq r \leq d_\eta$ whenever $\lambda_m(\xi)+a\mu_r(\eta) \neq 0$, then $L$ is globally $\Gamma_{(M_k)}$--hypoelliptic.
\end{prop}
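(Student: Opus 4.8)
The plan is to adapt the proof of Proposition \ref{voltahyporou} to the Beurling setting, the only real change being the order of the quantifiers dictated by \eqref{caracbeurling} and \eqref{distbeur}. Suppose $u\in\Gamma'_{(M_k)}(G)$ satisfies $Lu=f\in\Gamma_{(M_k)}(G)$. Since $\mathcal{N}$ is finite, it is enough to estimate $\doublehat{\,u\,}\!(\xi,\eta)_{mn_{rs}}$ for $([\xi],[\eta])\notin\mathcal{N}$ (the remaining finitely many coefficients are immaterial for \eqref{caracbeurling}). For such pairs every $\lambda_m(\xi)+a\mu_r(\eta)$ is nonzero and, exactly as in \eqref{fourierLu}--\eqref{fourieru},
\begin{equation*}
\doublehat{\,u\,}\!(\xi,\eta)_{mn_{rs}}=-i\,\big(\lambda_m(\xi)+a\mu_r(\eta)\big)^{-1}\,\doublehat{\,f\,}\!(\xi,\eta)_{mn_{rs}}.
\end{equation*}

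Let $C,N>0$ be the constants furnished by the hypothesis \eqref{hypothesis5}, so that $|\lambda_m(\xi)+a\mu_r(\eta)|^{-1}\le C^{-1}\exp\{M(N(\jp{\xi}+\jp{\eta}))\}$ whenever the denominator does not vanish. To prove $u\in\Gamma_{(M_k)}(G)$ I would fix an arbitrary $D>0$ and verify the decay estimate of \eqref{caracbeurling} for this $D$. Put $N':=H\max\{N,D\}$; since $f\in\Gamma_{(M_k)}(G)$, applying \eqref{caracbeurling} with the exponent $N'$ produces $C_{N'}>0$ with $|\doublehat{\,f\,}\!(\xi,\eta)_{mn_{rs}}|\le C_{N'}\exp\{-M(N'(\jp{\xi}+\jp{\eta}))\}$. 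Substituting into the displayed identity gives, for all $([\xi],[\eta])\notin\mathcal{N}$ and all admissible indices,
\begin{equation*}
|\doublehat{\,u\,}\!(\xi,\eta)_{mn_{rs}}|\le C^{-1}C_{N'}\,\exp\{M(N(\jp{\xi}+\jp{\eta}))\}\,\exp\{-M(N'(\jp{\xi}+\jp{\eta}))\}.
\end{equation*}

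The remaining point is to see that the loss $\exp\{M(N(\cdot))\}$ is dominated and still leaves the decay $\exp\{-M(D(\cdot))\}$. Writing $r:=\jp{\xi}+\jp{\eta}$ and using that $M$ is non-decreasing, $M(Nr)+M(Dr)\le 2M(\max\{N,D\}r)$; and the squared form of \eqref{komine} (equivalently, the inequality $M(Hs)\ge 2M(s)-\log A$ that follows from (M.2)), applied with $s=\max\{N,D\}r$, yields $2M(\max\{N,D\}r)\le M(N'r)+\log A$. Hence $M(Nr)-M(N'r)\le\log A-M(Dr)$, so
\begin{equation*}
|\doublehat{\,u\,}\!(\xi,\eta)_{mn_{rs}}|\le A\,C^{-1}C_{N'}\,\exp\{-M(D(\jp{\xi}+\jp{\eta}))\}
\end{equation*}
for all $([\xi],[\eta])\notin\mathcal{N}$. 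As $D>0$ was arbitrary, \eqref{caracbeurling} shows $u\in\Gamma_{(M_k)}(G)$. The step I expect to be the main obstacle is precisely this last quantifier bookkeeping: unlike the Roumieu case, the hypothesis supplies only one exponent $N$, so one must use that $f$ decays at every Komatsu rate and choose that rate $N'$ large relative to both the fixed $N$ and the target $D$, and then invoke the (M.2)-convexity of the associated function to absorb the loss and manufacture the required gain simultaneously.
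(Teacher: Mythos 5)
Your proposal is correct and follows essentially the same route as the paper: invert the symbol off the finite set $\mathcal{N}$, use the single pair $(C,N)$ from \eqref{hypothesis5} together with the fact that $f$ decays at every Komatsu rate, and absorb the fixed loss $\exp\{M(N(\jp{\xi}+\jp{\eta}))\}$ via \eqref{komine} to recover decay at an arbitrary rate $D$. The only difference is cosmetic: the paper splits into the cases $N\le D$ and $N>D$ before choosing $N'=DH$ or $N'=NH$, whereas you unify both by taking $N'=H\max\{N,D\}$ from the start; the quantifier bookkeeping you flagged as the main obstacle is handled correctly.
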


\begin{proof} 
	Suppose $Lu=f \in \Gamma_{(M_k)}(G)$ for some $u \in \Gamma'_{(M_k)}(G)$.  Since $\mathcal{N}$ is finite, it is enough to study the behavior of $\widehat{u}(\xi,\eta)_{mn_{rs}}$ outside of $\mathcal{N}$. If $([\xi],[\eta]) \notin \mathcal{N}$, then
	$$
	\doublehat{\, u \,}\!(\xi,\eta)_{mn_{rs}} = -i( \lambda_m (\xi) + a \mu_r(\eta)^{-1} \doublehat{\,f\,}\!(\xi,\eta)_{mn_{rs}},
	$$
	for all $1 \leq m \leq d_\xi$ and $1 \leq r \leq d_\eta$. Thus
	\begin{align}
	|\doublehat{\, u \,}\!(\xi,\eta)_{mn_{rs}}| &= |{\lambda_m(\xi)+ a \mu_r(\eta)}|^{-1} |\doublehat{\,f\,}\!(\xi,\eta)_{mn_{rs}}| \nonumber \\
	&\leq C\exp\{M(N(\langle \xi\rangle +\langle \eta \rangle))\} |\doublehat{\,f\,}\!(\xi,\eta)_{mn_{rs}}|. \nonumber 
	\end{align}
	
	Since $f \in\Gamma_{(M_k)}(G)$, for every $N'>0$, there exists $C_{N'}>0$ such that
	$$
	\|\doublehat{\,f\,}\!(\xi,\eta)\|_{\HS} \leq C_{N'} \exp\{-M(N'(\langle \xi \rangle + \langle \eta \rangle)) \}.
	$$
	Hence
	$$
	|\doublehat{\, u \,}\!(\xi,\eta)_{mn_{rs}}| \leq C_{N'}\exp\{M(N(\langle \xi\rangle +\langle \eta \rangle))\}\exp\{-M(N'(\langle \xi \rangle + \langle \eta \rangle)) \}.
	$$
	Fix $D>0$. If $N \leq D$, then
	$$
	\exp\{M(N(\langle \xi\rangle +\langle \eta \rangle))\}\leq \exp\{M(D(\langle \xi\rangle +\langle \eta \rangle))\},
	$$
	for all  $[\xi] \in \widehat{G_1}, \ [\eta] \in \widehat{G_2}$, because $M$ is a non-decreasing function, as well the exponential. So
	$$
	|\doublehat{\, u \,}\!(\xi,\eta)_{mn_{rs}}|  \leq C_{N'}\exp\{M(D(\langle \xi\rangle +\langle \eta \rangle))\}\exp\{-M(N'(\langle \xi \rangle + \langle \eta \rangle)) \}.
	$$
	Choose $N'= DH$. By \eqref{komine} we have  
	$$
	\exp\{\textstyle- M(DH(\langle \xi \rangle + \langle \eta \rangle))\} \leq 
	A\exp\{-2M(D(\langle \xi \rangle + \langle \eta \rangle))\}.
	$$
	Thus
	\begin{align}
	|\doublehat{\, u \,}\!(\xi,\eta)_{mn_{rs}}|  &\leq C_D \exp\{M(D(\langle \xi\rangle +\langle \eta \rangle))\}\exp\{-M(DH(\langle \xi \rangle + \langle \eta \rangle)) \} \nonumber \\ &\leq C_{D}\exp\{M(D(\langle \xi\rangle +\langle \eta \rangle))\}\exp\{-2M(D(\langle \xi \rangle + \langle \eta \rangle)) \} \nonumber \\
	&\leq  C_D\exp\{-M(D(\langle \xi\rangle +\langle \eta \rangle))\}. \nonumber 
	\end{align}
	
	If $N>D$, choose $N'=NH$. Again by \eqref{komine}, 
	$$
	\exp\{\textstyle- M(NH(\langle \xi \rangle + \langle \eta \rangle))\} \leq 
	A\exp\{-2M(N(\langle \xi \rangle + \langle \eta \rangle))\}.
	$$
	So
	\begin{align}
	|\doublehat{\, u \,}\!(\xi,\eta)_{mn_{rs}}|  &\leq C\exp\{M(N(\langle \xi\rangle +\langle \eta \rangle))\}\exp\{-M(NH(\langle \xi \rangle + \langle \eta \rangle)) \}\nonumber \\
	&\leq C\exp\{M(N(\langle \xi\rangle +\langle \eta \rangle))\}\exp\{-2M(N(\langle \xi \rangle + \langle \eta \rangle)) \} \nonumber \\
	&\leq  C\exp\{-M(N(\langle \xi\rangle +\langle \eta \rangle))\} \nonumber \\
	& \leq C\exp\{-M(D(\langle \xi\rangle +\langle \eta \rangle))\}. \nonumber 
	\end{align}
	
	Hence, for every $D>0$, there exists $C_D>0$ such that
	$$
	|\doublehat{\, u \,}\!(\xi,\eta)_{mn_{rs}}|  \leq C_D\exp\{-M(D(\langle \xi \rangle + \langle \eta \rangle)),
	$$
	for all $([\xi],[\eta])\notin\mathcal{N}$. Therefore $u \in \Gamma_{(M_k)}(G)$. 
	
\end{proof} 

The next propositions are intended to help the proof of Theorem \ref{solvabeur}.
\begin{prop}\label{voltasolvbeur}
	Assume that there exist $C,N>0$ such that
	\begin{equation}\tag{\ref{hypothesis5}}
	|\lambda_m(\xi)+a\mu_r(\eta)|\geq C \exp \{-M(N (\langle \xi \rangle + \langle \eta \rangle))\}, 
	\end{equation}
	for all  $[\xi] \in \widehat{G_1}, \ [\eta] \in \widehat{G_2},  1 \leq m \leq d_\xi,  1 \leq r \leq d_\eta$ whenever $\lambda_m(\xi)+a\mu_r(\eta) \neq 0$, then $L$ is globally $\Gamma'_{(M_k)}$--solvable. 	
\end{prop}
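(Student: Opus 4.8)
The plan is to mirror the Roumieu-type argument from Proposition~\ref{voltasolvrou}, adapting the constants to the Beurling setting. Given $f \in \mathcal{K}$, define its candidate primitive $u$ by the same formula
\begin{equation*}
\doublehat{\, u \,}\!(\xi,\eta)_{mn_{rs}} = \left\{
\begin{array}{ll}
0, & \mbox{if } \lambda_m(\xi)+ a \mu_r(\eta)=0, \\
-i({\lambda_m(\xi)+ a \mu_r(\eta)})^{-1} \doublehat{\,f\,}\!(\xi,\eta)_{mn_{rs}}, & \mbox{otherwise,}
\end{array}
\right.
\end{equation*}
so that formally $Lu = f$ by \eqref{fourierLu} and \eqref{image0} (here we use $f \in \mathcal{K}$ to guarantee that the entries killed by the first case are exactly the ones where $\doublehat{\,f\,}$ vanishes). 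The whole content of the proof is then to check that this sequence of matrix coefficients actually defines an element of $\Gamma'_{(M_k)}(G)$, i.e. that it satisfies the growth bound in \eqref{distbeur}.

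First I would estimate, on the entries where $\lambda_m(\xi)+a\mu_r(\eta) \neq 0$, using \eqref{hypothesis5}:
$$
|\doublehat{\, u \,}\!(\xi,\eta)_{mn_{rs}}| = |\lambda_m(\xi)+ a \mu_r(\eta)|^{-1} |\doublehat{\,f\,}\!(\xi,\eta)_{mn_{rs}}| \leq C^{-1}\exp\{M(N(\jp{\xi}+\jp{\eta}))\}\, |\doublehat{\,f\,}\!(\xi,\eta)_{mn_{rs}}|.
$$
Since $f \in \Gamma'_{(M_k)}(G)$, by \eqref{distbeur} there exist $N', C'>0$ with $|\doublehat{\,f\,}\!(\xi,\eta)_{mn_{rs}}| \leq C' \exp\{M(N'(\jp{\xi}+\jp{\eta}))\}$. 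Combining these, and using that $M$ is non-decreasing so that $M(N(\jp{\xi}+\jp{\eta})), M(N'(\jp{\xi}+\jp{\eta})) \le M(\widetilde N(\jp{\xi}+\jp{\eta}))$ for $\widetilde N = \max\{N,N'\}$, we get
$$
|\doublehat{\, u \,}\!(\xi,\eta)_{mn_{rs}}| \leq C^{-1}C' \exp\{2M(\widetilde N(\jp{\xi}+\jp{\eta}))\} \le C^{-1}C' \exp\{M(\widetilde N' (\jp{\xi}+\jp{\eta}))\}
$$
for a suitable $\widetilde N'$, where the last step uses \eqref{komine} in the form $2M(q\jp{\phi}) \le M(q'\jp{\phi}) + \log A$ applied with $\phi = \xi\otimes\eta$ and $\jp{\xi\otimes\eta} \simeq \jp{\xi}+\jp{\eta}$ (equivalently, arguing directly from the convexity properties of $M$). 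On the entries in the first case the coefficient is $0$, so the bound trivially holds there too. Hence $\{\doublehat{\, u \,}\!(\xi,\eta)_{mn_{rs}}\}$ satisfies \eqref{distbeur}, so $u \in \Gamma'_{(M_k)}(G)$, and by construction $Lu = f$; since $f\in\mathcal{K}$ was arbitrary, $\mathcal{K} \subseteq L(\Gamma'_{(M_k)}(G))$, and the reverse inclusion is automatic, giving global $\Gamma'_{(M_k)}$-solvability.

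I do not expect a serious obstacle here: the argument is essentially the Roumieu proof with the quantifier order on the constants reversed, and the only point requiring a little care is bookkeeping the passage from $2M(q\,\cdot)$ to $M(q'\,\cdot)$ via \eqref{komine} — making sure the constants produced are of the ``$\exists N, \exists C$'' shape demanded by \eqref{distbeur} rather than the ``$\forall N$'' shape of the Roumieu distribution characterization \eqref{distrou}. A minor additional remark worth including is that, combined with the companion necessity statement (the Beurling analogue of Proposition~\ref{necsolrou}), this yields the stated ``moreover'' part: if $f \in \mathcal{K}\cap\Gamma_{(M_k)}(G)$, then the same $u$ defined above in fact lies in $\Gamma_{(M_k)}(G)$, by running the estimate with \eqref{caracbeurling} in place of \eqref{distbeur} and noting that the ``$\forall N'$'' freedom in \eqref{caracbeurling} absorbs the fixed loss coming from \eqref{hypothesis5}.
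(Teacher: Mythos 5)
Your proposal is correct and follows essentially the same route as the paper's proof: the same explicit definition of $\doublehat{\,u\,}$, the same estimate via \eqref{hypothesis5} and \eqref{distbeur}, and the same passage from $\exp\{2M(D(\jp{\xi}+\jp{\eta}))\}$ to $\exp\{M(DH(\jp{\xi}+\jp{\eta}))\}$ using \eqref{komine} after taking the maximum of the two constants. The closing remark about the ``moreover'' part is not needed here (the paper places it in the necessity proposition), but it is consistent with the paper's treatment.
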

\begin{proof} For each $f \in \mathcal{K}$ define
	\begin{equation*}\label{solutionbe}
	\doublehat{\, u \,}\!(\xi,\eta)_{mn_{rs}} = \left\{
	\begin{array}{ll}
	0, & \mbox{if } \lambda_m(\xi)+ a \mu_r(\eta)=0, \\
	-i({\lambda_m(\xi)+ a \mu_r(\eta)})^{-1} \doublehat{\,f\,}\!(\xi,\eta)_{mn_{rs}}, & \mbox{otherwise. }
	\end{array}
	\right.
	\end{equation*}
	Let us show that $\{ \doublehat{\, u \,}\!(\xi,\eta)_{mn_{rs}} \}$ is the sequence of Fourier coefficients of an element $u \in \Gamma'_{(M_k)}(G).$ We have by hypothesis that
	\begin{align}
	|\doublehat{\, u \,}\!(\xi,\eta)_{mn_{rs}}| &=  |{\lambda_m(\xi)+ a \mu_r(\eta)}|^{-1}|\doublehat{\,f\,}\!(\xi,\eta)_{mn_{rs}}|\nonumber \\
	& \leq  C \exp{\{M(N(\langle \xi \rangle + \langle \eta \rangle))\}}  |\doublehat{\,f\,}\!(\xi,\eta)_{mn_{rs}}|. \nonumber
	\end{align} 
	Using the fact that $f \in \Gamma'_{(M_k)}(G)$, we conclude that there exist $C,\, N'>0$ such that
	$$
	|\doublehat{\, u \,}\!(\xi,\eta)_{mn_{rs}}| \leq C\exp{\{M(N(\langle \xi \rangle + \langle \eta \rangle))\}}\exp{\{M(N'(\langle \xi \rangle + \langle \eta \rangle))\}},
	$$
	for all $[\xi] \in \widehat{G_1}$, $[\eta] \in \widehat{G_2}$, $1 \leq m,n\leq d_\xi$ and $1 \leq r,s \leq d_\eta$. Let $D=\max\{B,N\}$, so
	$$
	|\doublehat{\, u \,}\!(\xi,\eta)_{mn_{rs}}| \leq C\exp{\{2M(D(\langle \xi \rangle + \langle \eta \rangle))\}} \leq C\exp{\{M(\widetilde{D}(\langle \xi \rangle + \langle \eta \rangle))\}}, 
	$$
	where $\widetilde{D}=DH$. Therefore $u \in \Gamma'_{(M_k)}(G)$ and $Lu=f$.

\end{proof}	
\begin{prop}\label{idasolvbeu}
	If $L$ is globally $\Gamma'_{(M_k)}$--solvable, then there exist $C,N>0$ such that
	\begin{equation}\tag{\ref{hypothesis5}}
	|\lambda_m(\xi)+a\mu_r(\eta)|\geq C \exp \{-M(N (\langle \xi \rangle + \langle \eta \rangle))\}, 
	\end{equation}
	for all  $[\xi] \in \widehat{G_1}, \ [\eta] \in \widehat{G_2},  1 \leq m \leq d_\xi,  1 \leq r \leq d_\eta$ whenever $\lambda_m(\xi)+a\mu_r(\eta) \neq 0$. Moreover, for any admissible ultradifferentiable function $f \in  \Gamma_{(M_k)}(G)$,  there exists  $u \in \Gamma_{(M_k)}(G)$ such that $Lu=f$. 
\end{prop}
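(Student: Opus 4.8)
The plan is to mirror the proof of Proposition~\ref{necsolrou}, replacing the Roumieu characterizations by the Beurling ones \eqref{caracbeurling} and \eqref{distbeur}; the argument splits into two independent parts. For the first part, the necessity of \eqref{hypothesis5}, I would argue by contradiction. If \eqref{hypothesis5} fails, then taking $C=1$ and $N=K$ in its negation, for every $K\in\N$ there are $[\xi_K]\in\widehat{G_1}$, $[\eta_K]\in\widehat{G_2}$ and indices $\tilde m_K,\tilde r_K$ with
$$
0<|\lambda_{\tilde m_K}(\xi_K)+a\mu_{\tilde r_K}(\eta_K)|\leq \exp\{-M(K(\jp{\xi_K}+\jp{\eta_K}))\}.
$$
For a fixed pair the right-hand side tends to $0$ as $K\to\infty$, while the finitely many values $|\lambda_m(\xi)+a\mu_r(\eta)|$ are either $0$ or bounded below; hence each pair occurs for only finitely many $K$, the numbers $\jp{\xi_K}+\jp{\eta_K}$ are unbounded, and after passing to a subsequence I may assume the $([\xi_K],[\eta_K])$ are pairwise distinct with $t_K:=\jp{\xi_K}+\jp{\eta_K}\to\infty$, the above bound still holding. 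Now put $\doublehat{\,f\,}\!(\xi,\eta)_{mn_{rs}}:=1$ if $([\xi],[\eta])=([\xi_K],[\eta_K])$ and $(m,n,r,s)=(\tilde m_K,1,\tilde r_K,1)$ for some $K$, and $0$ otherwise. Since $|\doublehat{\,f\,}|\leq1$, $f\in\Gamma'_{(M_k)}(G)$, and $f\in\mathcal K$ because its nonzero entries lie where $\lambda_m(\xi)+a\mu_r(\eta)\neq0$. By hypothesis there is $u\in\Gamma'_{(M_k)}(G)$ with $Lu=f$; then \eqref{fourierLu} uniquely forces the component $\doublehat{\,u\,}\!(\xi_K,\eta_K)_{\tilde m_K1_{\tilde r_K1}}$ and gives $|\doublehat{\,u\,}\!(\xi_K,\eta_K)_{\tilde m_K1_{\tilde r_K1}}|=|\lambda_{\tilde m_K}(\xi_K)+a\mu_{\tilde r_K}(\eta_K)|^{-1}\geq\exp\{M(Kt_K)\}$. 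On the other hand \eqref{distbeur} yields fixed $N_0,C_0>0$ with $|\doublehat{\,u\,}|\leq C_0\exp\{M(N_0 t)\}$ for all $t=\jp{\xi}+\jp{\eta}$, so at $([\xi_K],[\eta_K])$ with $K>N_0H$, using \eqref{komine} in the form $\exp\{-M(Kt_K)\}\leq A\exp\{-2M((K/H)t_K)\}\leq A\exp\{-2M(N_0t_K)\}$, we obtain $\exp\{M(N_0t_K)\}\leq AC_0$ for all such $K$, which is impossible since $M(N_0t_K)\to\infty$. Hence \eqref{hypothesis5} holds.

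For the second part, the regularity statement for admissible data, given $f\in\mathcal K\cap\Gamma_{(M_k)}(G)$ I would define $u$ exactly as in Proposition~\ref{voltasolvbeur}: $\doublehat{\,u\,}\!(\xi,\eta)_{mn_{rs}}:=0$ when $\lambda_m(\xi)+a\mu_r(\eta)=0$ and $:=-i(\lambda_m(\xi)+a\mu_r(\eta))^{-1}\doublehat{\,f\,}\!(\xi,\eta)_{mn_{rs}}$ otherwise, which satisfies $Lu=f$ by construction. Using \eqref{hypothesis5}, which is now available from the first part, one gets $|\doublehat{\,u\,}|\leq C^{-1}\exp\{M(Nt)\}\,|\doublehat{\,f\,}|$, and then, for an arbitrary prescribed $D>0$, I would apply \eqref{caracbeurling} to $f$ with the auxiliary parameter $N'=\max\{D,N\}H$ and split into the cases $N\leq D$ and $N>D$ as in the proof of Proposition~\ref{sufhbeu}, letting \eqref{komine} absorb the factor $\exp\{2M(\cdot)\}$; this produces $|\doublehat{\,u\,}|\leq C_D\exp\{-M(Dt)\}$. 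Since $D$ is arbitrary, \eqref{caracbeurling} gives $u\in\Gamma_{(M_k)}(G)$.

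I expect the first part to be the main obstacle: one must check that the counterexample $f$ genuinely belongs to $\Gamma'_{(M_k)}(G)$ while every preimage leaves it, which is precisely where the ``single $N_0$'' form of the Beurling characterization \eqref{distbeur} (in contrast to the ``all $N$'' form used for Roumieu distributions) enters, together with the doubling inequality \eqref{komine} and the divergence $t_K\to\infty$ that comes from the distinctness of the selected pairs. The second part is routine, being essentially a line-by-line adaptation of Proposition~\ref{sufhbeu} once \eqref{hypothesis5} is secured.
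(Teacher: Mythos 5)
Your proposal is correct and follows essentially the same route as the paper: negate \eqref{hypothesis5} along a sequence, build $f\in\mathcal{K}$ with unit Fourier coefficients at the bad frequencies, and show any preimage violates the Beurling ultradistribution bound \eqref{distbeur}, then handle the regularity part by the explicit division formula as in Proposition \ref{voltasolvbeur}. The only cosmetic difference is that the paper instantiates the negation with the constant $\tfrac{1}{K}$ (so the factor $K$ immediately defeats any fixed $C_0\exp\{M(N_0 t)\}$), whereas your choice $C=1$ forces the slightly longer but equally valid detour through \eqref{komine} and the divergence $t_K\to\infty$.
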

\begin{proof}
	Suppose that is not true, then for any $K\in \N$  there exist $[\xi_{K}] \in\widehat{G_1}$ and $[\eta_{K}] \in \widehat{G_2}$ satisfying
	\begin{equation}\label{condition6}
	0 <| \lambda_{\tilde{m}}(\xi_{K})+ a \mu_{\tilde{r}}(\eta_{K})| \leq \frac{1}{K}\exp\{-M(K(\langle \xi_{K} \rangle + \langle \eta_{K} \rangle))\},
	\end{equation}
	for some $1\leq \tilde{m} \leq d_{\xi_{K}}$ and $1 \leq \tilde{r} \leq d_{\xi_{K}}$.  We can assume that $\jp{\xi_j}+\jp{\eta_j} \leq \jp{\xi_\ell} + \jp{\eta_\ell}$ when $j\leq \ell$.
	
	Define 
	$$\doublehat{\, f \,}\!(\xi,\eta)_{mn_{rs}} = \left\{
	\begin{array}{ll}
	1, & \mbox{if $([\xi],[\eta])=([\xi_j],[\eta_j])$ for some $j\in \N$ and \eqref{condition6} is satisfied} , \\
	0, & \mbox{otherwise.}
	\end{array}
	\right.
	$$
	Notice that $f \in\mathcal{K}$. If $Lu=f$ for some $u \in \Gamma'_{\{M_k\}}(G)$, then
	$$
	\doublehat{\, u \,}\!(\xi_{K},\eta_{K})_{\tilde{m}1_{\tilde{r}1}} = -i(\lambda_{\tilde{m}}(\xi_{K})+ a \mu_{\tilde{r}}(\eta_{K}))^{-1} \doublehat{\,f\,}\!(\xi_{K},\eta_{K})_{\tilde{m}1_{\tilde{r}1}}.
	$$
	So
	\begin{align}
	|\doublehat{\, u \,}\!(\xi_{K},\eta_{K})_{\tilde{m}1_{\tilde{r}1}}|
	& =   |\lambda_{\tilde{m}}(\xi_{K})+ a \mu_{\tilde{r}}(\eta_{K})|^{-1}  |\doublehat{\,f\,}\!(\xi_{K},\eta_{K})_{\tilde{m}1_{\tilde{r}1}}| \nonumber \\
	& \geq K \exp\{ M(K(\langle \xi_{K} \rangle + \langle \eta_{K} \rangle))\}, \nonumber 
	\end{align}
	which implies that $u \notin \Gamma'_{(M_k)}(G)$, a contradiction. 

	The proof of the last part of the theorem is analogous to the proof of Proposition \ref{necsolrou} and so its proof is omitted.
	
	\end{proof}
 
 We can summarize the connection between the different notions of global hypoellipticity and global solvability of constant coefficients vector fields in the following diagram:
 \begin{equation}\label{diagram}
 \begin{array}{ccccc}
 GH & \Longrightarrow & G\Gamma_{\{M_k\}}H & \Longrightarrow & G\Gamma_{(M_k)}H\\
 \Big\Downarrow & & \Big\Downarrow & & \Big\Downarrow\\
 GS & \Longrightarrow & G\Gamma'_{\{M_k\}}S & \Longrightarrow & G\Gamma'_{(M_k)}S
 \end{array}
 \end{equation}

We can prove that the global $\Gamma_{\{M_k\}}(G)$--hypoellipticity of the operator $L$  implies its global $\Gamma_{(M_k)}$--hypoellipticity using what we will call Komatsu levels.
\begin{defi}
	Let $\{M_k\}_{k \in \N}$ be a sequence satisfying the conditions (M.0)--(M.3') and let $N>0$. The Komatsu Level $N$ of ultradifferentiable functions $\Gamma^N_{M_k}(G)$ is the space of $C^\infty$ functions $f$ on $G$ such that there exists $C>0$ satisfying
	$$
	\|\widehat{f}(\phi)\|_{\HS} \leq C \exp\{-M(N\jp{\phi}) \},
	$$
	for all $[\phi] \in \widehat{G}$, $1 \leq i,j \leq d_\phi$.
\end{defi}
Notice that this definition is independent of the choice of the representative of $[\phi] \in \widehat{G}$. Moreover, we have 
\begin{equation}\label{level}
\Gamma_{\{M_k\}}(G) = \bigcup_{N>0} \Gamma_{M_k}^N(G) \quad \quad \mbox{and} \quad \quad  \Gamma_{(M_k)}(G) = \bigcap_{N>0} \Gamma_{M_k}^N(G).
\end{equation}

Let us investigate how the operator $L$ acts on Komatsu levels. For $u\in \Gamma_{M_k}^N(G)$, we obtain from \eqref{fourierLu}
$$
\doublehat{\,Lu\,}\!(\xi,\eta)_{mn_{rs}}=i(\lambda_m(\xi)+a \mu_r(\eta))\doublehat{\, u\,}\!(\xi,\eta)_{mn_{rs}}.
$$
By \eqref{symbol}, we have $|\lambda_m(\xi)| \leq \jp{\xi}$ and $|\mu_r(\eta)| \leq \jp{\eta}$, so we have
$$
\|\doublehat{\,Lu\,}\!(\xi,\eta)\|_{\HS} \leq C (\jp{\xi}+\jp{\eta})\|\doublehat{\,u\,}\!(\xi,\eta)\|_{\HS}.
$$
By \eqref{propM1}, there exists $C>0$ such that $\jp{\xi}+\jp{\eta} \leq C\exp\{\tfrac{1}{2}M(N(\jp{\xi}+\jp{\eta})) \}$. Using now \eqref{komine}, we obtain
$$
\|\doublehat{\,Lu\,}\!(\xi,\eta)\|_{\HS} \leq C\exp\left\{-M\left(\tilde{N}(\jp{\xi}+\jp{\eta})\right) \right\},
$$
where $\tilde{N}=\frac{N}{H}$, which implies that $Lu \in \Gamma_{M_k}^{\tilde{N}}(G)$.

Assume that $L$ is globally $\Gamma_{\{M_k\}}$--hypoelliptic. In the proof of Proposition \ref{voltahyporou} we showed that if $Lu \in \Gamma^N_{M_k}(G)$, then $u \in \Gamma^{\tilde{N}}_{M_k}(G)$, where $\tilde{N} = \frac{N}{H}$. 
Let us prove that $L$ is globally $\Gamma_{(M_k)}(G)$--hypoelliptic. If $Lu \in \Gamma_{(M_k)}(G)$, by  \eqref{level} we get $Lu \in \Gamma_{M_k}^N(G)$, for all $N>0$ and then $u\in \Gamma_{M_k}^{\tilde{N}}(G)$, for all $\tilde{N}>0$. Therefore $u\in \Gamma_{(M_k)}(G)$ and $L$ is globally $\Gamma_{(M_k)}$--hypoelliptic.

We also can prove that global $\Gamma_{\{M_k\}}$--solvability implies global $\Gamma_{(M_k)}$--solvability for the operator $L$ using Komatsu levels of ultradistributions.
\begin{defi}
	Let $\{M_k\}_{k \in \N}$ be a sequence satisfying the conditions (M.0), (M.1), (M.2) and (M.3') and $N>0$. The  Komatsu Level $N$ of ultradistributions $\Gamma^{'N}_{M_k}(G)$ is the space of linear functionals $u$ such that there exists $C>0$ satisfying
	$$
	\|\widehat{u}(\phi)\|_{\HS} \leq C \exp\{M(N\jp{\phi}) \},
	$$
	for all $[\phi] \in \widehat{G}$, $1 \leq i,j \leq d_\phi$.
\end{defi}
Similarly, we have
\begin{equation}\label{level2}
\Gamma'_{\{M_k\}}(G) = \bigcap_{N>0} \Gamma_{M_k}^{'N}(G) \quad \quad \mbox{and} \quad \quad  \Gamma'_{(M_k)}(G) = \bigcup_{N>0} \Gamma_{M_k}^{'N}(G).
\end{equation}

Suppose that $L$ is globally $\Gamma_{\{M_k\}}'$--solvable. In the proof of Proposition \ref{voltasolvrou} we showed that if $f$ is an admissible ultradistribution and $f\in\Gamma^{'N}_{M_k}(G)$, then there exists $u\in \Gamma^{'\tilde{N}}_{M_k}(G)$ such that $Lu=f$, where $\tilde{N}=NH$. 

Let us prove that $L$ is globally $\Gamma_{(M_k)}'$--solvable. Let $f\in \Gamma'_{(M_k)}(G)$ an admissible ultradistribution. By \eqref{level2}, $f\in\Gamma_{M_k}^{'N}(G)$ for some $N>0$ and then there exists $u\in \Gamma^{'\tilde{N}}_{M_k}(G)$ such that $Lu=f$, where $\tilde{N}=NH$. Therefore $L$ is globally $\Gamma_{(M_k)}'$--solvable.

\section{Low-order perturbations}\label{low-order-pert}

We can characterize the global hypoellipticity and global solvability of the operator
$$
L=X+q,
$$
where $X\in \mathfrak{g}$ and $q\in \C$, on Komatsu classes, both Roumieu and Beurling type, similarly to the vector field case. We say that $L_q$ is globally $\Gamma_{\{M_k\}}(G)$--solvable if $L_q(\Gamma_{\{M_k\}}(G)) = \mathcal{K}_q$, where $$\mathcal{K}_q:=\{w \in \Gamma_{\{M_k\}}(G); \ {\widehat{w}}(\xi)_{mn}=0, \textrm{whenever } \lambda_m(\xi)-iq=0 \}.$$
Analogously we define the global $\Gamma_{(M_k)}(G)$--solvability of $L_q$.
\begin{thm}\label{pert}
	The operator $L_q=X+q$ is globally $\Gamma_{(M_k)}$-hypoelliptic (respectively, globally $\Gamma_{\{M_k\}}$-hypoelliptic) if and only if the following conditions hold:
	\begin{enumerate}[1.]
		\item The set
		$$
		\mathcal{N}=\{[\xi]\in \widehat{G}; \lambda_m(\xi)-iq = 0, \mbox{ for some } 1 \leq m \leq d_\xi \}
		$$
		is finite.
		\item $\exists N>0$ (respectively, $\forall N>0$) and $\exists C>0$ such that
		$$
		|\lambda_m(\xi)-iq| \geq C \exp\{-M(N\jp{\xi}) \},
		$$
		for all $[\xi] \in \widehat{G}$, $1\leq m \leq d_\xi$, whenever $\lambda_m(\xi)-iq\neq0$.
	\end{enumerate}
	Moreover, the operator $L_q$ is globally $\Gamma_{(M_k)}$-solvable (respectively, globally $\Gamma_{\{M_k\}}$-solvable) if and only if Condition 2 above is satisfied.
\end{thm}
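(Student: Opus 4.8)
The plan is to reduce everything to the Fourier side, exactly as in Sections~4 and 5, the only changes being that we now work on the single group $G$ (so $\jp{\xi}+\jp{\eta}$ is replaced by $\jp{\phi}$ and there is no product structure) and that the diagonal symbol $i(\lambda_m(\xi)+a\mu_r(\eta))$ is replaced by the diagonal symbol of $L_q=X+q$. Concretely, for each $[\xi]\in\widehat G$ choose a representative with $\sigma_X(\xi)_{mn}=i\lambda_m(\xi)\delta_{mn}$; then $\sigma_{L_q}(\xi)_{mn}=(i\lambda_m(\xi)+q)\delta_{mn}$, so that for $u$ in $\Gamma'_{\{M_k\}}(G)$ or $\Gamma'_{(M_k)}(G)$, extended by duality as before, one has $\widehat{L_qu}(\xi)_{mn}=(i\lambda_m(\xi)+q)\,\widehat u(\xi)_{mn}$. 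Since $i\lambda_m(\xi)+q=i(\lambda_m(\xi)-iq)$ we have $|i\lambda_m(\xi)+q|=|\lambda_m(\xi)-iq|$, and $i\lambda_m(\xi)+q=0$ if and only if $\lambda_m(\xi)-iq=0$; this is why $\mathcal N$ and Condition~2 in the statement are phrased through $\lambda_m(\xi)-iq$. Throughout, membership of $f$ in a Komatsu class is used via the single-group Fourier characterization $\|\widehat f(\phi)\|_{\HS}\le C\exp\{-M(N\jp\phi)\}$ (and its Beurling and ultradistribution analogues), i.e. through the Komatsu levels $\Gamma_{M_k}^N(G)$ and $\Gamma_{M_k}^{'N}(G)$ from the end of Section~5 together with \eqref{level} and \eqref{level2}.

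\emph{Necessity.} If $\mathcal N$ is infinite, pick for each $[\xi]\in\mathcal N$ an index $m(\xi)$ with $i\lambda_{m(\xi)}(\xi)+q=0$ and set $\widehat u(\xi)_{mn}=1$ when $[\xi]\in\mathcal N$, $m=m(\xi)$, $n=1$, and $\widehat u(\xi)_{mn}=0$ otherwise. These coefficients are bounded, so $u\in\DG\subset\Gamma'_{\{M_k\}}(G)\cap\Gamma'_{(M_k)}(G)$; the nonzero coefficients sit exactly where the symbol vanishes, so $L_qu=0$; and $\|\widehat u(\xi)\|_{\HS}=1$ along the infinitely many $[\xi]\in\mathcal N$, along which $\jp\xi\to\infty$, so $u\notin C^\infty(G)$ and hence $u$ is in neither Komatsu class. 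Thus Condition~1 is necessary for global hypoellipticity. For Condition~2, argue by contradiction as in Propositions~\ref{nechrou2} and \ref{nechbeu2}: if it fails there is a sequence $([\xi_K])$ and indices $\tilde m$ with $0<|i\lambda_{\tilde m}(\xi_K)+q|<\tfrac1K\exp\{-M(N\jp{\xi_K})\}$ (in the Roumieu case for some fixed $N$ and all $K$, in the Beurling case with $N=K$); put $\widehat f(\xi_K)_{\tilde m 1}=(i\lambda_{\tilde m}(\xi_K)+q)\jp{\xi_K}$ and zero otherwise, so that by \eqref{propM1} and \eqref{komine} $f$ lies in the relevant Komatsu class, while any $u$ with $L_qu=f$ must satisfy $|\widehat u(\xi_K)_{\tilde m 1}|=\jp{\xi_K}$, giving a polynomially bounded $u\in\DG\setminus C^\infty(G)$ that is not ultradifferentiable — contradicting hypoellipticity. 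The necessity of Condition~2 for solvability is the same construction with $\widehat f(\xi_K)_{\tilde m1}=1$ (note $f\in\mathcal K_q$): the forced solution satisfies $|\widehat u(\xi_K)_{\tilde m1}|\ge K\exp\{M(N\jp{\xi_K})\}$, so no element of $\Gamma'_{\{M_k\}}(G)$ (resp. $\Gamma'_{(M_k)}(G)$) solves $L_qu=f$, exactly as in Propositions~\ref{necsolrou} and \ref{idasolvbeu}.

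\emph{Sufficiency.} Assume Conditions~1 and 2. If $L_qu=f$ lies in the Komatsu class and $u$ is an ultradistribution, then outside the finite set $\mathcal N$ we have $\widehat u(\xi)_{mn}=(i\lambda_m(\xi)+q)^{-1}\widehat f(\xi)_{mn}$, so Condition~2 gives $\|\widehat u(\xi)\|_{\HS}\le C_N\exp\{M(N\jp\xi)\}\,\|\widehat f(\xi)\|_{\HS}$; inserting the decay of $\widehat f$ and balancing the two $M$-terms with \eqref{komine}, choosing the parameters as in Propositions~\ref{voltahyporou} and \ref{sufhbeu}, yields $\|\widehat u(\xi)\|_{\HS}\le C\exp\{-M(N'\jp\xi)\}$, so $u$ lies in the class and $L_q$ is globally hypoelliptic. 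For solvability, given admissible $f$ define $\widehat u(\xi)_{mn}=0$ when $i\lambda_m(\xi)+q=0$ and $\widehat u(\xi)_{mn}=(i\lambda_m(\xi)+q)^{-1}\widehat f(\xi)_{mn}$ otherwise; Condition~2 together with \eqref{komine} shows these are the Fourier coefficients of an element of $\Gamma'_{\{M_k\}}(G)$ (resp. $\Gamma'_{(M_k)}(G)$) with $L_qu=f$, exactly as in Propositions~\ref{voltasolvrou} and \ref{voltasolvbeur}; and if moreover $f\in\mathcal K_q$ is ultradifferentiable, the same estimate with the sharper decay of $\widehat f$ places $u$ back in the Komatsu class. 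Equivalently, one may run this at the level of Komatsu levels: since $|i\lambda_m(\xi)+q|\le\|X\|\jp\xi+|q|\le C\jp\xi$ by \eqref{symbol}, the computation preceding Section~6 gives $L_q\colon\Gamma_{M_k}^N(G)\to\Gamma_{M_k}^{N/H}(G)$, and the inversion above gives, on admissible data, $\Gamma_{M_k}^N(G)\to\Gamma_{M_k}^{NH}(G)$ and $\Gamma_{M_k}^{'N}(G)\to\Gamma_{M_k}^{'NH}(G)$; intersecting and unioning over $N>0$ through \eqref{level} and \eqref{level2} transfers the Roumieu statements to the Beurling ones.

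\emph{On the main difficulty.} There is no genuine obstacle: the proof is a transcription of Sections~4--5, and the only technical content is the bookkeeping with the constants $H,A$ and the associated function $M$ via \eqref{propM1} and \eqref{komine}. The one point worth isolating is that the symbol $i\lambda_m(\xi)+q$ is now genuinely complex: when $\operatorname{Re}q\ne0$ one has $|i\lambda_m(\xi)+q|\ge|\operatorname{Re}q|>0$ for all $[\xi]$ and $m$, so $\mathcal N=\varnothing$ and Condition~2 holds trivially, and hence $L_q$ is automatically globally hypoelliptic and globally solvable in every Komatsu class; the substance of the theorem is therefore the case $q\in i\R$.
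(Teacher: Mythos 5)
Your proposal is correct and follows exactly the route the paper intends: the paper omits the proof of Theorem \ref{pert}, stating only that it is ``similar to the vector field case,'' and your argument is precisely that transcription of Propositions \ref{nechrou1}--\ref{necsolrou} and \ref{nechbeu1}--\ref{idasolvbeu} to the single-group diagonal symbol $i\lambda_m(\xi)+q=i(\lambda_m(\xi)-iq)$. The closing observations (the Komatsu-level mapping properties and the fact that $\operatorname{Re}q\neq 0$ trivializes both conditions) are correct and consistent with the material at the end of Section 5.
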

The proof is similar to the vector field case and it will be omitted. We also have \eqref{diagram} for this case.

The next step for the study of low order perturbations is to consider the operator $L_q:= X+q$,  where $q \in \Gamma_{\{M_k\}}(G)$. The idea is to establish a connection between the global hypoellipticity and the global solvability in Komatsu sense of $L_q$ and $L_{q_0} = X+q_0$, where $q_0$ is the average of $q$ in $G$.

 In \cite{Ber94}, Bergamasco proved that the operator 
 $$
 L_q = \partial_t+ a\partial_x+q,
 $$
 where $a \in \mathbb{R}$ is an irrational non-Liouville number and $q \in C^\infty(\mathbb{T}^2)$, is globally hypoelliptic if and only if the operator  $L_{q_{0}}=\partial_t+a\partial_x+q_{0}$ is globally hypoelliptic, where $q_{0}=\int_{\mathbb{T}^2} q(t,x)\, dxdt$. The key to make this connection is the fact that $L_q \circ e^{-Q} = e^{-Q} \circ L_{q_0}$, where $Q\in C^\infty(\mathbb{T}^2)$ satisfies $(\partial_t+a\partial_x)Q=q-q_0$. The existence of such $Q$ is guaranteed by the global hypoellipticity of the operator $\partial_t+a\partial_x$.
 
For the study of the operator $L=X+q$, with $q\in \Gamma_{\{M_k\}}(G)$, we can not assume the global hypoellipticity of $X$ in view of the Greenfield-Wallach's conjecture. Hence, we will assume as hypothesis that there exists $Q \in \Gamma_{\{M_k\}}(G)$ such that
$$
XQ=q-q_0,
$$
where $q_0=\int_{G}q(x) \, dx$. 

From Proposition 5.8 of \cite{KMR19b}, we have that  
\begin{equation}\label{normallow}
L_q \circ e^{-Q} = e^{-Q} \circ L_{q_0}.
\end{equation}

Henceforth, we will assume that the sequence $\{M_k\}_{_{k\in\N_0}}$ satisfies the following additional condition:
\begin{description}
	\item[(M.4)] $\displaystyle\frac{M_r}{r!} \frac{M_s}{s!} \leq \frac{M_{r+s}}{(r+s)!}, \quad \forall r,s \in \N_0.$
\end{description}
\begin{lemma}\label{autoexp}
If $f \in \Gamma_{\{M_k\}}(G)$, then $e^{f} \in \Gamma_{\{M_k\}}(G)$.
\end{lemma}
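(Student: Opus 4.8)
The plan is to show directly that the derivatives of $g := e^f$ satisfy the Roumieu-type estimate $\|\partial^\alpha g\|_{L^2(G)} \le C h^{|\alpha|} M_{|\alpha|}$. The natural route is Fa\`a di Bruno's formula: for a multi-index $\alpha$, the derivative $\partial^\alpha(e^f)$ is a finite sum, over set partitions of the "positions" in $\alpha$, of terms of the form $e^f \prod_{j} \partial^{\beta_j} f$, where the $\beta_j$ are the (nonempty) blocks of the partition and $\sum_j \beta_j = \alpha$. Since $G$ is compact and $f\in\Gamma_{\{M_k\}}(G)$ is in particular bounded, $\|e^f\|_{L^\infty(G)}$ is some finite constant, so it suffices to bound $\sum \prod_j \|\partial^{\beta_j} f\|_{L^\infty(G)}$, using the $L^\infty$ characterization of the Komatsu class mentioned right after the first definition.

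**Next I would** insert the hypothesis on $f$. There exist $h_0, C_0>0$ with $\|\partial^\beta f\|_{L^\infty} \le C_0 h_0^{|\beta|} M_{|\beta|}$ for all $\beta$. For a partition of $\alpha$ into blocks $\beta_1,\dots,\beta_p$ with $|\beta_j|=k_j\ge 1$ and $\sum k_j = |\alpha|=:k$, the product is bounded by $C_0^p\, h_0^{k}\, \prod_j M_{k_j}$. This is where condition (M.4) enters: rewriting $\prod_j M_{k_j} = \left(\prod_j \frac{M_{k_j}}{k_j!}\right)\left(\prod_j k_j!\right)$ and applying (M.4) repeatedly (by induction on $p$) gives $\prod_j \frac{M_{k_j}}{k_j!} \le \frac{M_k}{k!}$, hence $\prod_j M_{k_j} \le M_k \cdot \frac{\prod_j k_j!}{k!} = M_k \cdot \binom{k}{k_1,\dots,k_p}^{-1}$. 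So each partition term contributes at most $C_0^p h_0^k M_k / \binom{k}{k_1,\dots,k_p}$.

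**Then** I would perform the combinatorial bookkeeping. Summing over all set partitions of a $k$-element set with a prescribed block-size multiset $(k_1,\dots,k_p)$: the number of such partitions is exactly $\binom{k}{k_1,\dots,k_p}$ divided by the symmetry factor for equal block sizes, which is at most $\binom{k}{k_1,\dots,k_p}$; multiplied by the per-term bound this cancels the multinomial and leaves $C_0^p h_0^k M_k$. Then $C_0^p \le (\max\{C_0,1\})^p \le (\max\{C_0,1\})^k$, since $p\le k$. Finally the number of ordered compositions $(k_1,\dots,k_p)$ of $k$ (over all $p$) is $2^{k-1}$. Collecting everything: $\|\partial^\alpha(e^f)\|_{L^\infty} \le \|e^f\|_{L^\infty}\cdot 2^{k} (\max\{C_0,1\})^{k} h_0^{k} M_k =: C h^{|\alpha|} M_{|\alpha|}$ with $h = 2h_0\max\{C_0,1\}$, which is precisely the Roumieu condition, so $e^f\in\Gamma_{\{M_k\}}(G)$.

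**The main obstacle** I expect is purely organizational rather than deep: keeping the combinatorial constants honest in Fa\`a di Bruno's formula on a manifold (one should either work in local coordinates on a finite atlas, or use a basis of left-invariant vector fields so that "$\partial^\alpha$" means an iterated product of such fields — the paper's convention — in which case the noncommutativity only reorders factors and does not affect the counting). The one genuinely substantive ingredient is condition (M.4), which is exactly what makes $\{M_k/k!\}$ submultiplicative and thereby tames the product $\prod_j M_{k_j}$; without it the naive bound would lose a factor that grows too fast. The Beurling-type analogue (not needed here but worth noting) would follow verbatim, replacing "there exist $h_0,C_0$" by "for every $h_0$ there exists $C_0$" and tracking that $h=2h_0\max\{C_0,1\}$ can be made arbitrarily small only if one is careful — which is why the lemma is stated for the Roumieu class.
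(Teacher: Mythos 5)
Your proposal is correct and follows essentially the same route as the paper's proof: Fa\`a di Bruno's formula for $\partial^\alpha e^{f}$, the observation that condition (M.4) makes $\{M_k/k!\}$ submultiplicative so that $\binom{p}{\lambda}\prod_j M_{\lambda_j}\leq M_p$, and a combinatorial count of the partition terms producing the harmless factor $2^{|\alpha|}$ (the paper evaluates the sum over integer partitions exactly as $\sum_{k}\binom{p-1}{k-1}C^k/k!\leq 2^p e^C$, while you over-count via ordered compositions, but both yield the same $C(2h)^{|\alpha|}M_{|\alpha|}$ bound). No gap.
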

\begin{proof}
	By the characterization of ultradifferentiable function of Roumieu type, there exist $C,h>0$ such that
	$$
	|\partial^\alpha f(x)| \leq Ch^{|\alpha|}M_{|\alpha|},
	$$
	for all $\alpha \in \N_0^d$, $x\in G$.
	
	Let $\alpha \in \N_0^d$ such that $|\alpha|=p$. 
%	We will write $\partial^\alpha = Y_1\cdot Y_2\cdot ... \cdot Y_p$, where $Y_j \in \{X_1, \cdots, X_2\}$. 
	We have that
	$$
	|\partial^\alpha e^{f(x)}| \leq |e^{f(x)}|h^p \sum_{k=1}^p C^k\left( \sum_{\lambda \in \Delta(p,k)}\binom{p}{\lambda} \frac{1}{r(\lambda)!}\prod_{j=1}^k M_{\lambda_j} \right),
	$$
	where $\Delta(p,k) = \{\lambda\in\N^k; |\lambda|=p \mbox{ and } \lambda_1\geq \cdots \geq \lambda_k \geq 1 \}$ and $r(\lambda) \in \N_0^d$, where $r(\lambda)_j$ counts how many times $j$ appears on $\lambda$. For example, $\lambda=(2,2,1,1) \in \Delta(6,4)$ and $r(\lambda)=(2,2,0,0,0,0)$. Since $\binom{p}{\lambda} = \frac{p!}{\lambda_1! \cdots \lambda_k!}
	$, by property (M.4) we obtain
	$$
	\binom{p}{\lambda}\prod_{j=1}^k M_{\lambda_j}  = p! \prod_{j=1}^k \frac{M_{\lambda_j}}{\lambda_j!} \leq p! \frac{M_{|\lambda|}}{|\lambda|!} = M_p.
	$$
Then
	$$
		|\partial^\alpha e^{f(x)}| \leq K h^p M_p \sum_{k=1}^pC^k\sum_{\lambda \in \Delta(p,k)}\frac{1}{r(\lambda)!}.
$$

	We have that 
	$$
	\sum_{k=1}^pC^k\sum_{\lambda \in \Delta(p,k)}\frac{1}{r(\lambda)!} = \sum_{k=1}^p \binom{p-1}{k-1}\frac{C^k}{k!}.
	$$ 

Therefore,
$	\sum\limits_{k=1}^pC^k\sum\limits_{\lambda \in \Delta(p,k)}\frac{1}{r(\lambda)!} \leq 2^p e^C$
and we obtain
$$
|\partial^\alpha e^{f(x)}| \leq K (2h)^p M_p,
$$
which implies that $e^f \in \Gamma_{\{M_k\}}(G)$.
\end{proof}
\begin{rem}
With a slight modification in the above proof it is possible to show that $e^f \in \Gamma_{(M_k)}(G)$ whenever $f\in \Gamma_{(M_k)}(G)$.
\end{rem}
From Lemma \ref{autoexp}, we obtain that $e^Q v \in \Gamma_{\{M_k\}}(G)$, whenever $v\in \Gamma_{\{M_k\}}(G)$. Moreover, for $u \in \Gamma'_{\{M_k\}}(G)$, we also have $e^Qu \in \Gamma'_{\{M_k\}}(G)$. The equality \eqref{normallow} motivates us to define the global $\Gamma'_{\{M_k\}}$--solvability of $L_q$ as:

\begin{defi}\label{definitionsolv}
	Let $G$ be a compact Lie group, $X\in \mathfrak{g}$, and $Q\in \Gamma_{\{M_k\}}(G)$. We say that the operator $$L_q=X+q,$$ where $XQ=q-q_0$, $q_0=\int_G q(x)\, dx$,  is globally $\Gamma'_{\{M_k\}}$--solvable if  $L_{q}(\DG) = \mathcal{J}_{q}$, where
		$$
		\mathcal{J}_q := \{v \in \Gamma'_{\{M_k\}}(G); \ e^Q v \in \mathcal{K}_{q_0} \}.
		$$
\end{defi}

\begin{prop}\label{connection}
	Let $G$ be a compact Lie group and consider the operator
	$
	L=X+q,
	$
	where $X\in \mathfrak{g}$ and $q\in \Gamma_{\{M_k\}}(G)$. Assume that there exists $Q \in \Gamma_{\{M_k\}}(G)$ satisfying $XQ=q-q_0,
	$
	where $q_0=\int_{G}q(x) \, dx$. The operator $L_{q}$ is globally $\Gamma_{\{M_k\}}$--hypoelliptic  if and only if $L_{q_0}$ is globally $\Gamma_{\{M_k\}}$--hypoelliptic. Moreover, the operator $L_{q}$ is globally $\Gamma_{\{M_k\}}$--solvable if and only if $L_{q_0}$ is globally $\Gamma_{\{M_k\}}$--solvable.
\end{prop}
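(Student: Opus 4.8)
The plan is to conjugate $L_q$ into $L_{q_0}$ by the operator of multiplication by $e^{Q}$, exactly as in Section 5 of \cite{KMR19b}, but now working inside the Komatsu class. Since $Q\in\Gamma_{\{M_k\}}(G)\subset C^\infty(G)$, the identity \eqref{normallow} holds as an identity of operators on $C^\infty(G)$, and can be rewritten as $L_{q_0}\circ e^{Q}=e^{Q}\circ L_q$; by transposition (each factor preserving $\Gamma_{\{M_k\}}(G)$) it extends to $\Gamma'_{\{M_k\}}(G)$. By Lemma \ref{autoexp} both $e^{Q}$ and $e^{-Q}$ lie in $\Gamma_{\{M_k\}}(G)$, and since $\Gamma_{\{M_k\}}(G)$ is an algebra (the estimate $M_{|\beta|}M_{|\alpha|-|\beta|}\le M_{|\alpha|}$, a consequence of (M.0) and (LC), combined with the Leibniz rule, gives the product bound), multiplication by $e^{Q}$ is a continuous bijection of $\Gamma_{\{M_k\}}(G)$ onto itself with inverse multiplication by $e^{-Q}$; the same holds on $\Gamma'_{\{M_k\}}(G)$. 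All of this runs verbatim in the Beurling setting, using the remark following Lemma \ref{autoexp}.

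For the hypoellipticity equivalence, assume first $L_{q_0}$ is globally $\Gamma_{\{M_k\}}$--hypoelliptic and let $u\in\Gamma'_{\{M_k\}}(G)$ with $L_q u\in\Gamma_{\{M_k\}}(G)$. Set $v:=e^{Q}u\in\Gamma'_{\{M_k\}}(G)$. Then $L_{q_0}v=e^{Q}L_q u$ is a product of two elements of $\Gamma_{\{M_k\}}(G)$, hence belongs to $\Gamma_{\{M_k\}}(G)$; global $\Gamma_{\{M_k\}}$--hypoellipticity of $L_{q_0}$ forces $v\in\Gamma_{\{M_k\}}(G)$, and therefore $u=e^{-Q}v\in\Gamma_{\{M_k\}}(G)$. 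The converse is symmetric: reading \eqref{normallow} as $L_q\circ e^{-Q}=e^{-Q}\circ L_{q_0}$, if $L_q$ is globally $\Gamma_{\{M_k\}}$--hypoelliptic and $w\in\Gamma'_{\{M_k\}}(G)$ has $L_{q_0}w\in\Gamma_{\{M_k\}}(G)$, then $u:=e^{-Q}w$ satisfies $L_q u=e^{-Q}L_{q_0}w\in\Gamma_{\{M_k\}}(G)$, so $u\in\Gamma_{\{M_k\}}(G)$ and $w=e^{Q}u\in\Gamma_{\{M_k\}}(G)$.

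For the solvability equivalence, I would first record the two set identities $e^{Q}\big(\mathcal{J}_q\cap\Gamma_{\{M_k\}}(G)\big)=\mathcal{K}_{q_0}$, immediate from the definition of the admissible set (the $\Gamma_{\{M_k\}}$--analogue of $\mathcal{J}_q$ in Definition \ref{definitionsolv}) together with the fact that $e^{\pm Q}$ are mutually inverse automorphisms of $\Gamma_{\{M_k\}}(G)$, and $e^{Q}\,L_q(\Gamma_{\{M_k\}}(G))=L_{q_0}(\Gamma_{\{M_k\}}(G))$, which follows from $e^{Q}L_q=L_{q_0}e^{Q}$ and the surjectivity of $e^{Q}$ on $\Gamma_{\{M_k\}}(G)$. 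Applying the automorphism $e^{Q}$ to the defining equality $L_q(\Gamma_{\{M_k\}}(G))=\mathcal{J}_q\cap\Gamma_{\{M_k\}}(G)$ of global $\Gamma_{\{M_k\}}$--solvability of $L_q$ yields $L_{q_0}(\Gamma_{\{M_k\}}(G))=\mathcal{K}_{q_0}$, i.e. global $\Gamma_{\{M_k\}}$--solvability of $L_{q_0}$; since $e^{Q}$ is invertible the implication reverses. Replacing $\{M_k\}$ by $(M_k)$ gives the Beurling case.

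I expect the substantive difficulties to be bookkeeping rather than analysis: making the definition of global $\Gamma_{\{M_k\}}$--solvability of the function-coefficient operator $L_q$ precise (the correct admissible set being the $\Gamma_{\{M_k\}}$--version of $\mathcal{J}_q$ from Definition \ref{definitionsolv}), verifying that multiplication by $e^{\pm Q}$ is well defined and continuous both on $\Gamma_{\{M_k\}}(G)$ and on $\Gamma'_{\{M_k\}}(G)$, and checking that \eqref{normallow}, established in \cite{KMR19b} at the $C^\infty$ level, persists on $\Gamma'_{\{M_k\}}(G)$ by transposition. None of these steps requires any condition on $\{M_k\}$ beyond (M.0)--(M.4), which are already assumed.
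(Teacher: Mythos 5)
Your proposal is correct and follows essentially the same route as the paper: conjugation of $L_q$ into $L_{q_0}$ via multiplication by $e^{\pm Q}$ using the identity \eqref{normallow} and Lemma \ref{autoexp}, with the hypoellipticity and solvability equivalences obtained by transporting $u$, $f$ and the admissible sets through this conjugation. The only cosmetic difference is that you package the solvability direction as set identities applied to the automorphism $e^{Q}$, whereas the paper chases individual elements, and the bookkeeping issues you flag (which space the admissible sets live in) are in fact present in the paper's own formulation.
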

\begin{proof}
	Suppose that $L_q$ is globally $\Gamma_{\{M_k\}}(G)$--hypoelliptic. If $L_{q_0}u = f \in \Gamma_{\{M_k\}}(G)$ for some $u\in \Gamma'_{\{M_k\}}(G)$, then
	$e^{-Q}L_{q_0}u = e^{-Q} f \in \Gamma_{\{M_k\}}(G)$. Since $ e^{-Q} \circ L_{q_0}=L_{q} \circ e^{-Q},$ we have
	$L_q(e^{-Q} u) \in \Gamma_{\{M_k\}}(G)$ and by global $\Gamma_{\{M_k\}}(G)$--hypoellipticity of $L_{q}$ we have $e^{-Q}u \in \Gamma_{\{M_k\}}(G)$, which implies that $u\in \Gamma_{\{M_k\}}(G)$ and then $L_{q_0}$ is globally $\Gamma_{\{M_k\}}(G)$--hypoelliptic.
	
	Assume now that $L_{q_0}$ is globally $\Gamma_{\{M_k\}}(G)$--hypoelliptic. If $L_q u = f \in \Gamma_{\{M_k\}}(G)$ for some $u\in \Gamma'_{\{M_k\}}(G)$, we can write $L_q (e^{-Q} e^Q u) = f \in \Gamma_{\{M_k\}}(G)$. By the fact that $L_{q} \circ e^{-Q} = e^{-Q} \circ L_{q_0} $ we obtain $e^{-Q} L_{q_0}(e^Q u) = f$, that is, $L_{q_0}(e^Q u) = e^Q f \in \Gamma_{\{M_k\}}(G)$. By global $\Gamma_{\{M_k\}}(G)$--hypoellipticity of $L_{q_0}$ we have that $e^Q u \in \Gamma_{\{M_k\}}(G)$ and then $u \in \Gamma_{\{M_k\}}(G)$.

	Assume that $L_q$ is globally solvable and let $f \in \mathcal{K}_{q_0}$. Let us show that there exists $u\in \Gamma'_{\{M_k\}}(G)$ such that $L_{q_0}u=f$. We can write $f=e^{Q}e^{-Q}f$, so $e^{-Q }f \in \mathcal{J}_{q}$. Since $L_q$ is globally solvable, there exists $v \in \Gamma'_{\{M_k\}}(G)$ such that $L_q v = e^{-Q }f $. we can write $v=e^{-Q} e^Q v$ and then $L_q (e^{-Q} e^Q v) = e^{-Q }f $. By \eqref{normallow}, we have
	$$
	e^{-Q}L_{q_0}e^Q v = L_q (e^{-Q} e^Q v) = e^{-Q }f,
	$$
	that is, $L_{q_0}e^Q v=f$.
	
	Suppose now that $L_{q_0}$ is globally solvable and let $f \in \mathcal{J}_q$. By the definition of $\mathcal{J}_q$, we have $e^Q f \in \mathcal{K}_{q_0}$ and by the global solvability of $L_{q_0}$, there exists $u \in \Gamma'_{\{M_k\}}(G)$ such that $L_{q_0} u = e^Q f$, that is, $e^{-Q}L_{q_0}u = f$. By \eqref{normallow}, we obtain $L_q e^{-Q}u =f$.
	
\end{proof}
\begin{cor}
	If $L_q$ is globally $\Gamma_{\{M_k\}}$--hypoelliptic, then $L$ is globally $\Gamma_{\{M_k\}}$--solvable.
\end{cor}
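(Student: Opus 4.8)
The plan is to chain together two facts already established in the excerpt: the connection between $L_q$ and $L_{q_0}$ provided by Proposition~\ref{connection}, and the implication ``global hypoellipticity $\Rightarrow$ global solvability'' for the constant-coefficient perturbation $L_{q_0}$. First I would invoke Proposition~\ref{connection}: if $L_q$ is globally $\Gamma_{\{M_k\}}$--hypoelliptic, then $L_{q_0}$ is globally $\Gamma_{\{M_k\}}$--hypoelliptic. The operator $L_{q_0}=X+q_0$ is a low-order perturbation by a constant, so Theorem~\ref{pert} applies: its global $\Gamma_{\{M_k\}}$--hypoellipticity forces the set $\mathcal{N}=\{[\xi]; \lambda_m(\xi)-iq_0=0 \text{ for some }m\}$ to be finite and forces the lower bound $|\lambda_m(\xi)-iq_0|\geq C_N\exp\{-M(N\jp{\xi})\}$ for every $N>0$ whenever $\lambda_m(\xi)-iq_0\neq 0$. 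By the ``moreover'' part of Theorem~\ref{pert}, the second condition alone is equivalent to the global $\Gamma_{\{M_k\}}$--solvability of $L_{q_0}$, so $L_{q_0}$ is globally $\Gamma_{\{M_k\}}$--solvable.

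Next I would push this solvability back up to $L_q$ using the second half of Proposition~\ref{connection}, which states precisely that $L_q$ is globally $\Gamma_{\{M_k\}}$--solvable if and only if $L_{q_0}$ is. Hence $L_q$ is globally $\Gamma_{\{M_k\}}$--solvable, which is the assertion of the corollary. (Here I read the corollary's ``$L$ is globally $\Gamma_{\{M_k\}}$--solvable'' as referring to the operator $L=L_q$ with the solvability notion of Definition~\ref{definitionsolv}, consistent with the surrounding section.)

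The only genuinely delicate point is bookkeeping rather than mathematics: one must make sure that the two notions of solvability appearing in Proposition~\ref{connection} (stated with the admissibility sets $\mathcal{K}_{q_0}$ and $\mathcal{J}_q$) line up with the notion in Theorem~\ref{pert}, and that the standing hypothesis of Section~\ref{low-order-pert}---the existence of $Q\in\Gamma_{\{M_k\}}(G)$ with $XQ=q-q_0$, together with condition (M.4)---is in force so that Proposition~\ref{connection} is available. Since the corollary is placed immediately after Proposition~\ref{connection}, all of these hypotheses are automatically inherited, so the proof is essentially a one-line composition of the two stated results. I do not expect any real obstacle: the diagram \eqref{diagram} already records ``$GH\Rightarrow GS$'' at the classical level, and the present corollary is its transplant to the function-coefficient perturbed operator via the intertwining \eqref{normallow}.

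In summary, the proof reads: by Proposition~\ref{connection}, global $\Gamma_{\{M_k\}}$--hypoellipticity of $L_q$ implies the same for $L_{q_0}$; by Theorem~\ref{pert}, $L_{q_0}$ is then globally $\Gamma_{\{M_k\}}$--solvable; by Proposition~\ref{connection} again, $L_q$ is globally $\Gamma_{\{M_k\}}$--solvable. No separate estimates are needed beyond those already proved.
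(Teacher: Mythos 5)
Your argument is correct and is precisely the intended one: the paper states this corollary without proof immediately after Proposition~\ref{connection}, and the implicit justification is exactly your chain (Proposition~\ref{connection} to pass to $L_{q_0}$, the ``moreover'' part of Theorem~\ref{pert} to get solvability of $L_{q_0}$ from its hypoellipticity, and Proposition~\ref{connection} again to return to $L_q$). Your side remark about reconciling the solvability notions ($\mathcal{K}_{q_0}$ versus $\mathcal{J}_q$) is a fair observation about the paper's own notational looseness, not a gap in your proof.
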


\begin{ex}
	Let $G=\mathbb{T}^1\times \St$ and $\{M_k \}_{k \in \N_0}$ the sequence given by $M_k=(k!)^s$, with $s \geq 1$. So, the Komatsu class of Roumieu type associated to this sequence is the Gevrey space $\gamma^s(\mathbb{T}^1\times \St)$. Consider the continued fraction  $\alpha=\left[10^{1 !}, 10^{2 !}, 10^{3 !}, \ldots\right]$ and the vector field $X \in \mathfrak{s}^3$. Using rotation on $\St$, without loss of generality, we may assume that $X$ has the symbol 
	$$
	\sigma_{X}(\ell)_{mn}=im\delta_{mn}, \quad \ell \in \tfrac{1}{2}\N_0, \ -\ell\leq m,n\leq \ell, \ \ell-m, \ell-n \in \N_0,
	$$ 
	with $\delta_{mn}$ standing for the Kronecker's delta. The details about the Fourier analysis on $\St$ can be found in Chapter 11 of \cite{livropseudo}. Consider
	$$
	L_q=\partial_t + \alpha X + q(t,x),
	$$
	where $q(t,x)=\cos(t)+h(x)+\tfrac{1}{2}i$, where $h$ is expressed in Euler's angles by
		\begin{equation*}
	h(x(\phi,\theta,\psi)) = -\cos\left(\tfrac{\theta}{2}\right)\sin\left(\tfrac{\phi+\psi}{2}\right).
	\end{equation*}
Notice that $q$ is an analytic function, which implies that $q \in \gamma^s(\mathbb{T}^1\times \St)$ for all $s \geq 1$. Let $Q(t,x)=\sin(t)+\frac{1}{\alpha}\emph{\mbox{tr}}(x)$, where $\emph{\mbox{tr}}$ is the trace function given in Euler's angles by 
	$$
\emph{\mbox{tr}}(x(\phi,\theta,\psi)) = 2\cos\left(\tfrac{\theta}{2}\right)\cos\left(\tfrac{\phi+\psi}{2}\right).
$$
The vector field $X$ is the operator $\partial_\psi$ in Euler's angle and we obtain $X\emph{\mbox{tr}}(x)=h(x)$. Hence,
$$
(\partial_t+\alpha X) Q(t,x)=q(t,x)-\tfrac{1}{2}i,
$$
and by Proposition \ref{connection} the operator $L_q$ is globally $\gamma^s$--hypoelliptic if and only if the operator
$$
L_{q_0}= \partial_t + \alpha X +\tfrac{1}{2}i
$$
is globally $\gamma^2$--hypoelliptic. For the operator $L_{q_0}$, the set
$$
\mathcal{N} = \left\{(k,\ell) \in \Z \times \tfrac{1}{2}\N_0; \   k+\alpha m+\tfrac{1}{2}=0,   \mbox{ for some }  -\ell \leq m \leq \ell, \ \ell - m \in \N \right\}
$$
is empty, because $\alpha$ is an irrational number. So the condition 1 from Theorem \ref{pert} is satisfied. The condition 2 can be written as: for any $N>0$, there exists $C_N>0$ such that
\begin{equation}\label{liouexp}
\left|k+\alpha m+\tfrac{1}{2}\right| \geq C_N \exp\{-N(k+\ell+1)^{1/s}\},
\end{equation}
for all $k \in \Z$, $\ell \in \tfrac{1}{2}\N_0$, and $-\ell \leq m \leq \ell$. By Proposition 6.2 of \cite{AKM19}, we have that $\alpha$ is not an exponential Liouville number of order $s$, for any $s \geq 1$, which implies \eqref{liouexp}. Therefore $L_{q_0}$ is globally $\gamma^s$--hypoelliptic and by Proposition \ref{connection}, we conclude that $L_q$ is globally $\gamma^s$--hypoelliptic, which implies that $L_q$ is also globally $\gamma^s$--solvable.

Moreover, we have that $\alpha$ is a Liouville number, so $L_{q_0}$ is neither globally hypoelliptic nor globally solvable in the smooth sense (see Example 3.10 of \cite{KMR19b}).  By Propositions 5.8 and 5.10 of \cite{KMR19b}, we conclude that the operator $L_q$ is neither globally hypoelliptic nor globally solvable in the smooth sense. This example illustrates why we do not have the horizontal converse arrows in \eqref{diagram}. 

Consider now the operator
$$
L_{q_1}=\partial_t+\alpha X+q_1(t,x),
$$
where $q_1(t,x)=\cos(t)+h(x)+\alpha i$. Analogously to the previous example, we have
$$
(\partial_t+\alpha X) Q(t,x)=q(t,x)-\alpha i,
$$
and by Proposition \ref{connection} the operator $L_{q_1}$ is globally $\gamma^s$--hypoelliptic if and only if the operator
$$
L_{{q_1}_{0}}= \partial_t + \alpha X +\alpha i
$$
is globally $\gamma^s$--hypoelliptic. For this operator, the set
$$
\mathcal{N} = \left\{(k,\ell) \in \Z \times \tfrac{1}{2}\N_0; \   k+\alpha m+\alpha=0,   \mbox{ for some }  -\ell \leq m \leq \ell, \ \ell - m \in \N \right\}
$$
has infinitely many elements, so the operator $L_{{q_1}_{0}}$ is not globally $\gamma^s$--hypoelliptic. However, since $\alpha$ is not an exponential Liouville number of order $s$, for any $s$, we have that for all $N>0$, there exists $C_N>0$ such that
\begin{equation*}\label{liouexp2}
|k+\alpha(m+1)| \geq C_N \exp\{-N(|k|+\ell+1)^{1/s}\},
\end{equation*}
for all $k \in \Z$, $\ell \in \tfrac{1}{2}\N_0$, and $-\ell \leq m \leq \ell$. By Theorem \ref{pert} the operator $L_{{q_1}_{0}}$ is globally $\gamma^s$--solvable, which implies that $L_{q_1}$ is globally $\gamma^s$--solvable. Again, the operator $L_{q_1}$ is neither globally hypoelliptic nor globally solvable in the smooth sense.
\end{ex}

\section*{Acknowledgments}
This study was financed in part by the Coordena\c c\~ao de Aperfei\c coamento de Pessoal de N\' ivel Superior - Brasil (CAPES) - Finance Code 001. The last
 author was also supported by the FWO Odysseus grant, by the Leverhulme Grant RPG-2017-151, and by EPSRC Grant EP/R003025/1.
\bibliographystyle{abbrv}
\bibliography{biblio}

\end{document}